\documentclass{amsart} %pdflateX 日本語だめ
\usepackage{amsmath}
\usepackage{amssymb} 
\usepackage{amscd} 
\usepackage{graphicx}  %\usepackage[dvipdfmx]{graphicx}
\usepackage{epsfig,comment}
\usepackage[all]{xy} 
\usepackage{marvosym}
\usepackage{float}
\usepackage{xcolor}
\usepackage{ascmac}
\usepackage[margin=1.2in]{geometry} %横に広げる
\usepackage{hyperref}

\hypersetup{
	colorlinks=true,
	linktoc=all,
    	linkcolor={red!50!black},
   	citecolor={blue!50!black},
  	urlcolor={blue!80!black}
	} %%% This tones down the bright green hyperlink box and its other colors...

\definecolor{dg}{rgb}{0.1,0.4,0.1}
\renewcommand{\labelenumi}{ $(\arabic{enumi})$ }

\newtheorem{theorem}{Theorem}[section]
\newtheorem{lemma}[theorem]{Lemma}

\newtheorem{proposition}[theorem]{Proposition}
\newtheorem{corollary}[theorem]{Corollary}
\newtheorem{claim}[theorem]{Claim}

\newtheorem{remark}[theorem]{Remark}
\newtheorem{question}[theorem]{Question}

\theoremstyle{definition}
\newtheorem{definition}[theorem]{Definition}
\newtheorem{example}[theorem]{Example}

\newtheorem*{thm_existence_persistent_elements}{Theorem~\ref{existence_persistent_elements}}
\newtheorem*{thm_minimizer}{Theorem~\ref{minimizer}}
\newtheorem*{thm_characterization}{Theorem~\ref{characterization}}
\newtheorem*{thm_complement}{Theorem~\ref{complement}}

\numberwithin{equation}{section}
\numberwithin{figure}{section}
\numberwithin{table}{section}

\newcommand{\QED}[1]{\hspace*{\fill} $\square$(#1)\newline}
\renewcommand{\(}{\textup{(}}
\renewcommand{\)}{\textup{)}}

\begin{document}
\baselineskip 13pt
%%%%%%%%%%%%

\title[Dehn filling and the knot group]{Dehn filling and the knot group II: \\Ubiquity of persistent elements}

\author[T.Ito]{Tetsuya Ito}
\address{Department of Mathematics, Kyoto University, Kyoto 606-8502, JAPAN}
\email{tetitoh@math.kyoto-u.ac.jp}
%\thanks{The first named author has been partially supported by JSPS KAKENHI Grant Number JP15K17540 and JP16H02145.}

\author[K. Motegi]{Kimihiko Motegi}
\address{Department of Mathematics, Nihon University, 
3-25-40 Sakurajosui, Setagaya-ku, 
Tokyo 156--8550, Japan}
\email{motegi.kimihiko@nihon-u.ac.jp}
%\thanks{The second named author has been partially supported by JSPS KAKENHI Grant Number JP19K03502, 21H04428 and Joint Research Grant of Institute of Natural Sciences at Nihon University for 2022. }

\author[M. Teragaito]{Masakazu Teragaito}
\address{Department of Mathematics Education, Hiroshima University, 
1-1-1 Kagamiyama, Higashi-Hiroshima, 739--8524, Japan}
\email{teragai@hiroshima-u.ac.jp}
%\thanks{The third named author has been partially supported by JSPS KAKENHI Grant Number JP20K03587.}

\subjclass[2020]{Primary: 57M05, Secondary: 57K10; 57K30; 57M07; 20F65}
\keywords{Dehn filling, knot group, Property P,  persistent element}
\dedicatory{}

\begin{abstract}   
Let $K$ be a nontrivial knot in $S^3$. 
We say that an element of the knot group $G(K)$ is \textit{persistent} if  it remains nontrivial under all nontrivial Dehn fillings. 
Such elements exist for every nontrivial knot. 
Indeed, Property P is equivalent to the statement that the meridian of $K$ is a persistent element, and this represents the first instance of such elements.  
Building on the solution to the Property P conjecture due to Kronheimer and Mrowka, we show that every nontrivial knot group admits infinitely many persistent elements with pairwise disjoint automorphic orbits, none of which contains a power of the meridian. 
We then develop this further to show that for a broad class of hyperbolic knots -- namely those admitting no surgery whose resulting manifold has torsion in its fundamental group -- persistent elements are not rare curiosities, but rather structurally pervasive in $G(K)$. 
This is reflected in the following two properties: 

(i) Every subgroup of $G(K)$ that is not contained in the normal closure of a peripheral element contains persistent elements.

(ii) Persistent elements exist outside every proper subgroup of $G(K)$.
\end{abstract}

\maketitle
\tableofcontents
%%%%%%%%%%%%%%%%%%%%%%%%%%%%%
\section{Introduction}
\label{Introduction}

\subsection{Background}
\label{subsection:background}

We have explored the algebraic effects of Dehn filling, focusing on how elements of the knot group behave under Dehn fillings \cite{IMT_Magnus,
IMT_realization,IMT_MSJ_KMS}. 
To be precise, we have introduced a set-valued function 
\[
\mathcal{S} \colon G(K)  \to 2 ^{\mathbb{Q}} \ \textrm{as}
\]
\[
\mathcal{S}(g) = \{ r \in \mathbb{Q} \mid g \  \textrm{becomes trivial after $r$--Dehn filling} \} \subset \mathbb{Q}.
\]

Let us collect some known properties of this function. 

\begin{itemize}
\item[-] \textbf{Conjugate invariance} --\ $\mathcal{S}$ is a class function, i.e. $\mathcal{S}(aga^{-1}) =\mathcal{S}(g)$ for any elements $a$ and $g$.

\medskip

\item[-] \textbf{Finiteness property } --\ $\mathcal{S}(g)$ is finite for all nontrivial elements $g \in G(K)$ for hyperbolic knots (Groves-Manning \cite{GM}, Osin \cite{Osin}). 
More precisely, $\mathcal{S}(g)$ is finite for all nontrivial elements $g \in G(K)$ if and only if $K$ is a prime, non-cabled knot \cite{IMT_residually_perfect}. 

\medskip

\item[-] \textbf{Injectivity of universal Dehn filling homomorphism} --\ $\mathcal{S}(g) = \mathbb{Q}$ if and only if $g$ is the identity element, in other words, the universal Dehn filling homomorphism
\[
\Phi \colon G(K) \to \prod_{r \in \mathbb{Q}} \pi_1(K(r))
\]
is injective \cite{IMT_residually_perfect}. 

\medskip

\item[-] \textbf{Realization property} --\ If $K$ is a hyperbolic knot without Seifert surgery or reducing surgery, then for any given finite subset $\mathcal{R} \subset \mathbb{Q}$, 
we have an element $g \in [G(K), G(K)]$ such that $\mathcal{S}(g) = \mathcal{R}$  \cite{IMT_realization}.  
This property reflects an algebraic independence of Dehn fillings.  
\end{itemize}

The size of $\mathcal{S}(g)$ is, by definition, a measure of the strength of $g$ against Dehn fillings. 
It is very hard to explicitly determine $\mathcal{S}(g)$ for a given nontrivial element $g \in G(K)$. 
Following Realization property, $\mathcal{S}(g)$ can attain a given finite subset even when its cardinal is huge. 
On the contrary,  
in this article, we focus upon elements $g$ with $\mathcal{S}(g) = \emptyset$, i.e. elements unaffected by any nontrivial Dehn filling. 

\subsection{Motivation}
\label{subsection:motivation}

The Property P conjecture, 
proposed by Bing and Martin \cite{BingMartin},  
asserts that the result $K(r)$ of $r$--Dehn surgery on a nontrivial knot $K$ is never simply connected for all $r \in \mathbb{Q}$. 
Since torus knots were known to satisfy the Property P \cite{Mos},  
the Cyclic Surgery Theorem \cite{CGLS} reduces the conjecture to the specific case of $r = \pm 1$. 
Kronheimer and Mrowka \cite{KM} prove an existence of a non-abelian $\mathrm{SU}(2)$--representation of 
$\pi_1(K(1))$ for any nontrivial knot, which settles the Property P conjecture in the positive. 
They then strengthen this result from gauge theoretic point of view, 
proving $\pi_1(K(r))$ admits such a representation for any rational number $r \in [0, 2]$. 
Baldwin-Sivek \cite{BS} and Baldwin-Li-Sivek-Ye \cite{BLSY} prove $\pi_1(K(4))$ and $\pi_1(K(3))$ also admit $\mathrm{SU}(2)$--non-abelian representations. 
These result concern how far is $\pi_1(K(r))$ compared to the trivial group.

This article is also motivated by Property P in a different perspective based upon the following reformulation of Property P.  
Since the meridian normally generates the knot group,
if it becomes trivial in $\pi_1(K(r))$, then  $\pi_1(K(r)) = \{ 1 \}$.  
This immediately enables us to restate Property P. 

\begin{theorem}[Reformulation of Property P  \cite{KM}]
\label{PropertyP_rephrase}
For any nontrivial knot, its meridian $\mu$ survives under all nontrivial Dehn fillings, i.e. $\mathcal{S}(\mu) = \emptyset$. 
\end{theorem}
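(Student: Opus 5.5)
The statement is a direct translation of Property P, as proved by Kronheimer--Mrowka \cite{KM}, into the language of the function $\mathcal{S}$. The plan is to show that $r \in \mathcal{S}(\mu)$ forces $\pi_1(K(r))$ to be trivial, and then to rule this out for every $r \in \mathbb{Q}$.

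\textbf{Step 1 (normal generation).} By the Wirtinger presentation, $G(K)$ is generated by conjugates of the meridian $\mu$. Hence $G(K)$ is the normal closure $\langle\langle \mu \rangle\rangle$.

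\textbf{Step 2 (triviality under filling).} The $r$--Dehn filling gives a surjection $p_r \colon G(K) \to \pi_1(K(r))$; for $r=p/q$ its kernel is the normal closure of the slope element $\mu^p\lambda^q$. If $p_r(\mu)=1$, then every conjugate of $\mu$ also maps to $1$. By Step 1 this gives $\pi_1(K(r)) = p_r(G(K)) = \{1\}$. So $r \in \mathcal{S}(\mu)$ implies that $K(r)$ is a closed, orientable, simply connected $3$--manifold.

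\textbf{Step 3 (excluding such slopes).} It remains to show that no $r \in \mathbb{Q}$ gives a simply connected $K(r)$ when $K$ is nontrivial. Here the slope $\infty$ is excluded by convention, since $\mathcal{S}$ takes values in $\mathbb{Q}$.

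First, $H_1(K(p/q)) \cong \mathbb{Z}/|p|$. Triviality of $\pi_1$ therefore forces $|p|=1$, so $r = 1/n$ with $n \ne 0$. The case $r=0$ is excluded because $H_1(K(0))=\mathbb{Z}$.

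Next, $\pi_1(K(1/n))$ is then a perfect quotient of $G(K)$. Two ways to exclude it:
\begin{itemize}
\item The Cyclic Surgery Theorem \cite{CGLS}, combined with Moser's classification for torus knots \cite{Mos}, reduces the problem to $r=\pm1$. Kronheimer--Mrowka \cite{KM} show that $\pi_1(K(\pm1))$ has a non-abelian $\mathrm{SU}(2)$--representation; the case $-1$ follows by passing to the mirror image. Hence $\pi_1(K(\pm1)) \neq 1$.
\item Alternatively, one can quote \cite{KM} directly in its full form: Property P holds for all nontrivial slopes.
\end{itemize}

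\textbf{Conclusion.} Steps 2 and 3 together give $\mathcal{S}(\mu)=\emptyset$. The only nontrivial input is the gauge-theoretic theorem of \cite{KM}, which we cite rather than reprove; everything else is elementary.
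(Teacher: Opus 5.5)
Your proposal is correct and is essentially the paper's argument: since $\mu$ normally generates $G(K)$, $r \in \mathcal{S}(\mu)$ forces $\pi_1(K(r)) = \{1\}$, and Property P rules this out (Kronheimer--Mrowka \cite{KM}, after the reduction to $r = \pm 1$ via \cite{CGLS} and \cite{Mos}). Your preliminary homological reduction to slopes $r = 1/n$ is a harmless extra step that the paper's argument does not need.
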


Motivated by this, we introduce the notion of persistence: a nontrivial element of the knot group is said to be  {\em persistent} if it survives every nontrivial Dehn filling.
The persistent elements form a “rigid core” of the knot group, independent of all nontrivial Dehn fillings and the peripheral relations they induce. 
Our goal is to understand the structure of this set. This leads to the following fundamental question.

\begin{question}
Are persistent elements rare or abundant in a knot group?
\end{question}
 
The existence of many persistent elements reveals a previously underappreciated form of deep group-theoretic rigidity inherent in knot groups
-- one that is invisible at the level of any individual Dehn fillings but emerges from their collective behavior.

\medskip

\subsection{Results}
\label{subsection:results}

Since any automorphic image of the meridian also normally generates the knot group, it is persistent as well. 
We say that two elements are \textit{equivalent} if one is the automorphic image of the other.  
Given an element $g \in G(K)$, 
by the {\em automorphic orbit} of $g$ we mean the set of all automorphic images of $g$, 
i.e., the set $\{ \varphi(g)\}$ where $\varphi$ runs over all automorphisms of $G(K)$. 

\begin{theorem}[non-meridional persistent elements]
\label{existence_persistent_elements}
For every nontrivial knot, 
its group contains infinitely many persistent elements with disjoint automorphic orbits, 
none of which contains a power of the meridian.
\end{theorem}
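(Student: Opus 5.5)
The plan is to produce explicit elements whose persistence follows from two inputs. For slopes $p/q$ with $|p|\ge 2$ the input is homology. For the integral homology spheres $K(1/q)$ it is the Kronheimer--Mrowka theorem in its representation-theoretic form, namely the existence of a non-abelian $\mathrm{SU}(2)$--representation of $\pi_1(K(r))$ for $r\in[0,2]$, applied to $K$ or its mirror so that every $1/q$ is covered. Fix a non-peripheral element $h\in G(K)$ and, for $n\ge 1$, set
\[
g_n=\mu^{n+1}\,h\,\mu^{-n}\,h^{-1}.
\]
The abelianization of $g_n$ is $1$, i.e.\ the image of $\mu$ in $H_1=\mathbb{Z}$. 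In $K(p/q)$ we have $H_1=\mathbb{Z}/p$, generated by the image of $\mu$. So if $g_n$ dies in $\pi_1(K(p/q))$, then $p=\pm1$.

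For $p=\pm1$, let $\Gamma_q=\pi_1(K(1/q))$. Vanishing of $g_n$ in $\Gamma_q$ means $\bar\mu^{\,n+1}=\bar h\bar\mu^{\,n}\bar h^{-1}$. Take a non-abelian $\rho\colon\Gamma_q\to\mathrm{SU}(2)$ and write $\rho(\bar\mu)$ as conjugate to $\mathrm{diag}(e^{i\theta},e^{-i\theta})$. Conjugate elements have equal trace, so $(n+1)\theta\equiv\pm n\theta \pmod{2\pi}$. Hence either $\theta\equiv0$, or $(2n+1)\theta\equiv0$. The first is impossible: $\bar\mu$ normally generates $\Gamma_q$, so $\rho(\bar\mu)=\pm1$ would force $\rho$ to be abelian. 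Therefore $\rho(\bar\mu)$ has order dividing $2(2n+1)$.

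This leaves the main obstacle: choosing $n$, uniformly in $q$, so that some non-abelian $\rho$ of $\Gamma_q$ has $\rho(\bar\mu)$ of order not dividing $2(2n+1)$. What I would try first is to use the refined Kronheimer--Mrowka output, in which the representations come from a path of representations of the knot group with prescribed meridional trace. This should give, for each $q$, a representation with $\rho(\mu)$ of order $4$, or at least not of odd-times-two order. That excludes every $n$ at once, since $4\nmid 2(2n+1)$.

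If that refinement is unavailable, I would replace $g_n$ by a product of two such relators, $g=\mu^{a}h\mu^{b}h^{-1}\cdot k\mu^{c}k^{-1}\mu^{d}$ with total exponent $1$. I would then arrange that any forced finite order for $\rho(\bar\mu)$ is incompatible between the two trace equations, for instance orders dividing coprime odd numbers. Either way, the triviality of the image forces $\rho(\bar\mu)=\pm1$, contradicting non-abelianness. Hence $\mathcal{S}(g_n)=\emptyset$.

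Next I would check that no $g_n$ lies in the automorphic orbit of a power $\mu^m$. Automorphisms of $G(K)$ preserve the abelianization up to sign, so only $m=\pm1$ is possible. But $\mu^{\pm1}$ lies in a peripheral subgroup, and automorphisms of knot groups send peripheral subgroups to conjugates of peripheral subgroups; for composite knots the same holds up to the finitely many swallow-follow ambiguities, by Waldhausen/JSJ rigidity. So it suffices to choose $h$ such that $g_n$ is not conjugate into any peripheral subgroup. This is arranged by taking $h$ far from the peripheral subgroup, e.g.\ detected in a finite quotient where the image of $g_n$ is not conjugate to any image of $\mu^{\pm1}$.

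Finally, to get infinitely many pairwise disjoint orbits I would use an orbit invariant. For each finite group $F$, the set $\{\psi(g)\mid\psi\in\mathrm{Hom}(G(K),F)\}$, taken modulo $\mathrm{Aut}(F)$, is an invariant of the automorphic orbit of $g$. Choosing a finite quotient (for instance a dihedral or $\mathrm{PSL}_2(\mathbb{F}_\ell)$ quotient) in which $\mu$ and $h$ have images of controlled order, the orders of the images of the $g_n$ should take infinitely many distinct values as $n$ varies along a suitable arithmetic progression. This separates infinitely many orbits.
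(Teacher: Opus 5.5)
Your elements $g_n=\mu^{n+1}h\mu^{-n}h^{-1}$ are Baumslag--Solitar relators of exactly the kind the paper uses (its $\mu^{1-m}y\mu^{m}y^{-1}$ with $m=-n$). Your homology argument correctly handles every slope $p/q$ with $|p|\neq 1$. The genuine gap is at the slopes $1/q$, which you flag but do not close.

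Your first fix asks more of Kronheimer--Mrowka than they give. Their argument produces a point of the image of $R(K)$ in the pillowcase on the line $\alpha+q\beta\equiv 0$, with no control on the meridional angle $\alpha$. For the trefoil, every irreducible $\rho$ with $\rho(\mu)$ traceless has $\rho(\lambda)=\rho(x^2)\rho(\mu)^{-6}=(-I)(-I)=I$. So no order-$4$ representation descends to any $K(1/q)$, and at the Poincar\'e sphere slope $\bar\mu$ has order $10=2\cdot 5$.

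Your second fix is invalid. If $\mu^{a}h\mu^{b}h^{-1}\cdot k\mu^{c}k^{-1}\mu^{d}$ dies, you get only the single relation $\mu^{a}h\mu^{b}h^{-1}=\mu^{-d}k\mu^{-c}k^{-1}$, not two conjugacy relations, so there are no two trace equations to play off.

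The missing idea is Shalen's theorem (Proposition~\ref{BS_Shalen}). In $\pi_1(K(1/q))$ the relation $\bar\mu^{\,n+1}=\bar h\bar\mu^{\,n}\bar h^{-1}$ forces $\bar\mu$ to have finite order (if $\bar h=1$, the image of $g_n$ is $\bar\mu\neq 1$ directly). Then $1/q$ is a torsion surgery, hence a finite one by \cite[Proposition~3.1]{IMT_realization}, with $H_1=0$. Lemma~\ref{spherical_HS3} then forces $K$ to be the trefoil and $K(1/q)$ the Poincar\'e sphere. In that single case your trace computation does finish, once you use its sharp form. In $\mathrm{SU}(2)$ it gives $\rho(\bar\mu)^{2n+1}=I$, not merely order dividing $2(2n+1)$, and this is impossible for an element of order $10$. (The paper sidesteps the trefoil by using pseudo-meridians.)

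The orbit-separation step also fails as written. For a fixed finite group $F$, the invariant $\{\psi(g)\mid \psi\in\mathrm{Hom}(G(K),F)\}$ is a subset of $F$. It therefore takes only finitely many values and cannot separate infinitely many orbits. The order of $\psi(g_n)$ for one chosen $\psi$ is not an orbit invariant at all. Dihedral quotients cannot even separate $g_n$ from $\mu$: for any $\psi$ to a dihedral group, $\psi(g_n)$ equals $\psi(\mu)$ or $\psi(h\mu h^{-1})$, according to the parity of $n$.

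Your reduction via peripheral subgroups is also unsafe. Automorphisms of composite knot groups need not preserve the peripheral subgroup up to conjugacy: the automorphism in Proposition~\ref{persistent_auto} sends $\lambda$ to $\lambda_1^{-1}\lambda_2$.

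The paper proceeds case by case.
\begin{itemize}
\item For hyperbolic knots, and inside the hyperbolic outermost JSJ piece of a prime non-cabled satellite, it uses the holonomy trace of Lemma~\ref{non-conjugate}. For your elements this is $2+c^2n(n+1)\tau^2$ with $c\neq 0$. It shows the elements are pairwise non-conjugate and non-peripheral, in particular not conjugate to $\mu^{\pm1}$. Finiteness of $\mathrm{Out}$ (Mostow--Prasad, with Tsau's theorem to pass to $G(K)$) upgrades non-conjugacy to non-equivalence.
\item Cable knots, including torus knots, use the pseudo-meridians of Silver--Whitten--Williams and Dutra.
\item Composite knots use $\mu_k\lambda_k^p$ in a prime factor group. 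These are persistent because $\partial E(k_i)$ stays incompressible in every $K(r)$, and Johannson's theorem controls the automorphisms.
\end{itemize}
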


We used this result in \cite{IMT_cyclic} to show that the knot group admits a cyclic subgroup whose nontrivial elements are persistent if and only if the knot has no finite surgery. 

\begin{remark}
\label{auto_persistent}\
\begin{enumerate}
\item
As we mentioned above, for any inner automorphism $\psi$, 
$\mathcal{S}(\psi(g)) = \mathcal{S}(g)$. 
\item
If $K$ is prime, 
then any automorphism of $G(K)$ is induced by a homeomorphism of $E(K)$ up to conjugation \cite{Tsau2}.
Hence, for equivalent elements $g,\ g' \in G(K)$, 
$\mathcal{S}(g)= \mathcal{S}(g')$ if $K$ is not amphicheiral, 
and $\mathcal{S}(g)= \pm \mathcal{S}(g')$ if $K$ is amphicheiral.
In particular, an element equivalent to a persistent element is persistent. 
\item
On the other hand, if $K$ is not prime, 
it may happen that $\mathcal{S}(g) \neq \pm \mathcal{S}(g')$ even when $g$ and $g'$ are equivalent. 
Actually, a persistent element $g$ may be equivalent to a non-persistent element  $g'$ in $G(K)$, 
i.e.  $\mathcal{S}(g)=  \emptyset$, while $\mathcal{S}(g') \ne \emptyset$; 
see Proposition~\ref{persistent_auto}
\end{enumerate}
\end{remark}

\medskip

Developing Theorem~\ref{existence_persistent_elements}  further, 
we show that persistent elements are ubiquitous among subgroups of the knot group.
The theorem applies to a broad class of hyperbolic knots, namely those admitting no surgery whose resulting manifold has torsion in its fundamental group.

\begin{definition}
\label{D}
Let $K$ be a nontrivial knot in $S^3$. 
To each subgroup $H \subset G(K)$, 
we define 
\[
\mathcal{S}_{\cap}(H) = \bigcap_{h \in H} \mathcal{S}(h) \subset \mathbb{Q}. 
\]
\end{definition}

If the result $K(r)$ has the fundamental group with nontrivial torsion, then we call such a surgery a \textit{torsion surgery}. 
It is known that a torsion surgery is either a finite surgery or a reducing surgery.  
In what follows, we focus on hyperbolic knots that admit no torsion surgeries; we call such knots {\em torsion-free hyperbolic knots} for simplicity.

For a given subgroup $H$ of $G(K)$, 
we call an element $h_0 \in H$ a \textit{minimizer} of $H$ if $\mathcal{S}(h_0) = \mathcal{S}_{\cap}(H)$ holds. 

Using quantitative information
provided by stable commutator length, together with geometric control arising from hyperbolic structures on knot complements and their Dehn fillings, 
we may prove: 

\begin{theorem}[Existence of minimizer]
\label{minimizer}
Let $K$ be a torsion-free hyperbolic knot.  
Then every nontrivial subgroup $H$ of $G(K)$ has a minimizer. 
\end{theorem}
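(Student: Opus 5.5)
We need an element $h_0\in H$ with $\mathcal{S}(h_0)=\mathcal{S}_\cap(H)$. Since $H$ is nontrivial, pick any $1\neq h_1\in H$. By the Finiteness property, $\mathcal{S}(h_1)$ is a finite set, so $\mathcal{S}_\cap(H)\subset \mathcal{S}(h_1)$ is finite. The plan is to remove the finitely many slopes of $\mathcal{S}(h_1)\setminus\mathcal{S}_\cap(H)$ one at a time, keeping all slopes of $\mathcal{S}_\cap(H)$ automatically, since every element of $H$ dies there. It therefore suffices to prove the following combination lemma.

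\emph{Combination lemma.} Let $g\in H$ be nontrivial and let $r\in\mathcal{S}(g)\setminus\mathcal{S}_\cap(H)$. Then there is $g'\in H$ with $\mathcal{S}(g')\subset\mathcal{S}(g)\setminus\{r\}$.

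Iterating this lemma at most $|\mathcal{S}(h_1)|$ times gives a minimizer. To prove it, choose $k\in H$ with $k\notin\ker(G(K)\to\pi_1(K(r)))$. Then try $g'=g^{n}k^{m}$, or a commutator-type word such as $g^n k g^{-n}k^{-1}$, if one needs to stay in the commutator subgroup, for large $n,m$. At the slope $r$, $g'$ maps to the image of $k^m$ (or a conjugate expression). Since $K$ is torsion-free, $\pi_1(K(r))$ is torsion-free, so $k^m\neq 1$ in $\pi_1(K(r))$ for all $m\neq0$. Hence $r\notin\mathcal{S}(g')$.

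The real work is to prevent new slopes from entering $\mathcal{S}(g')$, i.e.\ to ensure that $g'$ is nontrivial in $\pi_1(K(s))$ for every $s\notin\mathcal{S}(g)$. There are three regimes.

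First, for the finitely many exceptional (non-hyperbolic) slopes $s$: $K(s)$ is irreducible with torsion-free $\pi_1$ by hypothesis, so it is Seifert fibered or toroidal with torsion-free, in particular locally indicable or residually finite, fundamental group. There, a product $g^nk^m$ of two elements that are not both trivial is nontrivial for all but finitely many pairs $(n,m)$ in suitable directions. Commuting pairs can be handled using the centralizer structure, which is abelian or Klein-bottle-like.

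Second, for hyperbolic fillings with bounded length of the core geodesic, again finitely many slopes, the same argument applies using the torsion-free hyperbolic group $\pi_1(K(s))$. In such a group, if $g,k$ do not commute, $g^nk^m\ne1$ for large $n,m$. If they commute they lie in a common cyclic group, and one chooses exponents avoiding the finitely many bad relations.

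Third, for the infinitely many remaining slopes, use quantitative control. By Groves--Manning/Osin type results, or the drilling theorem of Hodgson--Kerckhoff, long fillings are uniformly close to $E(K)$. Stable commutator length then gives a uniform lower bound $\mathrm{scl}_{\pi_1(K(s))}(\bar{g'})\ge c>0$, valid for all sufficiently long $s$ and in particular implying $g'\neq 1$. This uses Calegari's estimates of scl in hyperbolic manifolds via the translation length, and it is why one prefers $g'$ in $[G(K),G(K)]$. Alternatively, apply the injectivity radius bound of the filled manifold on balls of fixed radius in the thick part.

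I expect the main obstacle to be the uniformity in the third regime. The choice of $n,m$ must work simultaneously for all long slopes, while the threshold for ``long'' must not itself depend on $n,m$. The approach I would try first is to fix the word shape, use scl (which is homogeneous: $\mathrm{scl}(w^N)=N\,\mathrm{scl}(w)$) together with uniform geometric convergence to bound things independently of the exponents, and only then choose the exponents to handle the finitely many remaining slopes.
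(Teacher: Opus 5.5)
Your skeleton matches the paper's: remove the finitely many slopes of $\mathcal{S}(h_1)\setminus\mathcal{S}_{\cap}(H)$ one at a time, using torsion-freeness at the non-hyperbolic slopes and stable commutator length at the hyperbolic ones. But the step that carries all the content is left open: uniformity over the infinitely many hyperbolic slopes. As you have set it up, it cannot be closed, because two hypotheses the paper builds in are missing.

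The first is \emph{non-peripherality} of the element raised to a large power. The uniform bound $\mathrm{scl}_{\pi_1(K(s))}(p_s(g))>\delta_g$ for every hyperbolic $s\notin\mathcal{S}(g)$ comes from Calegari's length inequality together with Thurston's hyperbolic Dehn surgery theorem. It needs the geodesic length of $p_s(g)$ to stay bounded below, and this fails for peripheral $g$: then $p_s(g)$ is a power of the core, whose length tends to $0$. Without it your construction fails outright. Take $H=P(K)$, $g=\lambda$, $r=0$, $k=\mu$. For all $n,m\neq 0$, $g^nk^m=\mu^m\lambda^n$ is a power of the slope element of slope $m/n$, so $\mathcal{S}(g^nk^m)=\{m/n\}\not\subset\mathcal{S}(g)\setminus\{0\}=\emptyset$. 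This is why the paper proceeds as follows:
\begin{enumerate}
\item It first disposes of subgroups conjugate into $P(K)$: a cyclic one is minimized by its generator, and a non-cyclic one contains a nonzero power of $\mu$.
\item For the remaining $H$, it manufactures a non-peripheral $g_1\in H$ via the trace computation for $x^{1-m}yx^my^{-1}$ (Lemma~\ref{non-conjugate}).
\item It always puts the large exponent on that non-peripheral element, and uses Lemma~\ref{non-peripheral_constant} to keep $g_1^{m}g_2$ non-peripheral so the iteration can continue.
\end{enumerate}

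The second missing hypothesis is that the other factor be \emph{fixed} and lie in $[G(K),G(K)]$. The estimate is
\[
\mathrm{scl}(p_s(g^mh)) \ge m\,\mathrm{scl}(p_s(g))-\mathrm{scl}(p_s(h))-\tfrac12 > m\,\delta_g-\mathrm{scl}_{G(K)}(h)-\tfrac12 .
\]
Here monotonicity bounds $\mathrm{scl}(p_s(h))$ by $\mathrm{scl}_{G(K)}(h)$, which is finite only for $h\in[G(K),G(K)]$. The slope $s=0$ with $p_0(g)\notin[\pi_1(K(0)),\pi_1(K(0))]$ needs a separate homological argument.

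Your proposed way into the commutator subgroup, $g^nkg^{-n}k^{-1}$, maps to $1$ in $\pi_1(K(r))$ because $p_r(g)=1$. So it kills exactly the slope you are trying to remove. The paper instead chooses $g_1$ with $\langle p_{\mathrm{abel}}(g_1)\rangle=p_{\mathrm{abel}}(H)$ and replaces $k$ by $g_1^jk\in[G(K),G(K)]$, which leaves its image unchanged at every slope of $\mathcal{S}(g_1)$.

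With $h$ fixed and only the exponent of $g$ varying, the non-hyperbolic slopes become trivial: two bad exponents $m_1\neq m_2$ would give $p_s(g)^{m_1-m_2}=1$. Your worry about a long-slope threshold depending on the exponents also disappears, since $\delta_g$ is uniform over all hyperbolic slopes.

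Alternatively, once $g$ is non-peripheral, your regime-two observation can be made uniform with translation lengths. In a closed hyperbolic $K(s)$, $\bar g^{\,n}\bar k^{\,m}=1$ forces $\bar g,\bar k$ into a common cyclic subgroup, so $|n|\,\ell_s(\bar g)=|m|\,\ell_s(\bar k)$. Here $\ell_s(\bar g)$ is bounded below and $\ell_s(\bar k)$ bounded above uniformly in $s$. This avoids the commutator condition, but not non-peripherality.
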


We apply Theorem~\ref{minimizer} to obtain a complete characterization of subgroups that contain persistent elements. 
Obviously,  not every subgroup of $G(K)$ contains a persistent element. 
Let $\gamma$ be a \textit{slope element} represented by a  simple closed curve on $\partial E(K)$ of slope $r \in \mathbb{Q}$. 
Then its normal closure is denoted by $\langle\!\langle\gamma \rangle\!\rangle$.  
(Both $\gamma$ and $\gamma^{-1}$ correspond to the slope $r$, 
and $\langle\!\langle\gamma \rangle\!\rangle = \langle\!\langle\gamma^{-1} \rangle\!\rangle$, 
so it is reasonable to denote it by $\langle\!\langle r \rangle\!\rangle$.)
Then each element in $\langle\!\langle r \rangle\!\rangle$ becomes trivial after $r$--Dehn filling, 
so every subgroup of $\langle\!\langle r \rangle\!\rangle$ never contain persistent elements for all $r \in \mathbb{Q}$.

\begin{theorem}
\label{characterization}
Let $K$ be a torsion-free hyperbolic knot, 
and $H$ a subgroup of $G(K)$. 
Then the following three conditions are equivalent. 

\begin{enumerate}
\item 
$H$ contains a persistent element. 
\item
$H$ is not contained in $\langle\!\langle r \rangle\!\rangle$ for any $r \in \mathbb{Q}$.  
\item 
$\mathcal{S}_{\cap}(H) = \emptyset$.
\end{enumerate}
\end{theorem}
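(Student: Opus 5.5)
The plan is to prove the cycle of implications $(1)\Rightarrow(2)\Rightarrow(3)\Rightarrow(1)$, putting all of the real work into Theorem~\ref{minimizer}.

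\textbf{$(1)\Rightarrow(2)$.} Suppose $h\in H$ is persistent and, for contradiction, $H\subset\langle\!\langle r\rangle\!\rangle$ for some $r\in\mathbb{Q}$. Then $h$ lies in $\langle\!\langle r\rangle\!\rangle$, which is exactly the kernel of $G(K)\to\pi_1(K(r))$. Hence $r\in\mathcal{S}(h)=\emptyset$, a contradiction. Here $r$ is a nontrivial slope, since $\mathbb{Q}$ excludes $1/0$.

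\textbf{$(3)\Rightarrow(1)$.} Assume $\mathcal{S}_\cap(H)=\emptyset$. Then $H$ is nontrivial, because $\mathcal{S}(1)=\mathbb{Q}$. Theorem~\ref{minimizer} provides a minimizer $h_0\in H$, so $\mathcal{S}(h_0)=\mathcal{S}_\cap(H)=\emptyset$. Also $h_0\neq 1$, since $\mathcal{S}(1)\neq\emptyset$. Thus $h_0$ is a persistent element of $H$.

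\textbf{$(2)\Rightarrow(3)$.} We argue by contraposition. Suppose some $r\in\mathcal{S}_\cap(H)$. Then every $h\in H$ dies in $\pi_1(K(r))$, i.e.\ $h\in\ker(G(K)\to\pi_1(K(r)))$. By van Kampen, this kernel is $\langle\!\langle r\rangle\!\rangle$. Therefore $H\subset\langle\!\langle r\rangle\!\rangle$, and (2) fails.

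\textbf{Expected obstacle.} All three implications are formal once Theorem~\ref{minimizer} is available. The only substantive point is that $\mathcal{S}_\cap(H)=\emptyset$ must be realized by a single element of $H$. This would fail for an infinite intersection without the minimizer theorem; the torsion-free hyperbolic hypothesis enters only through that theorem.
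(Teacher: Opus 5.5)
Your proposal is correct and is essentially the paper's proof: the cycle $(1)\Rightarrow(2)\Rightarrow(3)\Rightarrow(1)$, where the first two implications are formal because $\langle\!\langle r\rangle\!\rangle=\ker p_r$, and the last follows from Theorem~\ref{minimizer}. You also observe that $\mathcal{S}_{\cap}(H)=\emptyset$ forces $H\neq\{1\}$, so Theorem~\ref{minimizer} applies and $h_0\neq 1$; the paper leaves this implicit.
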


Thus subgroups which obviously do not admit a persistent element are the only subgroups containing no persistent elements. 

Theorem~\ref{characterization} has several applications. 
If $H \subset \langle\!\langle r \rangle\!\rangle$, then $H$ vanishes in $\pi_1(K(r))$, and vice versa. 
So $(1) \Leftrightarrow (2)$ implies:

\begin{corollary}
Let $K$ be a torsion-free hyperbolic knot. 
For any subgroup $H$ of $G(K)$, we have the following alternative: 
\begin{itemize}
\item
$H$ totally vanishes for some nontrivial Dehn filling, or 
\item
$H$ admits a persistent element.
\end{itemize}
\end{corollary}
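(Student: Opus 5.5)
The statement to prove is the Corollary: for a torsion-free hyperbolic knot $K$ and any subgroup $H$ of $G(K)$, either $H$ vanishes in $\pi_1(K(r))$ for some $r\in\mathbb{Q}$, or $H$ contains a persistent element. I would deduce it directly from Theorem~\ref{characterization}, specifically from the equivalence $(1)\Leftrightarrow(2)$. The only new ingredient is to translate ``$H \subset \langle\!\langle r \rangle\!\rangle$'' into ``$H$ totally vanishes under $r$--Dehn filling''.

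\textbf{Step 1: the translation.} By van Kampen, $\pi_1(K(r)) = G(K)/\langle\!\langle \gamma \rangle\!\rangle$, where $\gamma$ is a slope element of slope $r$. So the kernel of the filling map $G(K)\to \pi_1(K(r))$ is exactly $\langle\!\langle r \rangle\!\rangle$. Hence the image of $H$ in $\pi_1(K(r))$ is trivial if and only if $H\subset \langle\!\langle r \rangle\!\rangle$.

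\textbf{Step 2: the dichotomy.} Fix a subgroup $H$. There are two cases.
\begin{itemize}
\item Suppose $H\subset\langle\!\langle r\rangle\!\rangle$ for some $r\in\mathbb{Q}$. Then by Step 1, $H$ totally vanishes under the nontrivial filling $K(r)$.
\item Otherwise, condition (2) of Theorem~\ref{characterization} holds. Since $K$ is torsion-free hyperbolic, the implication $(2)\Rightarrow(1)$ gives a persistent element in $H$.
\end{itemize}
This establishes the alternative.

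\textbf{Step 3: exclusivity.} The two alternatives are mutually exclusive. If $H$ vanishes in $\pi_1(K(r))$, then every $h\in H$ has $r\in\mathcal{S}(h)$, so no element of $H$ is persistent.

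There is no real obstacle here. All the depth lies in $(2)\Rightarrow(1)$ of Theorem~\ref{characterization}, which rests on Theorem~\ref{minimizer}. The only point needing care is that the kernel of the filling map is the full normal closure of the slope element, not merely the subgroup it generates.
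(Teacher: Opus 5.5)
Your proposal is correct and follows the paper's own argument. The paper likewise notes that $H \subset \langle\!\langle r \rangle\!\rangle$ exactly when $H$ vanishes in $\pi_1(K(r))$, and then reads the alternative off the equivalence $(1)\Leftrightarrow(2)$ of Theorem~\ref{characterization}; your exclusivity remark in Step 3 is just the easy direction $(1)\Rightarrow(2)$.
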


As an application of Theorem~\ref{characterization}, we have: 

\begin{example}[Seifert surface subgroup]
\label{Seifert_surface}
Let $K$ be a torsion-free hyperbolic knot. 
Let $S \subset E(K)$ be an incompressible Seifert surface of $K$. 
Take $\lambda \in \pi_1(\partial S)$ and any nontrivial element $\alpha \in \pi_1(S)$ which is not conjugate into $\pi_1(\partial S)$ in $\pi_1(S)$. 
Then $\mathcal{S}(\lambda) = \{ 0 \}$, but $\mathcal{S}(\alpha) \not\ni 0$ \cite{GabaiIII}. 
Thus $\mathcal{S}(\lambda) \cap \mathcal{S}(\alpha) = \emptyset$. 
Then Corollary~\ref{characterization} ($(2) \Rightarrow (1)$) shows that 
$\langle \lambda, \alpha \rangle$ admits a persistent element. 
Hence, the Seifert surface subgroup $\pi_1(S)$ contains a persistent element. 
\end{example}

\begin{remark}
\label{Seifert_surface_cyclic}
Example~\ref{Seifert_surface} does not hold if $K$ has a cyclic surgery. 
Actually, if $K$ admits a cyclic surgery slope $r$,  
then every element of the commutator subgroup vanishes for $r$--Dehn filling. 
In particular, the Seifert surface subgroup ($\subset [G(K), G(K)]$) 
never contains a persistent element. 
\end{remark}

\medskip

Generically, when we choose $g_1, \dots, g_n \in G(K)$ arbitrarily, 
we may expect $\mathcal{S}(g_1) \cap \cdots \cap \mathcal{S}(g_n) = \emptyset$.  
So, generically, non-cyclic subgroups are expected to admit persistent elements.

Recall that every nontrivial not group $G(K)$ is fully residually finite \cite{Hem_residual_finite}, 
i.e. for a given finitely many nontrivial elements $g_1, \dots, g_n \in G(K)$, 
we may find a finite index subgroup $H$ which does not contain $g_1, \dots, g_n$. 
Thus $G(K)$ is rich in finite index subgroups. 
On the other hand, 
for any hyperbolic knot without finite surgery, 
$\langle\!\langle r \rangle\!\rangle$ has infinite index in $G(K)$ for $r \in \mathbb{Q}$, 
and hence no finite index subgroup is contained in $\langle\!\langle r \rangle\!\rangle$. 
Thus Theorem~\ref{characterization} implies 

\begin{corollary}[Finite index subgroup]
\label{finite_index}
Let $K$ be a torsion-free hyperbolic knot. 
Every finite index subgroup of $G(K)$ contains a persistent element. 
\end{corollary}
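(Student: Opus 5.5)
We deduce the corollary from Theorem~\ref{characterization}, implication $(2)\Rightarrow(1)$. Let $H$ be a finite index subgroup of $G(K)$. It suffices to show that $H$ is not contained in $\langle\!\langle r \rangle\!\rangle$ for any $r \in \mathbb{Q}$. Suppose instead that $H \subset \langle\!\langle r \rangle\!\rangle$ for some $r$. Then $\langle\!\langle r \rangle\!\rangle$ has finite index in $G(K)$. Since $\pi_1(K(r)) = G(K)/\langle\!\langle r \rangle\!\rangle$, the group $\pi_1(K(r))$ is finite.

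Next we show this is impossible. Because $K$ is torsion-free hyperbolic, $K$ admits no torsion surgery, and in particular no finite surgery. The one point needing care is the trivial group, since a trivial group is finite but has no torsion. We exclude it using Property P (Theorem~\ref{PropertyP_rephrase}): the meridian survives in $\pi_1(K(r))$, so $\pi_1(K(r)) \neq \{1\}$. A nontrivial finite group has nontrivial torsion, so $r$--surgery would be a torsion surgery, contradicting the hypothesis.

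Hence $\langle\!\langle r \rangle\!\rangle$ has infinite index for every $r$, and no finite index subgroup can lie inside it. Thus $H$ satisfies condition $(2)$ of Theorem~\ref{characterization}, and so $H$ contains a persistent element. The substantive work is entirely in Theorem~\ref{characterization}; apart from the trivial-group case just handled, the step here is routine.
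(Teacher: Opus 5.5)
Your proposal is correct and follows essentially the same route as the paper. The paper likewise observes that, absent finite surgeries, each $\langle\!\langle r \rangle\!\rangle$ has infinite index in $G(K)$, so no finite index subgroup lies in it, and then applies $(2)\Rightarrow(1)$ of Theorem~\ref{characterization}; your use of Property P to rule out $\pi_1(K(r))=\{1\}$ simply makes explicit a step the paper leaves implicit in passing from ``no torsion surgery'' to ``no finite surgery''.
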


The next corollary puts a constraint on subgroups contained in $\displaystyle\bigcup_{r \in \mathbb{Q}} \langle\!\langle r \rangle\!\rangle$. 

\begin{corollary}
\label{structure_subgroup}
Let $K$ be a torsion-free hyperbolic knot. 
Every subgroup $H$ contained in $\displaystyle\bigcup_{r \in \mathbb{Q}} \langle\!\langle r \rangle\!\rangle$ 
lies in $\langle\!\langle r \rangle\!\rangle$ for some $r \in \mathbb{Q}$. 
\end{corollary}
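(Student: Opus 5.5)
The corollary follows from Theorem~\ref{characterization}, read through its contrapositive. Let $H$ be a subgroup with $H \subset \bigcup_{r \in \mathbb{Q}} \langle\!\langle r \rangle\!\rangle$. We will show that $H$ contains no persistent element. Condition (1) of Theorem~\ref{characterization} then fails, so condition (2) fails, and this means exactly that $H \subset \langle\!\langle r \rangle\!\rangle$ for some $r \in \mathbb{Q}$.

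First dispose of the trivial subgroup. If $H = \{1\}$, it lies in every $\langle\!\langle r \rangle\!\rangle$ and there is nothing to prove. (In any case persistent elements are nontrivial by definition, so the identity is never one.)

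Now take any nontrivial $h \in H$. By hypothesis $h \in \langle\!\langle r_h \rangle\!\rangle$ for some $r_h \in \mathbb{Q}$. The normal closure $\langle\!\langle r_h \rangle\!\rangle$ is exactly the kernel of $G(K) \to \pi_1(K(r_h))$, so $h$ becomes trivial after $r_h$--Dehn filling. Hence $r_h \in \mathcal{S}(h)$, and $h$ is not persistent.

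Since no element of $H$ is persistent, Theorem~\ref{characterization} $((2)\Rightarrow(1))$ forces $H$ to be contained in $\langle\!\langle r \rangle\!\rangle$ for a single $r$. The slope $r$ does not depend on $h$, which is the content of the corollary.

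There is no real obstacle in this argument. All the depth sits in Theorem~\ref{characterization}, and through it in the existence of a minimizer (Theorem~\ref{minimizer}). Using the minimizer directly gives an equivalent route: choose $h_0 \in H$ with $\mathcal{S}(h_0) = \mathcal{S}_{\cap}(H)$. The element $h_0$ is not persistent, so $\mathcal{S}_{\cap}(H) \neq \emptyset$. Any $r$ in this intersection kills every element of $H$, which gives $H \subset \langle\!\langle r \rangle\!\rangle$.
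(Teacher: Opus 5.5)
Your proposal is correct and matches the paper's intended derivation. The paper states this corollary without proof as an immediate consequence of Theorem~\ref{characterization}: every element of $H$ lies in some $\langle\!\langle r \rangle\!\rangle$ with $r \in \mathbb{Q}$ and hence is not persistent, so the contrapositive of $(2)\Rightarrow(1)$ yields a single slope $r$ with $H \subset \langle\!\langle r \rangle\!\rangle$.
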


Let us turn to look at the complement of subgroups of $G(K)$. 
The next result asserts that 
we may find a persistent element in out side of any proper subgroup of $G(K)$. 

\begin{theorem}
\label{complement}
Let $K$ be a torsion-free hyperbolic knot. 
Then for any proper subgroup $N$ of $G(K)$, 
there exists a persistent element $g$ in $G(K) - N$.
\end{theorem}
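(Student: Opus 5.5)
Given a proper subgroup $N$ of $G(K)$, I would produce a persistent element outside $N$ by combining a persistent element with elements of $G(K)-N$. The basic tool is Theorem~\ref{characterization} (equivalently Theorem~\ref{minimizer}): any subgroup not contained in a single $\langle\!\langle r \rangle\!\rangle$ contains a persistent element. The proof splits according to whether $N$ itself contains persistent elements.

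\textbf{Case 1: $N$ contains no persistent element.} Then $G(K) - N$ contains every persistent element of $G(K)$. For instance the meridian $\mu$, which is persistent by Theorem~\ref{PropertyP_rephrase}, must lie outside $N$, and we are done.

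\textbf{Case 2: $N$ contains a persistent element $p$.} Pick any $x \in G(K)-N$, which exists since $N$ is proper. Consider the coset $xN$; every element $xn$ with $n\in N$ lies outside $N$, because $xn \in N$ would force $x \in N$. It therefore suffices to find $n \in N$ with $xn$ persistent. I would look at the family $x p^k$, $k \in \mathbb{Z}$, all lying in $xN$, and show that some member is persistent.

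The argument goes via the subgroup $H = \langle x, p\rangle$. For each slope $r$, the image of $p$ in $\pi_1(K(r))$ is nontrivial. Since $K$ is torsion-free, $\pi_1(K(r))$ is torsion-free for every nontrivial $r$, so $p$ has infinite order there. Hence for each $r$, $xp^k$ and $xp^{k'}$ cannot both be trivial in $\pi_1(K(r))$ when $k\neq k'$, as that would give $p^{k-k'}=1$. So each slope $r$ kills $xp^k$ for at most one $k$, i.e.\ $|\{k : r\in\mathcal{S}(xp^k)\}|\le 1$.

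This alone is not enough, because there are infinitely many slopes. The key additional input is finiteness: the set $\mathcal{S}(x)$ and the slopes $r$ for which some $xp^k$ dies should form a controlled set. I would use the quantitative scl/hyperbolic argument behind Theorem~\ref{minimizer}. For $r$ outside a finite set $F$, depending on $x$ and $p$ through translation lengths in the filled hyperbolic manifolds, the group $\langle x,p\rangle$ injects into $\pi_1(K(r))$, or at least no nontrivial short word in $x,p$ dies there. Here $xp^k$ must be handled uniformly in $k$, and that uniformity is where the proof of Theorem~\ref{minimizer} is needed. Granting this, only the finitely many slopes in $F$ matter, and each kills at most one $xp^k$. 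So all but finitely many $k$ give a persistent $xp^k \notin N$.

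\textbf{Main obstacle.} The expected difficulty is this uniform finiteness in $k$. If it fails, an alternative is to apply Theorem~\ref{characterization} to a subgroup contained in $xNx^{-1}\cdot$-cosets, but cosets are not subgroups. Instead I would use the minimizer $h_0$ of $\langle x p^k : k\rangle$ together with the observation that each slope kills at most one element of the family. From this, $\mathcal{S}_\cap$ of any infinite subfamily is empty, and the minimizer argument should extract a single persistent word of the form $xp^k$.
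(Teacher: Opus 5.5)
Your Case~1 is fine, but Case~2 has a genuine gap. The ``uniform finiteness in $k$'' that you grant is false for an arbitrary persistent $p\in N$ and an arbitrary $x\notin N$, and neither of your arguments supplies it. Take $N=\langle\mu\rangle$, $p=\mu$ and $x=\lambda$, the preferred longitude. Then $xp^k=\lambda\mu^k$ is the slope element of slope $k$, so $k\in\mathcal{S}(xp^k)$ for every $k\in\mathbb{Z}$. Each integer slope kills exactly one member of the family, as your ``at most one $k$ per slope'' observation predicts, yet no member is persistent. The same example defeats your fallback. The family has $\mathcal{S}_{\cap}$ empty, but Theorem~\ref{minimizer} only produces \emph{some} element of the subgroup the family generates, which is $\langle x,p\rangle$. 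Here $\mu$ itself is such a minimizer, and nothing forces the minimizer to have the form $xp^k$ or to lie outside $N$.

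What is missing is a choice of elements that meets both hypotheses of the Shrinking Lemma (Lemma~\ref{S_K_intersection_no_torsion}).

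\begin{itemize}
\item The element raised to large powers must be non-peripheral. That is what gives a lower bound $\delta_g>0$ on $\mathrm{scl}$, uniform over all hyperbolic fillings. For $\mu$ there is no such bound: in $\pi_1(K(a))$ one has $p_a(\mu)^{a}=p_a(\lambda)^{-1}$, so $\mathrm{scl}(p_a(\mu))\le \mathrm{scl}_{G(K)}(\lambda)/a\to 0$.
\item The fixed factor must lie in $[G(K),G(K)]$. Then its $\mathrm{scl}$ in $G(K)$ is finite and, by monotonicity, bounds its $\mathrm{scl}$ in every filling; this also lets the slope $0$ be handled homologically.
\end{itemize}

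Your choices of $p$ and $x$ guarantee neither condition.

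The paper swaps the roles. It takes a persistent $h\in[G(K),G(K)]$ from an incompressible Seifert surface subgroup (Example~\ref{Seifert_surface}). If $h\notin N$ it is done. Otherwise it picks a non-peripheral $g\notin N$ by Lemma~\ref{ubiquitous_non-peripheral}. Lemma~\ref{g^mh} then gives $\mathcal{S}(g^mh)=\mathcal{S}(g)\cap\mathcal{S}(h)=\emptyset$ for all $m\ge C$. Finally, $g^mh$ and $g^{m+1}h$ cannot both lie in $N$, because $(g^{m+1}h)(g^mh)^{-1}=g$. So in the paper the element outside $N$ is the one being powered, and getting outside $N$ comes from two consecutive exponents rather than from a coset.
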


As a direct consequence of this, we have: 

\begin{corollary}
\label{cor:complement}
Let $K$ be a torsion-free hyperbolic knot. 
Then there is no proper subgroup of $G(K)$ which contains all persistent elements.  
\end{corollary}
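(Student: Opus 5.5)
The statement immediately above is Corollary~\ref{cor:complement}. It follows directly from Theorem~\ref{complement}, and no further machinery is needed.

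Suppose, for a contradiction, that $N \subsetneq G(K)$ is a proper subgroup containing every persistent element of $G(K)$. Theorem~\ref{complement} applies to this $N$ and produces a persistent element $g \in G(K) - N$. This contradicts the assumption that all persistent elements lie in $N$. Hence no proper subgroup contains all persistent elements.

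One may also want the stronger consequence that the persistent elements generate $G(K)$. Apply the corollary to the subgroup $P$ generated by all persistent elements: $P$ contains every persistent element, so it cannot be proper, and therefore $P = G(K)$.

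There is no real obstacle here. All the difficulty lies in Theorem~\ref{complement} itself, which in turn rests on Theorem~\ref{minimizer} and Theorem~\ref{characterization}. Note also that the argument uses only the existence statement of Theorem~\ref{complement}, so the torsion-free hyperbolic hypothesis enters solely through that theorem.
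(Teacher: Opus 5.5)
Your proof is correct and matches the paper, which states the corollary as a direct consequence of Theorem~\ref{complement} applied to a hypothetical proper subgroup containing every persistent element. Your added remark that the persistent elements therefore generate $G(K)$ is an equivalent reformulation and is also valid.
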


\medskip

\subsection{Persistent elements and pseudo-meridians}
\label{subsection:pseudo-meridian}

We say that a nontrivial element $g$ is a \textit{pseudo-meridian} if 
it normally generates $G(K)$, but it is not an automorphic image of a meridian \cite{Tsau1,SWW}.  
It is conjectured that every nontrivial knot admits a pseudo-meridian \cite{SWW}. 
However, this conjecture is widely open. 

Suppose that $G(K)$ admits a pseudo-meridian $g$. 
Then, by definition, a meridian can be expressed as finite product of some conjugate of $g$, 
and hence if $g$ becomes trivial in $\pi_1(K(r))$, 
then so does the meridian.  
Following Theorem~\ref{PropertyP_rephrase} $r = \infty$, 
i.e. $g$ is a persistent element. 
In Section~\ref{p-m}, we will discuss a relationship between persistent elements and pseudo-meridians.

\medskip

\section{Persistent elements in a knot group}

In this section we will establish: 

\begin{thm_existence_persistent_elements}
For every nontrivial knot, 
its group contains infinitely many persistent elements with disjoint automorphic orbits, 
none of which contains a power of the meridian.
\end{thm_existence_persistent_elements}

\subsection{Baumslag-Solitar relators}
\label{subsection:BS}

Let $G$ be a group. 
Assume that $x, y \in G$ satisfies $x^n = y x^m y^{-1}$ for nonzero integers $m$ and $n$. 
We call the relation $x^n = y x^m y^{-1}$ the \textit{Baumslag-Solitar relation}. 

The following result due to Shalen \cite{Shalen}, 
which generalizes the previous result \cite{JS}, 
asserts that the Baumslag-Solitar relation cannot hold in a non-degenerate way in the fundamental group of an orientable $3$--manifold. 

\begin{proposition}[\cite{Shalen}]
\label{BS_Shalen}
Let $M$ be an orientable $3$--manifold. 
Suppose that there are elements $x, y \in \pi_1(M)$ and nonzero integers $m$ and $n$ such that 
$x^n = y x^m y^{-1}$. 
Then either 
\begin{enumerate}
\item
$x$ has a finite order, or 
\item
$m = \pm n$
\end{enumerate}
\end{proposition}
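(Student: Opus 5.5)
Since this proposition is quoted from \cite{Shalen}, I would give a proof that follows the geometric decomposition of $3$--manifolds and uses \emph{translation length} as the main invariant. Assume $x$ has infinite order and $x^n = yx^my^{-1}$; the goal is $|n| = |m|$.

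\textbf{Reduction to compact prime pieces.} First replace $\pi_1(M)$ by the finitely generated subgroup $\langle x, y\rangle$. Pass to the corresponding cover and apply Scott's compact core theorem, so that $M$ may be assumed compact. By the Kneser--Milnor decomposition, $\pi_1(M)$ is a free product of fundamental groups of prime manifolds. Let $\ell(\cdot)$ denote the translation length on the Bass--Serre tree of this splitting. If $x$ acts hyperbolically, then conjugation invariance gives $|n|\,\ell(x) = \ell(x^n) = \ell(x^m) = |m|\,\ell(x)$ with $\ell(x) > 0$, hence $|n| = |m|$. Otherwise $x$ is conjugate into a factor. Since edge stabilizers are trivial, $x^n$ fixes only the vertex fixed by $x$, so $y$ lies in that same factor. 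The $S^2\times S^1$ factor is $\mathbb{Z}$ and is trivial to handle, so we reduce to $M$ compact, orientable and irreducible.

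\textbf{Geometric pieces.} For $M$ irreducible, use geometrization. If $M$ is hyperbolic and $x$ is loxodromic, the same translation-length argument in $\mathbb{H}^3$ applies. If $x$ is parabolic, maximal parabolic subgroups are malnormal $\mathbb{Z}$ or $\mathbb{Z}^2$ subgroups. So $y$ normalizes the cusp subgroup containing $x$, and inside an abelian group the relation forces $n = m$. If $M$ is Seifert fibered with fiber class $h$, there are two cases.
\begin{itemize}
\item If $x$ is a power of $h$, conjugation sends $h$ to $h^{\pm1}$, giving $n = \pm m$.
\item Otherwise, project to the base orbifold group, which acts on $\mathbb{H}^2$ or $\mathbb{E}^2$, and use translation length there. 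If the image of $x$ is elliptic, it has finite order $k$; then $x^k$ is a power of $h$, and we return to the first case.
\end{itemize}
Sol and the virtually abelian geometries are handled by passing to a finite index subgroup that is a lattice or a polycyclic group, where growth or length comparison gives $|n| = |m|$.

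\textbf{JSJ case (main obstacle).} For general Haken $M$, let $\pi_1(M)$ act on the JSJ tree. If $x$ is hyperbolic on the tree, translation length again finishes. The hard case is when $x$ is elliptic. Then $x^n$ fixes $\mathrm{Fix}(x)$ and also $y\,\mathrm{Fix}(x^m)$. I would first try to show $y\,\mathrm{Fix}(x^m)$ can be taken to meet $\mathrm{Fix}(x^n)$, using that fixed sets of powers of a loxodromic-free element are bounded subtrees. Then I would use the near-acylindricity of the JSJ tree: long paths fixed by a nontrivial element occur only through Seifert pieces along fiber directions. Along a path from a fixed vertex to its $y$-translate, the element $x^{nm}$ therefore lies in edge groups $\mathbb{Z}^2$ or in fiber subgroups. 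Conjugation between fiber subgroups, or inside $\mathbb{Z}^2$, preserves powers up to sign, which yields $|n| = |m|$. Making this elliptic case rigorous, by controlling how $y$ can move the fixed subtree of a root of a peripheral element, is the step I expect to require the most care.
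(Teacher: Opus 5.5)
The paper does not prove this proposition; it quotes it from Shalen, whose argument predates geometrization. Your geometrization-based route is therefore necessarily different and has to stand on its own. Several parts are essentially fine:
\begin{itemize}
\item the compact-core and free-product reduction (with trivial edge stabilizers, $\{v\}=\mathrm{Fix}(x^n)=y\,\mathrm{Fix}(x^m)=\{yv\}$; irreducible factors with compressible boundary need one more splitting along disks, handled the same way);
\item the Kleinian case;
\item the Seifert case (if you use cusped structures on the base, parabolic elements there are handled by the same fixed-point argument).
\end{itemize}

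The genuine gap is where you flagged it: the case where $x$ is elliptic on the JSJ tree is not proved, and your sketch would not prove it. Your first step is vacuous, since the relation already gives $\mathrm{Fix}(x^n)=y\,\mathrm{Fix}(x^m)$. Knowing that $x^{nm}$ fixes the path from $v$ to $yv$, and so lies in edge groups or fiber subgroups along it, constrains $x$ only. Your closing step, ``conjugation between fiber subgroups or inside $\mathbb{Z}^2$ preserves powers up to sign'', needs $y$ itself to lie in, or normalize, such a subgroup, and that is exactly what has to be shown. Once edge stabilizers are nontrivial, the equality of fixed sets no longer pins $y$ down as it did in the free-product case.

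What is missing is a reduction that puts $y$ and a nonzero power of $x$ into a single vertex group. One way is as follows.
\begin{itemize}
\item Set $F_\infty=\bigcup_{k\neq 0}\mathrm{Fix}(x^k)$. Since $\mathrm{Fix}(x^a)\cup\mathrm{Fix}(x^b)\subset\mathrm{Fix}(x^{ab})$, this is a directed union of subtrees, hence a subtree.
\item From $yx^{km}y^{-1}=x^{kn}$ and $y^{-1}x^{kn}y=x^{km}$ you get $yF_\infty=F_\infty$.
\item For non-Sol $M$ the JSJ action is acylindrical: nontrivial elements fix subtrees of uniformly bounded diameter. Indeed, an interior vertex of a path fixed by $g\neq 1$ must be a Seifert piece whose fiber subgroup contains $g$, or a twisted $I$-bundle over the Klein bottle, and minimality of the JSJ tori rules out long chains of such vertices. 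This is the input you must prove or cite.
\item Since any two points of $F_\infty$ lie in a common $\mathrm{Fix}(x^{ab})$, $F_\infty$ is bounded. Then $y$ preserves its center, and since there are no inversions, $y$ fixes a vertex $c\in F_\infty$.
\item So $x^k\in G_c$ for some $k\neq 0$, and $y(x^k)^my^{-1}=(x^k)^n$ holds in the hyperbolic or Seifert vertex group $G_c$, where your geometric cases give $|m|=|n|$.
\end{itemize}

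For Sol the fiber $\mathbb{Z}^2$ fixes the whole tree, so the separate argument you gesture at is genuinely needed, and it should be made explicit:
\begin{itemize}
\item For $x=v\in\mathbb{Z}^2$ and $y=(w,k)$, the relation becomes $A^k(mv)=nv$. This has no solution for $k\neq 0$, since the Anosov matrix $A^k$ has no rational eigenvalue; for $k=0$ it gives $m=n$.
\item Otherwise project to $\mathbb{Z}$ to get $m=n$.
\item To pass to a finite-index torus-bundle cover, use $y^bx^{m^b}y^{-b}=x^{n^b}$.
\end{itemize}
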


Following Proposition~\ref{BS_Shalen}, 
the relation $x^n = y x^m y^{-1}$ is very restrictive in $3$--manifold groups. 
We call a word of type $x^{-n} y x^m y^{-1}$ the \textit{Baumslag-Solitar relator}.  

For each $r \in \mathbb{Q}$, $r$--Dehn filling induces a natural epimorphism 
\[
p_r \colon G(K) \to  \pi_1(K(r)) = G(K) / \langle\!\langle  r \rangle\!\rangle. 
\]

A simple application of Proposition~\ref{BS_Shalen} leads us:

\begin{lemma}
\label{BS_relation}
Let $x$ and $y$ be nontrivial elements in $G(K)$ such that $p_r(x)$ is nontrivial in $\pi_1(K(r))$ unless $r$ is a torsion surgery slope.  
Put $g = x^{-n}yx^{m}y^{-1} \in G(K)$ $(m \ne \pm n)$. 
Then $p_r(g)$ is also nontrivial in $\pi_1(K(r))$ unless $r$ is a torsion surgery slope. 
Furthermore, if $x$ is the meridian $\mu$ of the knot $K$,  
then $p_r(g)$ is non trivial in $\pi_1(K(r))$ unless $r$ is a finite surgery slope.
\end{lemma}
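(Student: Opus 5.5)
We argue by contradiction, using Proposition~\ref{BS_Shalen} applied to $M = K(r)$, which is a closed orientable $3$--manifold. Fix a slope $r \in \mathbb{Q}$ that is not a torsion surgery slope. Suppose $p_r(g) = 1$. Since $p_r$ is a homomorphism, this says exactly that
\[
p_r(x)^n = p_r(y)\, p_r(x)^m\, p_r(y)^{-1}
\]
holds in $\pi_1(K(r))$, with $m, n$ nonzero integers. Proposition~\ref{BS_Shalen} then tells us that either $p_r(x)$ has finite order or $m = \pm n$. The second alternative is excluded by hypothesis.

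In the first alternative, $p_r(x)$ is nontrivial by assumption, because $r$ is not a torsion surgery slope. A nontrivial element of finite order would be torsion in $\pi_1(K(r))$, contradicting the choice of $r$. Hence $p_r(g) \ne 1$. Note that the hypothesis that $y$ is nontrivial is never actually used; if $p_r(y)$ happened to be trivial, the relation would read $p_r(x)^{n-m} = 1$, which is in any case covered by Proposition~\ref{BS_Shalen}.

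For the meridian case $x = \mu$, we must also handle torsion surgery slopes that are not finite surgery slopes, that is, reducing surgeries. Property P (Theorem~\ref{PropertyP_rephrase}) gives $p_r(\mu) \ne 1$ for every $r \in \mathbb{Q}$. So the only way the argument above can fail is if $p_r(\mu)$ has finite order for a reducing slope $r$ at which $\pi_1(K(r))$ is infinite. This is the main obstacle, and we would handle it as follows.

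Since $p_r(\mu)$ normally generates $\pi_1(K(r))$, we use the prime decomposition $\pi_1(K(r)) = A * B$. If $p_r(\mu)$ had finite order, it would be conjugate into a factor, say $A$. Then its normal closure would lie in the normal closure of $A$, which is a proper subgroup because $B \ne 1$ (the quotient of $A*B$ by the normal closure of $A$ is $B$). This contradicts normal generation. One possible gap remains: a factor such as $S^1 \times S^2$ might not fit the free-product splitting used here. This is harmless, since it would force $H_1$ to be infinite, whereas $H_1(K(r))$ is finite for $r \ne 0$, and $0$--surgery is never reducible by Gabai.

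Thus for $x = \mu$ the finite-order alternative occurs only when $\pi_1(K(r))$ is finite, that is, when $r$ is a finite surgery slope. Otherwise Proposition~\ref{BS_Shalen} forces $m = \pm n$, contrary to hypothesis, and so $p_r(g) \ne 1$.
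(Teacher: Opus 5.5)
Your proof is correct. For the first assertion it follows the paper's argument. The paper splits into the cases $p_r(y)=1$ and $p_r(y)\ne 1$ and treats the first by hand. You observe, correctly, that Proposition~\ref{BS_Shalen} already covers $y=1$ trivially, so the split is unnecessary.

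The real difference is in the meridian case. Both you and the paper reduce to showing that $p_r(\mu)$ cannot have finite order when $r$ is a reducing slope. The paper handles this by citing \cite[Proposition~3.1]{IMT_realization}, a result about images of slope elements under non-finite surgeries. The paper also invokes that result for non-meridional slope elements in Case~1 of the proof of Theorem~\ref{minimizer}, where normal generation is unavailable. You instead give a self-contained argument that uses only the fact that $p_r(\mu)$ normally generates $\pi_1(K(r))=A*B$. A finite-order element of a free product is conjugate into a factor, so its normal closure lies in $\langle\!\langle A\rangle\!\rangle$. That subgroup is proper because the quotient is $B\ne 1$. Your route is more elementary and applies verbatim to any normal generator, such as a pseudo-meridian. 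The cited proposition is more general, covering all slope elements rather than just normal generators, which is why the paper needs it elsewhere.

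One point is worth making explicit: you need both free factors to be nontrivial. For $r\ne 0$, your $H_1$ argument shows every essential sphere separates, so $K(r)=M_1\# M_2$ with $M_i\not\cong S^3$. The Poincar\'e conjecture then gives $\pi_1(M_i)\ne 1$. The paper's standing fact that torsion surgeries are finite or reducing uses geometrization anyway, so this costs nothing.
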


\begin{proof}
Assume that $r$ is not a finite surgery slope, 
and that 
$p_r(g) = p_r(x^{-n}yx^{m}y^{-1}) = 1 \in \pi_1(K(r))$.   
  
If $p_r(y) =1 \in \pi_1(K(r))$, 
then $p_r(g) = p_r(x)^{-n} p_r(x)^m = p_r(x)^{m-n}  = 1 \in \pi_1(K(r))$. 
By the assumption, $m - n \ne 0$. 
If $|m - n| =1$, then $p_r(x) = 1$ in $\pi_1(K(r))$, 
contradicting the choice of $x$. 
So $|m - n| \ge 2$, and $p_r(x)$ is a torsion element in $\pi_1(K(r))$, a contradiction.

If $p_r(y) \ne 1 \in \pi_1(K(r))$, 
then we have the Baumslag-Solitar relation 
\[
p_r(x)^{n} = p_r(y)p_r(x)^{m}p_r(y)^{-1}\ \textrm{in}\  \pi_1(K(r))\  
\textrm{with}\ m \ne \pm n.
\] 
It follows from Proposition~\ref{BS_Shalen} that $p_r(x)$ is a torsion element in $\pi_1(K(r))$, 
contradicting the initial assumption.

Now we take $x$ as the meridian $\mu$.  
Then it remains nontrivial after any nontrivial Dehn filling on $K$ (Theorem~\ref{PropertyP_rephrase}). 
Assume that $p_r(g) = p_r(\mu^{-n}y \mu^{m}y^{-1}) = 1$ in $\pi_1(K(r))$. 
If $p_r(y) =1 $ in $\pi_1(K(r))$, 
then $p_r(g) =  p_r(\mu)^{-n}  p_r(\mu)^m =  p_r(\mu)^{m-n}  = 1 \in \pi_1(K(r))$. 
By the assumption, $m - n \ne 0$. 
If $|m - n| =1$, then $p_r(\mu) = 1$ in $\pi_1(K(r))$, a contradiction. 
So $|m - n| \ge 2$, 
and $p_r(\mu)$ is a torsion element in $\pi_1(K(r))$, 
i.e. $p_r(\mu)^{m-n} =1$. 
This implies that $r$ is a finite surgery slope or a reducing surgery slope. 
\cite[Proposition~3.1]{IMT_realization} excludes the latter possibility. 

If $p_r(y) \ne 1 \in \pi_1(K(r))$, 
then we have the Baumslag-Solitar relation 
$p_r(\mu)^{n} = p_r(y)p_r(\mu)^{m}p_r(y)^{-1}$ in $\pi_1(K(r))$ with $m \ne \pm n$. 
It follows from Proposition~\ref{BS_Shalen} that $p_r(\mu)$ is a torsion element in $\pi_1(K(r))$. 
Hence, $r$ is a finite surgery slope as above.
\end{proof}

\medskip

Suitably choosing parameters $(m, n)$ in the Baumslag-Solitar relator, 
we obtain:

\begin{theorem}
\label{persistent_BS}
Let $K$ be a knot in $S^3$ which is not a trefoil knot. 
For any nontrivial element $y \in G(K)$, 
let $g = \mu^{n - m}y \mu^{m}y^{-1} \in G(K)$ \($m > 1$ and $|n| =1, 2$\). 
Then $g$ is a persistent element. 
\end{theorem}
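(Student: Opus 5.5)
The plan is to read $g$ as a Baumslag--Solitar relator in the meridian and apply Lemma~\ref{BS_relation}. The only slopes that lemma does not cover are finite surgery slopes, and I will handle those by a homology count.

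\textbf{Step 1: match the form of Lemma~\ref{BS_relation}.} Put $N = m-n$, so that $g = \mu^{-N} y \mu^{m} y^{-1}$. The lemma requires $m \neq \pm N$.
\begin{itemize}
\item $m = N$ would force $n = 0$, which is excluded.
\item $m = -N$ would force $n = 2m$. But $m > 1$ gives $|2m| \ge 4$, while $|n| \le 2$.
\end{itemize}
I will treat $K$ as nontrivial, which is implicit in the notion of persistence, and note that $N \ne 0$ since $m>1\ge |n|$. Taking $x = \mu$, the second half of Lemma~\ref{BS_relation} shows that $p_r(g) \neq 1$ for every nontrivial slope $r$ that is not a finite surgery slope.

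\textbf{Step 2: $g \neq 1$ in $G(K)$.} Suppose $g=1$. Then $\mu^{N} = y\mu^{m}y^{-1}$ with $m \neq \pm N$. Since $G(K)$ is the fundamental group of an orientable $3$--manifold, Proposition~\ref{BS_Shalen} makes $\mu$ a torsion element. But $G(K)$ is torsion-free and $\mu \neq 1$, a contradiction. So $g$ is a nontrivial element.

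\textbf{Step 3: finite surgery slopes, via homology.} Let $r = p/q$ be a finite surgery slope and suppose $p_r(g)=1$. Then its image in $H_1(K(r)) \cong \mathbb{Z}/|p|$ is trivial. The meridian maps to a generator $[\mu]$, and the conjugation $y\mu^m y^{-1}$ maps to $m[\mu]$. Hence the image of $g$ is
\[
(n-m)[\mu] + m[\mu] = n[\mu].
\]
This vanishes only if $|p|$ divides $n$, and since $|n| \in \{1,2\}$ that means $|p| \in \{1,2\}$. It therefore remains to show that a non-trefoil nontrivial knot has no finite surgery with $|p| \le 2$.

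\textbf{Step 4: the main obstacle, ruling out $|p|\le 2$.}
\begin{itemize}
\item For $|p| = 1$, $K(r)$ is a homology sphere with finite fundamental group, hence $\pm$ the Poincar\'e sphere. By Kronheimer--Mrowka--Ozsv\'ath--Szab\'o and Ghiggini, only the trefoil has such a surgery.
\item For $|p| = 2$ with cyclic $\pi_1$, the manifold is $\mathbb{R}P^3$. The Kronheimer--Mrowka--Ozsv\'ath--Szab\'o result shows that only the unknot produces it.
\item For $|p|=2$ with non-cyclic finite $\pi_1$, I would invoke the classification of finite surgeries on knots in $S^3$. For hyperbolic knots this is Ni--Zhang, and the known finite surgery slopes all have $|p| \ge 17$ or so. For satellite knots it is Bleiler--Hodgson, where only cables of torus knots occur and $|p|$ is large. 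For torus knots it is Moser's description, where $|p|\le 2$ forces the trefoil.
\end{itemize}
This citation-heavy case check is where I expect the real work. Once it is done, $p_r(g) \neq 1$ for every nontrivial $r$, and together with Step 2 this shows that $g$ is persistent.
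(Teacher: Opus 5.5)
\textbf{Steps 1--3.} These match the paper's proof. Lemma~\ref{BS_relation} with $x=\mu$ handles every slope that is not a finite surgery slope. The homology count $[p_r(g)]=n[\mu]$ then reduces everything to finite surgeries with $|H_1(K(r))|\in\{1,2\}$. Your explicit check that $m\neq\pm(m-n)$ is a detail the paper leaves tacit. Step~2 is harmless but redundant, since surviving a single filling already forces $g\neq 1$.

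\textbf{The gap is in Step~4.} It sits in the hyperbolic part of the case $|p|=2$ with non-cyclic finite $\pi_1$. Pointing to ``the known finite surgery slopes'' is not an argument. To close this case you need a theorem that excludes every hyperbolic knot, not a catalogue of examples, and you have not identified one. The classification of knots with finite surgeries is not complete, and your Ni--Zhang citation does not, as you state it, give a lower bound on $|p|$. The case is also not vacuous. $S^3/O^*$, the octahedral space form with $H_1\cong\mathbb{Z}/2$, really does arise as $\pm 2$--surgery on a trefoil. So you must rule out an unknown hyperbolic knot producing it.

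\textbf{How the paper closes it.} The paper uses a Heegaard Floer genus bound in Lemma~\ref{spherical_HS3}, which makes your whole case split unnecessary.
\begin{enumerate}
\item After mirroring, $0<r\le 2$, since $r=1/b$ or $2/b$.
\item A manifold with finite $\pi_1$ is an L-space, so Ozsv\'ath--Szab\'o's rational surgery inequality gives $2g(K)-1\le r\le 2$. Hence $g(K)=1$.
\item L-space knots are fibered (Ni), so $K$ is a genus-one fibered knot: a trefoil or the figure-eight knot.
\item The figure-eight knot has no finite surgery, so $K$ is a trefoil.
\end{enumerate}
This treats $|p|=1$ and $|p|=2$, cyclic or not, and all geometric types of $K$ at once. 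It needs none of Kronheimer--Mrowka--Ozsv\'ath--Szab\'o, Ghiggini, Bleiler--Hodgson or Moser.
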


\begin{proof}
Following Lemma~\ref{BS_relation} $p_s(g) \ne 1$ for all but finite surgery slopes. 
Let $s$ be a finite surgery slope of $K$ and write $s = a/b$ $(a > 0)$. 
Assume that $p_{a/b}(g) = 1$ in $\pi_1(K(s))$. 
Note that $[g] = n [\mu] \in H_1(E(K)) = G(K)/ [G(K), G(K)]$ ($|n| =1, 2$).  
Thus $[p_{a/b}(g)] = n [\mu] \in H_1(K(a/b)) \cong \mathbb{Z}_a$, 
where we use $[\mu]$ to denote the image of the generator $[\mu] \in H_1(E(K))$. 
On the other hand, since $p_{a/b}(g) = 1 \in \pi_1(K(a/b))$, 
$[p_{a/b}(g)] =0 \in H_1(K(a/b))$, 
i.e. $n [\mu] = 0$ in $H_1(K(a/b)) \cong \mathbb{Z}_a$. 
Hence $n$ is a multiple of $a > 0$. 
Our assumption $|n| = 1, 2$ implies $a = 1$ or $2$, 
and $|H_1(K(a/b))|  = 1$ or $2$. 
Then the lemma below shows that $K$ is a trefoil knot. 
\end{proof}

\begin{lemma}
\label{spherical_HS3}
Let $K$ be a nontrivial knot. 
Assume that $r$ is a finite surgery slope with $|H_1(K(r))|= 1$ or $2$ for some $r \in \mathbb{Q}$. 
Then $K$ is a trefoil knot.  
\end{lemma}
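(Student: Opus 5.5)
We plan to argue case by case, according to the geometric type of $K$, using the classification of finite surgeries. A finite surgery is in particular not a reducing surgery; we could instead invoke the cabling-conjecture-type results recalled in \cite[Proposition~3.1]{IMT_realization}. Thus $K(r)$ is a closed orientable $3$--manifold with finite fundamental group, and by geometrization it is a spherical space form $S^3/\Gamma$ with $H_1(K(r)) \cong \Gamma^{ab}$, of order $a = |p|$ where $r = p/q$. The hypothesis forces $|p| \le 2$. The case $p=0$ is excluded because $H_1(K(0)) \cong \mathbb{Z}$ is infinite. So $r = \pm 1/q$ or $r = \pm 2/q$ with $q$ odd in the latter case.

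\textbf{Case $|p|=1$.} Here $K(r)$ is a homology sphere with finite $\pi_1$, so $\pi_1(K(r))$ is either trivial or the binary icosahedral group, i.e.\ $K(r)$ is $S^3$ or the Poincar\'e sphere. The first is impossible by Property P (Theorem~\ref{PropertyP_rephrase}). For the second we will invoke the known classification of knots with Poincar\'e sphere surgeries. Ghiggini's theorem, via Heegaard Floer genus detection and fiberedness, shows that only the trefoil (with the appropriate orientation and slope $\pm1$) yields $\Sigma(2,3,5)$. We will also note that the cyclic surgery theorem \cite{CGLS} already forces $|q|=1$ for a non-torus knot with a nonintegral cyclic slope, so there is no independent issue with $q$.

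\textbf{Case $|p|=2$.} Then $H_1(K(r)) \cong \mathbb{Z}_2$, and $\Gamma$ is a finite subgroup of $SO(4)$ acting freely with abelianization of order $2$. Running through the list of spherical space form groups ($\mathbb{Z}_n$, $D^*_{4n}\times \mathbb{Z}_k$, $T^*\times\mathbb{Z}_k$, $O^*\times \mathbb{Z}_k$, $I^*\times\mathbb{Z}_k$ and the twisted $T^*$ cases), we find that the groups with $|\Gamma^{ab}|=2$ are exactly $\mathbb{Z}_2$, $O^*$ (since $O^{*ab}\cong \mathbb{Z}_2$) and $I^*\times\mathbb{Z}_2$. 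The dihedral-type groups have abelianization of order at least $4$, and the $T^*$ types have abelianization divisible by $3$.
\begin{itemize}
\item If $\Gamma=\mathbb{Z}_2$, then $K(r)=\mathbb{RP}^3$. By Kronheimer--Mrowka--Ozsv\'ath--Szab\'o ($\mathbb{RP}^3$ is an L-space, and their theorem says such a surgery only arises from the unknot), $K$ is trivial, a contradiction.
\item If $\Gamma = O^*$ or $I^*\times\mathbb{Z}_2$, we use the classification of finite surgeries on non-torus hyperbolic knots (Boyer--Zhang, Ni--Zhang) together with torus knot surgery formulas. Torus knot $T_{s,t}$ surgery $p/q$ yields a Seifert fibered space with base orbifold $S^2(s,t,|stq-p|)$. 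With $|p|\le 2$ and $\{s,t\}$ giving a finite triangle group, this forces $\{s,t\}=\{2,3\}$, i.e.\ the trefoil. For hyperbolic knots, the octahedral and icosahedral finite slopes are integral, so $q=\pm1$ and $r=\pm2$. Ni--Zhang's list of finite surgeries on hyperbolic knots in $S^3$ (the known realizers being Berge knots and a few others) has no slope $\pm 2$. Satellite knots with finite surgeries are cables of torus knots with slope $pq\pm1$ \cite{Bleiler-Hodgson type}, and $|pq\pm1|\le 2$ is impossible for a nontrivial cable.
\end{itemize}

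The main obstacle is the hyperbolic case. We do not want to reprove the Ni--Zhang classification, so we will simply cite it, together with Ghiggini's Poincar\'e sphere result and the $\mathbb{RP}^3$ result. The remaining care is in the bookkeeping of the abelianizations of the space form groups.
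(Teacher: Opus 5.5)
Your argument is correct in outline, but it takes a much heavier route than the paper. The paper never identifies $K(r)$. After mirroring so that $r>0$, the hypothesis gives $r=1/b$ or $2/b$, hence $r\le 2$. Finite $\pi_1$ makes $K(r)$ an L-space, so the Ozsv\'ath--Szab\'o bound $2g(K)-1\le r$ forces $g(K)=1$. Ni's theorem that L-space knots are fibered then leaves only the trefoil and the figure-eight knot, and the latter has no finite surgery.

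You instead determine $\pi_1(K(r))$ from the list of spherical space form groups, and eliminate each candidate with a separate deep theorem (Property P, Ghiggini, Kronheimer--Mrowka--Ozsv\'ath--Szab\'o, the torus knot surgery formula, Bleiler--Hodgson, Boyer--Zhang, Ni--Zhang). Your route gives more information: $K(r)$ is the Poincar\'e sphere or the octahedral manifold obtained by $\pm2$-surgery on a trefoil, with $r=\pm1$ or $\pm2$. However, the results carrying the weight are Ghiggini's theorem and the absence of slope $\pm2$ among finite surgeries on hyperbolic knots. These are themselves proved by essentially the genus-bound-plus-fiberedness argument that the paper applies directly, uniformly over all geometric types and with no case split. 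In particular, your hyperbolic $O^*$ case rests on reading the slope list in Ni--Zhang. It is the most fragile step of your write-up, and the paper's method settles it in one line.

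Some details should be corrected.
\begin{enumerate}
\item $I^*\times\mathbb{Z}_2$ is not a spherical space form group: for $I^*\times\mathbb{Z}_k$ one needs $\gcd(k,30)=1$, and here $\mathbb{Z}_2\times\mathbb{Z}_2\subset I^*\times\mathbb{Z}_2$. So the only candidates with $|\Gamma^{ab}|=2$ are $\mathbb{Z}_2$ and $O^*$. This is harmless, since you discard that case anyway.
\item In the satellite case, the claim that $|pq\pm1|\le 2$ is impossible for a nontrivial cable is false as stated: the $(2,1)$-cable has $pq-1=1$. What you actually need is the following. A finite slope $m/n$ on the $(p,q)$-cable of $T_{a,b}$ must satisfy $|m-npq|=1$, since otherwise $K(m/n)$ is reducible or contains an incompressible torus. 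Then $K(m/n)=S^3_{m/(np^2)}(T_{a,b})$, hence $1\le |abnp^2-m|\le 5$. If $|m|\le 2$, the first condition forces $|n|=|q|=1$, and then $|abnp^2-m|\ge 2(2|ab|-1)-1\ge 21$, a contradiction.
\item The remark about the cyclic surgery theorem in the case $|p|=1$ is beside the point, since the Poincar\'e sphere is not a lens space. The slopes $1/q$ with $|q|\ge 2$ have to be excluded by the Poincar\'e sphere theorem itself, or by the genus bound.
\end{enumerate}
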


\begin{proof} 
By taking mirror image, we may assume $r > 0$, where $r = 1/b$ or $2/b$ for some integer $b$. 
Thus $r \le 2$. 
Furthermore, since $|\pi_1(K(r))| < \infty$, $K(r)$ is an L-space, 
and hence \cite{OS_rational} shows that $2g(K) -1 \le r \le 2$, i.e. $2g(K) \le 3$. 
Hence $g(K) = 1$.  
Any L-space knot is known to be fibered \cite{Ni}, 
and hence $K$ must be a trefoil knot or the figure-eight knot. 
The figure-eight knot does not admit a finite surgery, and thus $K$ is a trefoil knot. 
\end{proof}

\begin{remark}
\label{persistent_trefoil}
Following \cite{SWW} a trefoil knot has infinitely many, mutually non-equivalent pseudo-meridians. 
They are persistent elements, each of which represents a homological generator. 
\end{remark}

\medskip

\subsection{Proof of Theorem~\ref{existence_persistent_elements}}
\label{subsection*proof}

Let us prove Theorem~\ref{existence_persistent_elements}. 
We divide into four cases according as $K$ is 
a cabled knot (Case 1), a hyperbolic knot (Case 2), 
a prime, non-cabled satellite knot (Case 3), or a non-prime knot (Case 4). 
In the following, we prove there are infinitely many, 
mutually non-equivalent persistent elements each of which represents the homological generator. 

This immediately implies that the knot group contains infinitely many persistent elements whose automorphic orbits are disjoint, 
and each automorphic orbit does not contain a power of the meridian.
Actually, 
if automorphic images of $g$ and $h$ intersect, then $g$ and $h$ are equivalent, a contradiction. 
Further, since each persistent element we construct is a homological generator, 
if it is not equivalent to the meridian, then it is not equivalent to any power of the meridian neither. 

\medskip 

\noindent
\textbf{Case 1}\ :\  $K$ is a cabled knot. 
If $K$ is a cable of the trivial knot, i.e. it is a torus knot, 
then Silver-Whitten-Williams \cite{SWW} demonstrates that 
$G(K)$ has infinitely many, non-equivalent pseudo-meridians.  
While, if the companion knot is nontrivial, 
then Dutra \cite{Dutra} shows that 
$G(K)$ has infinitely many, mutually non-equivalent pseudo-meridians. 

As noted in Subsection~\ref{subsection:pseudo-meridian}, 
any pseudo-meridian is a persistent element, 
which represents a generator of $H_1(E(K))$. 

\medskip 

In Cases 2 and 3, 
we first show that there are infinitely many, 
mutually ``non-conjugate'' persistent elements each of which represents the homological generator. 
Shortly after that we will improve``being non-conjugate'' to ``being non-equivalent''. 

\medskip 

\noindent
\textbf{Case 2}\ :\ $K$ is a hyperbolic knot. 
Let us take $g_m = \mu^{1-m}y \mu^m y^{-1}$, where $\mu y \ne y \mu$. 
Then Theorem~\ref{persistent_BS} shows that $g_m$ is a persistent element for all integers $m > 1$. 
Furthermore, apply Lemma~\ref{non-conjugate} below to obtain infinitely many, 
mutually non-conjugate persistent elements each of which represents the homological generator. 

\medskip

\noindent
\textbf{Case 3}\ :\ $K$ is a prime, non-cabled satellite knot. 
Consider the torus decomposition of $E(K)$ and let $X$ be the outermost piece, 
namely the decomposing piece which contains $\partial E(K)$. 
Note that the inclusion map induces a monomorphism $\pi_1(X) \to G(K)$, and we regard $\pi_1(X) \subset G(K)$. 
Since $K$ is a prime, non-cabled satellite knot, $X$ is hyperbolic (\cite[VI.3.4.Lemma]{JS}), 
in particular $\pi_1(X)$ is non-abelian. 
We take $y \in \pi_1(X)$ so that $[\mu, y] \ne 1$ in $\pi_1(X)$. 
Let us take $g_m = \mu^{1-m}y \mu^m y^{-1} \in \pi_1(X) \subset G(K)$. 
Then Theorem~\ref{persistent_BS} shows that $g_m$ is a persistent element for all integers $m > 1$. 
Following Lemma~\ref{non-conjugate} $g_m$ and $g_n$ $(m, n >1)$ are not conjugate in $\pi_1(X)$ when $m \ne n$.

\medskip

In cases 2 and 3, 
we have infinitely many, mutually non-conjugate persistent elements in $\pi_1(X)$. 
Now we show that thee are also infinitely many, mutually non-equivalent persistent elements in $\pi_1(X)$, 
and then we show these are non-equivalent in $G(K)$. 

By Mostow-Prasad rigidity $\mathrm{Out}(\pi_1(X)) = \mathrm{Aut}(\pi_1(X))/\mathrm{Inn}(\pi_1(X))$ is finite. 
So choose representatives $\varphi_1, \dots, \varphi_n$ of $\mathrm{Out}(\pi_1(X))$. 
Then for any automorphism $\varphi$ and any element $g \in \pi_1(X)$, 
$\varphi(g)$ is conjugate to $\varphi_i(g)$ for some $i$. 
Thus for each $g$, $\{ g, \varphi_1(g), \dots, \varphi_n(g) \}$ is the automorphic orbit of $g$,which consists of mutually non-conjugate elements. 
Since we have infinitely many, mutually non-conjugate persistent elements in $\pi_1(X)$, 
we have also infinitely many, mutually non-equivalent persistent elements $g_1, \dots$ in $\pi_1(X)$. 
Let us show that these elements are still non-equivalent in $G(K)$. 
Suppose that $g_m$ and $g_n$ $(m \ne n)$ are not equivalent in $\pi_1(X)$, but they are equivalent in $G(K)$ for some $m$ and $n$. 
Here we recall that we chose and fixed a base-point of $\pi_1(X) \subset G(K)$ on $\partial E(K)$.  
Then we have an automorphism $\varphi$ of $G(K)$ such that $\varphi(g_m) = g_n$. 
Since $\varphi$ is induced, up yo conjugation, 
 by a homeomorphism $f$ of $E(K)$ \cite[Corollary~4.2]{Tsau2}, 
up to isotopy which leave the base-point invariant, we may assume $f(X) = X$, and thus $\varphi(\pi_1(X)) = \pi_1(X)$. 
Hence $\varphi |_{\pi_1(X)}$ sends $g_m$ to $g_n$. 
This is a contradiction. 
Hence, we have infinitely many, 
mutually non-equivalent persistent elements in $G(K)$. 
Note that since $g_m = \mu^{1-m}y \mu^m y^{-1} \in \pi_1(X) \subset G(K)$, 
it represents the homological generator of $H_1(E(K))$.

\medskip 

\noindent
\textbf{Case 4}\ :\ 
$K$ is a non-prime knot. 
 
Then $E(K)$ has a decomposition $E(K) = X \cup E(k_1) \cup \cdots \cup E(k_n)$, 
where $k_i$ is a prime knot and $X$ is a composing space which is $[\textrm{disk with $n$--holes}] \times S^1$. 
Note that $\partial X = \partial E(K) \cup \partial E(k_1) \cup \cdots \cup \partial E(k_n)$ 
and a regular fiber $t$ in $\partial E(K)$ is a meridian of $K$. 
Thus $K(r) = E(k_1) \cup \cdots \cup E(k_n) \cup (X \cup_r (S^1 \times D^2))$, 
where $X \cup_r (S^1 \times D^2)$ is a $3$--manifold obtained from $X$ by gluing the solid torus along $\partial E(K) \subset \partial X$ with slope $r$.   
Note that $X \cup_r (S^1 \times D^2)$ is a boundary-irreducible Seifert fiber space for any $r \in \mathbb{Q}$. 
Therefore $\partial E(k_i)$ is incompressible in $K(r)$, and hence $\pi_1(E(k_i))$ injects into $\pi_1(K(r))$.

Then every nontrivial element of $G(k_i)$ is persistent. 
Let us choose $k = k_1$ and 
take a preferred meridian-longitude pair $(\mu_k, \lambda_k)$ of $k$ and elements $\mu_k\lambda_k^p \in G(k)$ ($p \in \mathbb{Z}$). 

Assume that we have an automorphism $\varphi$ of $G(k)$ such that $\varphi(\mu_k \lambda_k^p) = \mu_k \lambda_k^q$. 
Since $\varphi$ is induced by a homeomorphism $f$ of $E(k)$, 
$\varphi(\mu_k \lambda_k^p) = (\mu_k \lambda_k^p)^{\varepsilon}$ ($\varepsilon = \pm 1$). 
This means that $\mu_k \lambda_k^p$ and $\mu_k \lambda_k^q$ are equivalent in $G(k)$ if and only if $p = q$.  
We follow the argument in \cite{Dutra} to show that they are equivalent in $G(K)$ only if $p = q$ as well. 
Suppose that we have an automorphism $\Phi$ of $G(K)$ such that 
$\Phi(\mu_k \lambda_k^p) = \mu_k \lambda_k^q$.  
Then $\Phi$ is induced by a homotopy equivalence $f \colon E(K) \to E(K)$. 
By \cite[Theorem 14.6]{Jo}, 
$f$ may be deformed into $f'$ so that $f'|_X \colon X \to X$ is a homotopy equivalence, 
and $f'|_{\overline{E(K)-X}} \colon \overline{E(K)-X} \to \overline{E(K)-X}$ is a homeomorphism. 
Then for some permutation $\sigma$ of $\{ 1, \dots, n \}$, 
we have $f'(E(k_1)) = E(k_{\sigma(1)})$. 
We can write $\Phi(G(k)) = g G(k_{\sigma(1)}) g^{-1}$ for some (possibly trivial) element $g \in G(K)$. 
If $\sigma(1) = 1$ and $g \in G(k)$,  
then  $\Phi$ induces an automorphism $\psi = \Phi|_{G(k)} \colon G(k) \to G(k)$ such that $\psi(\mu_k \lambda_k^p) = \mu_k \lambda_k^q$. 
This implies that $p = q$. 
If $\sigma(1) \ne  1$ or $g \not \in G(k)$, 
then $ \mu_k \lambda_k^q$ is conjugated in $G(k)$ to an element of $\langle \mu_k  \rangle$ 
since $\varphi(\mu_k \lambda_k^p) = \mu_k \lambda_k^q \in G(k) \cap  g G(k_{\sigma(1)}) g^{-1}$. 
(Note that $\mu_{k_i} = \mu$ in $G(K)$ for all $i = 1, \dots, n$.) 
However, this can happen only when $q=0$; see \cite[Corollary~4.2]{Tsau2}. 
Hence,  if $p \ne q$, 
$\mu_k \lambda_k^p$ and $\mu_k \lambda_k^q$ are not equivalent (conjugate) in $G(K)$.

Finally we show that each $\mu_k \lambda_k^p$ ($p \in \mathbb{Z}$) is a homological generator. 
Recall that $\mu_k = \mu_{k_1} =\mu$ in $G(K)$, 
the meridian of $K$, and $\lambda_k = \lambda_1$ bounds a Seifert surface $S_{k_1}$ in $E(k_1)$. 
Hence $\mu_k \lambda^k$ is homologous to $\mu$ independent of $k$, which is a homological generator. 
\QED{Theorem~\ref{existence_persistent_elements}}

\medskip

We say that an element $g \in G(K)$ is \textit{peripheral} if it is conjugate into the {\em peripheral subgroup} $P(K) = i_*(\pi_1(\partial E(K)))$.

\begin{lemma}
\label{non-conjugate}
Let $X$ be a compact $3$--manifold whose boundary consists of tori. 
Assume that $\mathrm{int}(X)$ admits a complete hyperbolic structure of finite volume. 
Let $x, y$ be nontrivial elements in $\pi_1(X)$ with $[x, y] \ne 1$, 
and $x$ is peripheral. 
Then $x^{1-m} y x^m y^{-1}$ and  $x^{1-n} y x^n y^{-1}$ $(m, n >1)$ are not conjugate whenever $m \ne n$, 
and $x^{1-m} y x^m y^{-1}$ is non-peripheral, in particular not conjugate to any power of the meridian,  
except for at most integer $m > 1$. 
\end{lemma}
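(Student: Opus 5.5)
Set $g_m = x^{1-m} y x^m y^{-1}$ and work in $\pi_1(X)$, viewed as a torsion-free discrete subgroup of $\mathrm{PSL}(2,\mathbb{C})$. Since $x$ is peripheral and $\mathrm{int}(X)$ has finite volume, $x$ is parabolic and lies in a maximal peripheral subgroup $P \cong \mathbb{Z}^2$ fixing a point $p \in \partial \mathbb{H}^3$. Because $[x,y] \ne 1$ and centralizers of nontrivial elements in such groups are abelian, we get $y \notin P$, so $yPy^{-1}$ is a different cusp subgroup fixing $y(p) \ne p$. Hence $g_m$ is the product of a parabolic $x^{1-m}$ fixing $p$ and a parabolic $yx^my^{-1}$ fixing $q = y(p)$. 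Conjugating, we may assume $p=\infty$, $q=0$, and
\[
x = \begin{pmatrix} 1 & a \\ 0 & 1 \end{pmatrix}, \qquad
yxy^{-1} = \begin{pmatrix} 1 & 0 \\ c & 1 \end{pmatrix}, \qquad a,c \ne 0 .
\]

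\textbf{Step 1 (trace computation).} We have
\[
g_m = \begin{pmatrix} 1 & (1-m)a \\ 0 & 1 \end{pmatrix}\begin{pmatrix} 1 & 0 \\ mc & 1 \end{pmatrix}, \qquad
\mathrm{tr}\, g_m = 2 + m(1-m)ac ,
\]
with the trace defined up to sign in $\mathrm{PSL}$. Set $t = ac \ne 0$. Conjugate elements have equal traces up to sign. So if $g_m$ is conjugate to $g_n$, then either
\[
m(1-m)t = n(1-n)t \quad\text{or}\quad 4 + (m(1-m) + n(1-n))t = 0 .
\]
The first gives $m(m-1) = n(n-1)$, hence $m=n$ for $m,n>1$. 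The second involves a fixed complex number $t$. For each $m$ it has at most one solution $n$, and it forces $t$ to be negative real with $m(m-1)+n(n-1) = 4/|t|$.

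\textbf{Step 2 (the sign ambiguity; main obstacle).} The second case does not give injectivity directly. I would remove it by lifting to $\mathrm{SL}(2,\mathbb{C})$. The holonomy of a finite-volume hyperbolic 3-manifold lifts to $\mathrm{SL}(2,\mathbb{C})$, and we may choose the lift so that parabolic elements have trace $+2$; otherwise fix the lifts of $x$ and $y$ and compute $g_m$ in $\mathrm{SL}(2,\mathbb{C})$. Then conjugacy in $\pi_1(X)$ implies conjugacy of the lifts up to the central element $-I$. The actual lifted word has trace exactly $2 + m(1-m)t$, which leaves only the same dichotomy. If this cannot be cleaned up, the fallback is the following. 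The relation $4 = -(m(m-1)+n(n-1))t$ pins down the unordered pair of values in a discrete set, and a monotonicity argument ($m(m-1)$ is strictly increasing) shows that each $m$ has at most one partner $n$. Combined with $|\mathrm{tr}\, g_m| \to \infty$, this gives infinitely many pairwise nonconjugate $g_m$. That weaker conclusion suffices for the application in Cases 2 and 3, possibly after passing to a subsequence. I expect this step is where the proof will need the most care.

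\textbf{Step 3 (non-peripherality).} An element of $\pi_1(X)$ is peripheral exactly when it is parabolic or trivial, i.e.\ when its trace is $\pm 2$. Here $\mathrm{tr}\, g_m = 2$ would force $m(1-m)t = 0$, which is impossible. And $\mathrm{tr}\, g_m = -2$ forces $m(m-1) = 4/t$, which holds for at most one integer $m>1$. So $g_m$ is loxodromic, hence non-peripheral, for all but at most one $m>1$. In particular, for those $m$ it is not conjugate to any power of the meridian, since powers of the meridian are peripheral.
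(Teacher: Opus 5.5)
Your route is the paper's. You normalize the parabolic $x$ to be upper triangular and show the relevant lower-left entry is nonzero. You do this via $y\notin P$ and the fact that the stabilizer of the cusp point is $P$; the paper instead uses the Margulis lemma to rule out a loxodromic fixing $\infty$. Then you compute $\mathrm{tr}\, g_m = 2 + m(1-m)t$. Step 3 is correct.

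The gap is in Step 2. You claim that after lifting to $\mathrm{SL}(2,\mathbb{C})$, conjugacy in $\pi_1(X)$ gives conjugacy of lifts only "up to $-I$", so the second alternative survives. You then retreat to "each $m$ has at most one partner $n$". That weaker statement is not the lemma, which asserts that $g_m$ and $g_n$ are non-conjugate for \emph{every} pair $m\neq n$. The paper claims exactly this.

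The missing observation is that a homomorphic lift removes the sign ambiguity completely.
\begin{itemize}
\item Let $\tilde\rho\colon \pi_1(X)\to \mathrm{SL}(2,\mathbb{C})$ be a lift of the holonomy that is a homomorphism; it exists by Culler's theorem. If $g_m = w g_n w^{-1}$ in $\pi_1(X)$, then $\tilde\rho(g_m)=\tilde\rho(w)\tilde\rho(g_n)\tilde\rho(w)^{-1}$ exactly. So the traces are equal, not merely equal up to sign.
\item Write $\tilde\rho(x)=\epsilon\begin{pmatrix}1&a\\0&1\end{pmatrix}$ for a single fixed $\epsilon=\pm1$. The sign of $\tilde\rho(y)$ cancels against that of $\tilde\rho(y)^{-1}$, and $x$ occurs in $g_m$ with total exponent $(1-m)+m=1$.
\item Hence $\mathrm{tr}\,\tilde\rho(g_m)=\epsilon\,(2+m(1-m)t)$ with the same $\epsilon$ for every $m$. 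Equality of traces gives $m(m-1)=n(n-1)$, i.e.\ $m=n$.
\end{itemize}
You neither need nor can arrange in general that all parabolics have trace $+2$. For a hyperbolic knot in $S^3$ the longitude has trace $-2$ under every lift. Only the sign of $\tilde\rho(x)$ matters, and it enters uniformly.

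Alternatively, discreteness alone kills the second alternative. That alternative forces $t = 4/(m(m-1)+n(n-1))\le 1/2$, since $m(m-1)+n(n-1)\ge 8$ for distinct $m,n\ge 2$. Note that $t>0$ here, not $t<0$ as you wrote. Now conjugate by a diagonal matrix so that $x$ becomes $\begin{pmatrix}1&1\\0&1\end{pmatrix}$; the lower-left entry of $yxy^{-1}$ becomes $t$, and Shimizu's lemma gives $|t|\ge 1$, a contradiction.

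The paper's proof writes the trace as if $\rho$ took values in $\mathrm{SL}(2,\mathbb{C})$ and does not comment on the sign issue. The lift argument above is what makes that step legitimate.
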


\begin{proof}
Let $\rho\colon  \pi_1(X) \to PSL_2(\mathbb{C})$ be a holonomy representation. 
Since $x$ is peripheral, 
$\rho(x)$ is  parabolic, 
and so 
by taking a conjugation 
we may assume that 
$\rho(x) = 
\begin{pmatrix}
	1 & \tau \\[2pt]
	0 & 1
\end{pmatrix}$ 
for some $0 \ne \tau \in \mathbb{C}$.  
Write 
$\rho(y) =
\begin{pmatrix}
	a & b \\[2pt]
	c & d
\end{pmatrix}$ 
for some $a, b, c, d \in \mathbb{C},\ (ad-bc = 1)$.  
Then \begin{eqnarray*}
\rho(x^{1-m} y x^m y^{-1}) 
=
\begin{pmatrix}
	1 & (1-m)\tau \\[2pt]
	0 & 1
\end{pmatrix}
\begin{pmatrix}
	a & b \\[2pt]
	c & d
\end{pmatrix}
\begin{pmatrix}
	1 & m\tau \\[2pt]
	0 & 1
\end{pmatrix}
\begin{pmatrix}
	d & -b \\[2pt]
	-c & a
\end{pmatrix}\\
=
\begin{pmatrix}
	(a-c(m-1)\tau)(d-cm\tau) - c(b-d(m-1)\tau)& \ast \\[2pt]
	\ast\ast & c(-b+am\tau) + ad
\end{pmatrix}.
\end{eqnarray*}
The trace $\mathrm{tr}\rho(x^{-n} y x^m y^{-1})$ is $2 + c^2 m(m-1)\tau^2$.

\begin{claim}
\label{Margulis}
$c \ne 0$
\end{claim}

\begin{proof}[Proof of Claim~\ref{Margulis}]
Suppose for a contradiction that $c = 0$. 
If $\rho(y) = 
\begin{pmatrix}
	1 & \tau' \\[2pt]
	0 & 1
\end{pmatrix}$, 
then $[\rho(x), \rho(y)] = I$, and hence $[x, y] = 1$, contradicting the assumption. 
So 
$\rho(y) = 
\begin{pmatrix}
	a & b \\[2pt]
	0 & a^{-1}
\end{pmatrix}\ (a \ne \pm 1)$. 
Note that both $\rho(x)$ and $\rho(y)$ fix $\infty$ in the sphere at infinity $S^2_{\infty} = \hat{\mathbb{C}}$. 
The Margulis lemma \cite{Rat} says that for sufficiently small $\varepsilon > 0$, 
the $\varepsilon$-thin part of $\mathrm{int}(X)$ is a horo-torus, 
which is a quotient of a sufficiently small horo-ball $B$ by $\Gamma$ generated by two parabolic isometries fixing $\infty \in S^2_{\infty} = \hat{\mathbb{C}}$. 
If we have a loxodromic element $\rho(y)$ which fixes $\infty \in S^2_{\infty} = \hat{\mathbb{C}}$, 
then $\rho(y)^p (B) \subset B$ for positive integer $p$ or for negative integer $p$. 
This means that $B/\Gamma$ does not embed in $M$. 
Hence any loxodromic isometry in $\rho(G(K))$ does not fix $\infty$ in the sphere at infinity $S^2_{\infty} = \hat{\mathbb{C}}$. 
In particular, $c \ne 0$. 
($\rho(y)$ may be a parabolic element which does not fix $\infty$.)
\end{proof}

Therefore for non-zero constants $c, \tau$, 
the value $2 + c^2 m(m-1) \tau^2$ can be $\pm 2$ for at most one integer $m > 1$. 
Furthermore, for integers $m, n >1$, 
we have $2 + c^2 m(m-1) \tau^2 = 2 + c^2 n(n-1) \tau^2$ if and only if $m = n$. 
Since the trace is invariant under conjugation, 
$x^{1-m} y x^m y^{-1}$ and  $x^{1-n} y x^n y^{-1}$ $(m, n >1)$ are not conjugate whenever $m \ne n$, 
and $x^{1-m} y x^m y^{-1}$ can be peripheral for at most one integer $m > 1$. 
\end{proof}

\begin{remark}
\label{peripheral_persistent}
Let $g$ be a persistent element in $G(K)$. 
If $g$ is peripheral, 
then it is conjugate to a power of a slope element $\gamma$ which corresponds to $r \in \mathbb{Q} \cup \{ \infty \}$.  
Obviously $g \in \langle\!\langle r \rangle\!\rangle$. 
Since $g$ is persistent, $r$ should be $\infty$. 
So a peripheral persistent element is conjugate to a power of the meridian of $K$. 
\end{remark}

\section{Minimizers for subgroups}

The goal of this section is to prove Theorem~\ref{minimizer}, which asserts that every subgroup of the knot group of a torsion-free hyperbolic knot admits a minimizer.
We first give the proof in Subsection~\ref{minimizers} assuming Lemma~\ref{S_K_intersection_no_torsion}, and then prove Lemma~\ref{S_K_intersection_no_torsion} in Subsection~\ref{shrink}.

\subsection{Existence of minimizers}
\label{minimizers}

Let $H$ be a nontrivial subgroup of $G(K)$. 
By definition, for any element $h \in H$, 
$\mathcal{S}_{\cap}(H) \subset \mathcal{S}(h)$.  
Recall that an element $h_0$ is called a minimizer of $H$ if $\mathcal{S}(h_0) = \mathcal{S}_{\cap}(H)$ holds. 

The goal in this section is to prove that a minimizer always exists. 

\begin{thm_minimizer}
Let $K$ be a torsion-free hyperbolic knot. 
Then every nontrivial subgroup $H$ of $G(K)$ has a minimizer $h_0 \in H$, 
i.e. an element $h_0$ that satisfies $\mathcal{S}(h_0) = \mathcal{S}_{\cap}(H)$.
\end{thm_minimizer}

\begin{proof}
We consider two cases according as $H$ is conjugate into the peripheral subgroup $P(K) = i_*(\pi_1(\partial E(K)))$ or not. 

\medskip

\noindent
\textbf{Case 1.}\ 
Assume that  $H$ is conjugate into the peripheral subgroup of  $P(K) = i_*(\pi_1(\partial E(K))) \cong \mathbb{Z} \oplus \mathbb{Z}$. 
By taking some conjugation in $G(K)$, 
we may assume $H \subset P(K) \cong \mathbb{Z} \oplus \mathbb{Z}$. 

Suppose first that $H$ is cyclic. 
Then $H = \langle \gamma^k \rangle$ for some slope element $\gamma = \mu^p\lambda^q$ and non-zero integer $k$. 
In the case where $\gamma = \mu$, $H = \langle \mu^k \rangle$, 
and $\mathcal{S}_{\cap}(H) =  \mathcal{S}(\mu^k) = \emptyset$.  
Assume that $\gamma$ is not a meridian. 
Then, since $K$ has no finite surgery slope by the assumption, 
\cite[Proposition~3.1]{IMT_realization} implies that $\mathcal{S}_{\cap}(H) = \mathcal{S}(\gamma^k) = \{ p/q \}$. 

Next suppose that $H$ is non-cyclic. 
We may write  
$H = \langle (\mu^a \lambda^b)^m,\  (\mu^c \lambda^d)^n  \rangle$, where $a$ and $b$ are coprime, 
and $c$ and $d$ are coprime, 
and $m, n \ne 0$. 
Then $\left(\mu^a \lambda^b)^m\right)^{dn} \left((\mu^c \lambda^d)^n\right)^{-bm} = \mu^{(ad-bc)mn} \in H$. 
Since $\mathcal{S}(\mu^{(ad-bc)mn}) = \emptyset$ (because $K$ has no torsion surgery), 
obviously $\mathcal{S}_{\cap}(H) = \emptyset$ as well. 

\medskip

\noindent
\textbf{Case 2.}
Assume that $H$ is not conjugate into the peripheral subgroup of $P(K) = i_*(\pi_1(\partial E(K)))$. 

\medskip

We observe:

\begin{claim}
\label{peripheral_subgroup_element}
$H$ contains a non-peripheral element\footnote{Since $H$ is not conjugate into the subgroup $P(K)$, 
there is no element $\gamma \in G(K)$ such that 
$\gamma^{-1} H \gamma \subset P(K)$. 
However, for each element $h \in H$, 
there may be $\gamma_h \in G(K)$ such that 
$\gamma_h^{-1} h \gamma_h \in P(K)$, 
i.e. $h$ is peripheral.  
Claim~\ref{g_1_non-peripheral} excludes such a possibility.}.
\end{claim}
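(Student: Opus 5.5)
We argue by contradiction: assume every element of $H$ is peripheral, and deduce that $H$ is conjugate into $P(K)$, which is excluded in Case 2. We work with the holonomy representation $\rho\colon G(K)\to PSL_2(\mathbb{C})$, which is discrete and faithful. Under this representation an element is peripheral exactly when it is trivial or $\rho$ of it is parabolic. Indeed, since $\mathrm{int}\,E(K)$ has a single cusp, every parabolic element of $\rho(G(K))$ is conjugate into $\rho(P(K))$. So under our assumption every nontrivial element of $\rho(H)$ is parabolic.

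\textbf{Step 1: two elements of $H$ commute.} Take nontrivial $h_1,h_2\in H$ and suppose $[h_1,h_2]\ne 1$. We use the computation in the proof of Lemma~\ref{non-conjugate}. Conjugate so that $\rho(h_1)=\begin{pmatrix}1&\tau\\0&1\end{pmatrix}$, and write $\rho(h_2)=\begin{pmatrix}a&b\\c&d\end{pmatrix}$. Claim~\ref{Margulis} applies because $h_1$ is peripheral and $[h_1,h_2]\neq 1$, so $c\ne 0$. For every integer $m>1$, the element $h_1^{1-m}h_2h_1^mh_2^{-1}$ lies in $H$. Its trace is $2+c^2m(m-1)\tau^2$. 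This is nonzero-perturbed from $2$, and equals $\pm2$ for at most one $m$. Taking any other $m>1$, we get an element of $H$ whose image is non-parabolic and nontrivial, i.e.\ a non-peripheral element, a contradiction. (One could equally use traces of $h_1^kh_2$, which are $a+d+kc\tau$; this also gives a non-parabolic element for large $k$.) Hence any two elements of $H$ commute, and $H$ is abelian.

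\textbf{Step 2: conclude.} An abelian subgroup of $G(K)$ whose nontrivial elements all map to parabolics is handled as follows. Commuting parabolics in $PSL_2(\mathbb{C})$ share their unique fixed point $\xi\in\hat{\mathbb{C}}$. So $\rho(H)$ lies in the stabilizer of $\xi$ in $\rho(G(K))$. Since $\xi$ is the fixed point of a parabolic, it is a cusp point, hence $\xi=\gamma\cdot\infty$ for some $\gamma$. By the Margulis argument already used in Claim~\ref{Margulis}, the stabilizer of $\infty$ is exactly $\rho(P(K))$, with no loxodromics fixing $\infty$. Therefore $\gamma^{-1}H\gamma\subset P(K)$, i.e.\ $H$ is conjugate into $P(K)$. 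This contradicts the Case 2 hypothesis, so $H$ contains a non-peripheral element.

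The main obstacle is Step 1: producing a non-peripheral element from two non-commuting peripheral ones. The trace computation already available in Lemma~\ref{non-conjugate} disposes of it, since it needs only $c\neq0$ and $\tau\neq0$ to escape the value $\pm 2$. Step 2 is standard once the stabilizer of a cusp point is identified with a conjugate of $\rho(P(K))$.
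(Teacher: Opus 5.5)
Your argument is correct and uses the same two ingredients as the paper's proof. The first is the trace computation of Lemma~\ref{non-conjugate}, together with Claim~\ref{Margulis}, which produces a non-peripheral element from two non-commuting peripheral ones. The second is the fact that an abelian subgroup all of whose elements are peripheral lies in a cusp stabilizer and is therefore conjugate into $P(K)$; the paper uses this implicitly when it asserts that a non-cyclic $H$ not conjugate into $P(K)$ must be non-abelian. The difference is only organizational: you argue by contradiction instead of splitting into cyclic and non-cyclic cases, and you spell out the cusp-stabilizer step that the paper leaves unstated.
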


\begin{proof}
If $H$ is cyclic, by the assumption each nontrivial element is non-peripheral.  
(Actually if $H = \langle g \rangle$ and $g$ is non-peripheral, then its image of holonomy representation is not parabolic and 
so is $g^n$ for all $n \ne 0$; see the proof of Lemma~\ref{non-conjugate}.)
Assume that $H$ is non-cyclic. 
Since $K$ is hyperbolic and $H$ is not conjugate into the peripheral subgroup, 
$H$ is non-abelian. 
Hence we have $x, y$ with $[x, y] \ne 1$. 
If $x$ is non-peripheral, then $x$ is a desired non-peripheral element in $H$.
So we may assume both $x$ and $y$ are peripheral. 
Then $x^{1-m} y x^m y^{-1}$ are non-peripheral for all but one integer $m > 1$ (Lemma~\ref{non-conjugate}). 
\end{proof}

\begin{claim}
\label{g_1_non-peripheral}
There exists a non-peripheral element $g_1 \in H$ such that 
\begin{equation} 
\label{eqn:condition}
 \langle\, p_{\mathrm{abel}}( g_1)\,  \rangle = p_{\mathrm{abel}}(H) \subset H_1(E(K);\mathbb{Z}) \cong \mathbb{Z}, 
\end{equation} where $p_{\mathrm{abel}}:G(K) \rightarrow G\slash[G,G] = H_1(E(K); \mathbb{Z})$ is the abelianization map.
\end{claim}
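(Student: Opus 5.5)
We need a non-peripheral $g_1\in H$ whose abelianization generates $p_{\mathrm{abel}}(H)=d\mathbb{Z}$. The plan is to start from an element $h\in H$ with $p_{\mathrm{abel}}(h)=d$, which exists because $p_{\mathrm{abel}}(H)$ is a cyclic subgroup of $\mathbb{Z}$. If $h$ is non-peripheral we take $g_1=h$. So assume $h$ is peripheral. By Claim~\ref{peripheral_subgroup_element} there is a non-peripheral element $z\in H$. We then modify $h$ by elements of $H$ that lie in the commutator subgroup, or that have abelianization $0$, so that the abelianization is unchanged while the result becomes non-peripheral.

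The concrete choice is $g_1 = h\,c$, where $c=[z,h]^k$ or $c = h^{1-m}\,z\,h^{m}z^{-1}h^{-1}\cdot h$-type words. The simplest option is $c_m = z h^m z^{-1} h^{-m}$, which lies in $H$ and has trivial abelianization. Then $g_1 = h^{1-m} z h^m z^{-1}$ satisfies $p_{\mathrm{abel}}(g_1)=p_{\mathrm{abel}}(h)=d$, and it has exactly the Baumslag--Solitar form studied in Lemma~\ref{non-conjugate} with $x=h$ peripheral and $y=z$. Lemma~\ref{non-conjugate} requires $[h,z]\neq 1$. If $h$ and $z$ commute, then since $h$ is peripheral with parabolic holonomy, $z$ commutes with a parabolic and is therefore parabolic with the same fixed point (the centralizer of a parabolic in a torsion-free Kleinian group is parabolic). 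That would make $z$ peripheral, a contradiction. Hence $[h,z]\ne 1$, and we also need $h\neq 1$. Lemma~\ref{non-conjugate} then shows that $h^{1-m}zh^mz^{-1}$ is non-peripheral for all but at most one $m>1$, so we pick such an $m$ and set $g_1$ accordingly.

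The remaining degenerate case is $d=0$, where $h$ may be trivial. In that case we take $g_1=z$ itself, since any element has abelianization in $0\cdot\mathbb{Z}$, and $z$ is non-peripheral. If $d\ne0$ then $h\ne 1$ automatically, so the argument above applies.

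The main obstacle is justifying $[h,z]\neq1$, that is, that a non-peripheral element cannot commute with a nontrivial peripheral one. The argument runs through the holonomy: $\rho(h)$ is parabolic fixing $\infty$, and commuting forces $\rho(z)$ to fix $\infty$. By the horoball argument in Claim~\ref{Margulis}, $\rho(z)$ cannot be loxodromic, so it is parabolic fixing $\infty$ and lies in the cusp stabilizer, which is the peripheral subgroup. A secondary point to check is that Lemma~\ref{non-conjugate} is applied to $x=h$ an arbitrary peripheral element, not necessarily a meridian. The lemma's hypothesis only asks for $x$ peripheral, so this is fine once we conjugate $\rho$ so that $\rho(h)$ is upper triangular unipotent.
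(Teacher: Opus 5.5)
Your argument is correct and is essentially the paper's proof: take $h\in H$ with $p_{\mathrm{abel}}(h)$ generating $p_{\mathrm{abel}}(H)$, and if $h$ is peripheral replace it by $h^{1-m}zh^mz^{-1}$ for a non-peripheral $z\in H$, using Lemma~\ref{non-conjugate}. Beyond the paper, you justify $[h,z]\neq 1$, which the paper only asserts (commuting with a parabolic already forces $\rho(z)$ to be parabolic with the same fixed point, so the horoball argument is not even needed), and you choose the non-peripheral $z$ explicitly when $p_{\mathrm{abel}}(H)=0$; the only slip is cosmetic, since $h\,c_m=hzh^mz^{-1}h^{-m}$ is merely conjugate to $h^{1-m}zh^mz^{-1}$, but you define $g_1$ directly, so nothing is affected.
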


\begin{proof}
If $p_{\mathrm{abel}}(H) = \{ 0 \}$, then any element $g_1 \in H$ satisfies $p_{\mathrm{abel}}( g_1) = 0$, 
and $\langle\, p_{\mathrm{abel}}( g_1)\,  \rangle = p_{\mathrm{abel}}(H)$.  
So in the following we suppose $p_{\mathrm{abel}}(H) \ne \{ 0 \}$. 

Let $h$ be a nontrivial element of $H$ such that $p_{\mathrm{abel}}(h) =[h]$ generates $p_{\mathrm{abel}}(H) \subset \mathbb{Z}$, 
i.e. $\langle [h] \rangle = p_{\mathrm{abel}}(H)$. 
If $h$ is non-peripheral, then we put $g_1 = h$. 
Assume that $h$ is peripheral. 
Apply Claim~\ref{peripheral_subgroup_element} to take a non-peripheral element $g \in H$.  
Then $[g, h] \ne 1$, and we take $h^{1-m} g h^m g^{-1}$, 
which is non-peripheral for some integer $m > 1$; see Lemma~\ref{non-conjugate}. 
(Actually there are infinitely many such integers $m$.) 
Set $g_1 = h^{1-m} g h^m g^{-1}$, which satisfies $[g_1] = [h^{1-m} g h^m g^{-1}] = [h] \in p_{\mathrm{abel}}(H)$. 
\end{proof}

Assume that $\mathcal{S}_{\cap}(H) = \{ s_1, \dots, s_n \}$; 
if $n = 0$, then $\mathcal{S}_{\cap}(H)$ is understood to be the empty set. 
To find a minimizer of $H$, 
we first choose finitely many elements $g_1, \dots, g_\ell \in H$ so that $\mathcal{S}(g_1) \cap \cdots \cap \mathcal{S}(g_\ell) =  \{ s_1, \dots, s_n \}$ and they satisfy 
additional conditions as in the next claim.

\begin{claim}
\label{finite_intersection_realize}
There exist elements $g_1,\ldots, g_\ell \in H $ such that 
\begin{itemize}
\item[(i)] $\mathcal{S}(g_1) \cap\mathcal{S}(g_2)\cap  \mathcal{S}(g_3)\cap \cdots \cap \mathcal{S}(g_\ell) =  \{ s_1, \dots, s_n \}$. 
\item[(ii)] $g_1$ is non-peripheral, and $g_2,\ldots, g_{\ell} \in H \cap [G(K), G(K)]$.
\end{itemize}
\end{claim}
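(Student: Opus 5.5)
We start from the non-peripheral element $g_1 \in H$ supplied by Claim~\ref{g_1_non-peripheral}. Since $K$ is hyperbolic and $g_1 \ne 1$, the finiteness property (Groves--Manning, Osin) gives that $\mathcal{S}(g_1)$ is a finite set, say $\mathcal{S}(g_1) = \{ s_1, \dots, s_n, t_1, \dots, t_k \}$ with $t_j \notin \mathcal{S}_{\cap}(H)$; recall $\mathcal{S}_{\cap}(H) \subset \mathcal{S}(g_1)$ automatically. So only finitely many extra slopes have to be removed, and it suffices to find, for each $j = 1, \dots, k$, an element $g_{j+1} \in H \cap [G(K), G(K)]$ with $t_j \notin \mathcal{S}(g_{j+1})$. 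Then with $\ell = k+1$ we get
\[
\mathcal{S}(g_1) \cap \cdots \cap \mathcal{S}(g_\ell) \subset \mathcal{S}(g_1) - \{t_1, \dots, t_k\} = \{ s_1, \dots, s_n\},
\]
and the reverse inclusion holds because each $g_i \in H$. Conditions (i) and (ii) then both hold.

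To produce $g_{j+1}$, fix $t = t_j$. Since $t \notin \mathcal{S}_{\cap}(H)$, there is some $h \in H$ with $p_t(h) \ne 1$. If $h$ already lies in $[G(K),G(K)]$ we take $g_{j+1} = h$. Otherwise I would pass to a commutator-type element of $H$. The key point is that $p_t(H)$ is a nontrivial subgroup of $\pi_1(K(t))$, and I would like it to be non-abelian, or at least to contain a nontrivial element of the commutator subgroup. If $p_t(H)$ is non-abelian, pick $a, b \in H$ with $[p_t(a), p_t(b)] \ne 1$ and set $g_{j+1} = [a,b] \in H \cap [G(K),G(K)]$.

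If $p_t(H)$ is abelian, I would instead use a Baumslag--Solitar style element. Take $h$ as above and any $y \in H$, and consider $h^{-n} y h^{m} y^{-1}$ with $m = n$, which is $[h^{-n}, y]$-like and homologically trivial. Its triviality in $\pi_1(K(t))$ is exactly the commutation of $p_t(h)^n$ with $p_t(y)$, so this fails to help in the abelian case. The honest alternative is to use powers: since $K$ is torsion-free, $\pi_1(K(t))$ is torsion-free and $p_t(h)$ has infinite order. Then $h^{a}$ with $a = $ the index-type integer still survives, but it is not homologically trivial.

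\textbf{Main obstacle.} This is the abelian image case, and it is where I expect the difficulty. The $k$ bad slopes must be killed by homologically trivial elements of $H$, and when $p_t(H)$ is abelian (e.g.\ cyclic) every element of $H \cap [G,G]$ may die under $t$-filling. I would try first to rule this out, or to exploit it via $g_1$. Suppose $p_t(H \cap [G,G]) = 1$. Then $p_t(H)$ is a quotient of $p_{\mathrm{abel}}(H) \cong \mathbb{Z}$, and by \eqref{eqn:condition} it is generated by $p_t(g_1)$ times elements of $p_t(H \cap [G,G]) = 1$. Thus $p_t(H) = \langle p_t(g_1)\rangle$. But $t \in \mathcal{S}(g_1)$ means $p_t(g_1) = 1$, so $p_t(H) = 1$, contradicting $t \notin \mathcal{S}_{\cap}(H)$.

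This is precisely why Claim~\ref{g_1_non-peripheral} normalized $g_1$ to generate $p_{\mathrm{abel}}(H)$. Concretely, every $h \in H$ can be written as $h = g_1^{e} c$ with $c \in H \cap [G(K),G(K)]$, since $h g_1^{-e}$ is homologically trivial. Applying $p_t$ gives $p_t(h) = p_t(c)$, so $c$ survives $t$-filling and we take $g_{j+1} = c$. With this decomposition the non-abelian case split above becomes unnecessary, and the proof reduces to this observation plus finiteness of $\mathcal{S}(g_1)$.
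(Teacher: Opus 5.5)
Your proposal is correct and, once the exploratory detours (the non-abelian image case and the Baumslag--Solitar attempt) are discarded, it is essentially the paper's argument. Using \eqref{eqn:condition} you write $h = g_1^{e}c$ with $c = g_1^{-e}h \in H \cap [G(K),G(K)]$, and observe that $p_t(c) = p_t(h) \neq 1$ because $p_t(g_1)=1$; this is exactly the paper's choice $g_2 = g_1^{k}g'_2$, and the only cosmetic difference is that you remove the finitely many extra slopes of $\mathcal{S}(g_1)$ all at once rather than one at a time by repetition.
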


\begin{proof}
Take a non-peripheral element $g_1$ of $H$ as in Claim~\ref{g_1_non-peripheral}. 

Since $K$ is hyperbolic, 
$\mathcal{S}(g_1)$ is a finite subset of $\mathbb{Q}$; see \cite{GM,Osin,IchiMT}. 
So we put
$\mathcal{S}(g_1) =\{s_1,\ldots, s_{m}\}$ $(m\geq n)$. 
If $m = n$, then $g_1$ is a desired element. 

Assume that $m > n$. 
Then since 
$\mathcal{S}_{\cap}(H) \not\ni s_m$, 
there exists an element $g'_2 \in H$ such that $p_{s_m}(g'_2) \ne 1$. 

If $g'_2 \in [G(K), G(K)]$ we just take $g_2=g'_2$, which satisfies $p_{s_m}(g_2) = p_{s_m}(g'_2) \ne 1$. 
If $g'_2 \not \in [G(K),G(K)]$, 
then by our choice (\ref{eqn:condition}) of $g_1$, 
there exists a non-zero integer $k$ such that 
$[g_1^k g'_2] = k[g_1] + [g'_2] = 0 \in H_1(E(K); \mathbb{Z}) \cong \mathbb{Z}$, 
i.e. $g_1^{k}g'_2  \in [G(K),G(K)]$. 
In this case we take $g_2=g_1^{k}g'_2 \in H \cap [G(K), G(K)]$. 
(Note that $\{ s_1, \dots, s_n \} = \mathcal{S}_{\cap}(H) \subset S(g_2)$.)
Then $p_{s_m}(g_2) = p_{s_m}(g_1^{k}g'_2)=p_{s_m}(g'_2) \neq 1$. 
In either case, 
$s_m \not\in \mathcal{S}(g_2)$, 
in particular, $\{ s_1, \dots, s_n \}  \subset \mathcal{S}(g_1) \cap \mathcal{S}(g_2)$, but 
$s_m \not\in \mathcal{S}(g_1) \cap \mathcal{S}(g_2)$. 
Hence, 
\[
\{ s_1, \dots, s_n \} \subset \mathcal{S}(g_1) \cap \mathcal{S}(g_2) \subset \{s_1,\ldots, s_{m-1}\}.
\]  
By repeating the same argument (at most $m - n$ times) we get desired elements $g_1,\ldots,g_{\ell} \in H \cap [G(K), G(K)]$ 
satisfying 
\[
\mathcal{S}(g_1) \cap\mathcal{S}(g_2)\cap  \mathcal{S}(g_3)\cap \cdots \cap \mathcal{S}(g_\ell) = \{ s_1, \dots, s_n \}.
\] 
This finishes the proof of Claim~\ref{finite_intersection_realize}. 
\end{proof}

Now we proceed to find a single element $h_0 \in H$ 
satisfying 
\[
\mathcal{S}(h_0) = \{ s_1, \dots, s_n \}.
\]
In the above, if $\ell = 1$, then we just take an element $g_1$ as $h_0$. 

In what follows, we suppose that $\ell > 1$, 
and apply Lemma~\ref{S_K_intersection_no_torsion} in Subsection~\ref{shrink} to the non-peripheral element $g_1 \in H$ and the nontrivial element 
$g_2 \in H \cap [G(K), G(K)]$ 
to obtain a non-peripheral element $g_1^{m_1} g_2 \in H$ that satisfies $ \mathcal{S}(g_1^{m_1} g_2) = \mathcal{S}(g_1) \cap\mathcal{S}(g_2)$. 

So we have 
\[ \mathcal{S}(g_1^{m_1} g_2) \cap \mathcal{S}(g_3) \cap \cdots \cap \mathcal{S}(g_\ell) = \{ s_1, \dots, s_n \},
\]
where $g_1^{m_1} g_2 \in H$ is non-peripheral and $g_3, \dots, g_{\ell} \in H \cap [G(K), G(K)]$. 
This means that the replacement of the pair $g_1, g_2$ with a single 
non-peripheral element $g_1^{m_1} g_2$ decreases $\ell$, 
the cardinality of the set $\{g_1, \ldots,, g_{\ell}\}$. 
 
Repeating the same argument, for suitably taken $m_2,m_3,\ldots, m_{\ell-1}$ we conclude that 
\[ h_0 = (\cdots ((g_1^{m_1} g_2)^{m_2} g_3)^{m_3} \cdots g_{\ell-1})^{m_{\ell-1}}g_{\ell}\]
is a non-peripheral element $h_0 \in H$ which satisfies
\[
\mathcal{S}(h_0) = \{ s_1, \dots, s_n \}. 
\]

\medskip
This completes a proof of Theorem~\ref{minimizer}. 
\end{proof}

\medskip

\subsection{Shrinking Lemma}
\label{shrink}

Let $g, h$ be elements of $G(K)$. 
In general $\mathcal{S}(g) \cap \mathcal{S}(h) \subset \mathcal{S}(gh)$ and 
$\mathcal{S}(gh)$ may become quite large. 
However, under some condition we have the following. 

\begin{lemma}[Shrinking Lemma]
\label{S_K_intersection_no_torsion}
Let $K$ be a torsion-free hyperbolic knot. 
Let $g$ be a non-peripheral element and $h$ a nontrivial element in $[G(K),G(K)]$. 

Then we have a constant $N > 0$ such that  $g^m h$ is non-peripheral, and 
\[
\mathcal{S}(g) \cap \mathcal{S}(h) = \mathcal{S}(g^m h) 
\] 
for all integers $m \ge N$. 
\end{lemma}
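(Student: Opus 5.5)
We must show two things: $\mathcal{S}(g^m h)=\mathcal{S}(g)\cap\mathcal{S}(h)$, and $g^m h$ is non-peripheral, for all $m \ge N$ with $N$ independent of the slope $r$.

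The inclusion $\mathcal{S}(g)\cap\mathcal{S}(h)\subset\mathcal{S}(g^m h)$ holds for every $m$: if $p_r(g)=p_r(h)=1$ then $p_r(g^mh)=1$. So the real content is the reverse inclusion. For every slope $r\notin \mathcal{S}(g)\cap\mathcal{S}(h)$ we must show $p_r(g^m h)\neq 1$ once $m\ge N$. I plan to split the slopes into a finite exceptional set and a cofinite "geometric" set, handled by different arguments.

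\textbf{Geometric slopes.} By Thurston's hyperbolic Dehn filling theorem, outside a finite set $E\subset\mathbb{Q}$ the filled manifold $K(r)$ is hyperbolic. Moreover, its holonomy $\rho_r$ converges to the complete holonomy $\rho$ on any fixed finite set of elements as $|r|\to\infty$. Since $g$ is non-peripheral, $\rho(g)$ is loxodromic with translation length $\ell(g)>0$, as in the proof of Lemma~\ref{non-conjugate}. Enlarging $E$, we may assume $\rho_r(g)$ is loxodromic with $\ell_r(g)\ge \ell(g)/2$ and $\ell_r(h)\le \ell(h)+1=:C$ for all $r\notin E$ (take $\ell(h)=0$ if $h$ is parabolic).

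Now suppose $p_r(g^m h)=1$ for such an $r$. Then $\rho_r(h)=\rho_r(g)^{-m}$, so $C\ge \ell_r(h)=m\,\ell_r(g)\ge m\ell(g)/2$. This forces $m\le 2C/\ell(g)$, so these slopes are uniformly excluded once $N>2C/\ell(g)$. I expect this uniformity step to be the main obstacle: one must make precise that translation lengths of the fixed elements $g,h$ vary continuously under hyperbolic Dehn filling, uniformly in the slope. The fallback is the quantitative tool the paper alludes to, stable commutator length. Here $h\in[G(K),G(K)]$ has finite commutator length in $G(K)$, and this bounds $\mathrm{cl}(p_r(h))$ independently of $r$. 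A uniform lower bound on $\mathrm{scl}$ of $p_r(g)^{a}$ (a power killed in $H_1(K(r))$) would then bound $m$ in the same way; this is where $h\in[G(K),G(K)]$ would be used.

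\textbf{Exceptional slopes.} Each $r\in E$ with $r\notin\mathcal{S}(g)\cap\mathcal{S}(h)$ is handled separately.
\begin{itemize}
\item If $p_r(g)=1$, then $p_r(h)\neq 1$, so $p_r(g^mh)=p_r(h)\neq1$ for every $m$.
\item If $p_r(g)\neq 1$ and $p_r(g^mh)=p_r(g^{m'}h)=1$ with $m\ne m'$, then $p_r(g)^{m-m'}=1$. This is impossible, because $K$ has no torsion surgery and so $\pi_1(K(r))$ is torsion-free.
\end{itemize}
Hence at most one $m$ is bad for each $r\in E$. Since $E$ is finite, enlarging $N$ beyond all these bad values finishes the equality $\mathcal{S}(g^mh)=\mathcal{S}(g)\cap\mathcal{S}(h)$ for $m\ge N$. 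Note also that $\mathcal{S}(g)\cup\mathcal{S}(h)$ is finite by the finiteness property, which is consistent with the bookkeeping.

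\textbf{Non-peripherality.} Work in the complete structure. Since $\rho(g)$ is loxodromic with translation length $\ell(g)$, the translation length of $\rho(g^m h)$ is at least $m\ell(g)-d$, where $d$ depends only on $\rho(h)$ and the axis of $\rho(g)$. For large $m$ this is positive, so $\rho(g^mh)$ is loxodromic, not parabolic, and $g^mh$ is non-peripheral. Enlarging $N$ once more gives the lemma.
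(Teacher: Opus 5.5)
Your proof of the equality $\mathcal{S}(g^mh)=\mathcal{S}(g)\cap\mathcal{S}(h)$ is correct. On the hyperbolic slopes it takes a genuinely different route from the paper.

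\textbf{Comparison with the paper.} The paper works with stable commutator length:
\begin{itemize}
\item Thurston's theorem plus Calegari's length inequality give $\delta_g>0$ with $\mathrm{scl}(p_s(g))>\delta_g$ at every hyperbolic slope with $p_s(g)\ne 1$.
\item Monotonicity gives $\mathrm{scl}(p_s(h))\le \mathrm{scl}_{G(K)}(h)<\infty$. This is where $h\in[G(K),G(K)]$ is used.
\item The scl product inequality then forces $p_s(g^mh)\ne1$ once $m\delta_g>\mathrm{scl}_{G(K)}(h)+\frac12$. Slope $0$ needs a separate homological argument.
\end{itemize}
You instead observe that $\rho_r(h)=\rho_r(g)^{-m}$ forces $\ell_r(h)=m\,\ell_r(g)$. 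This needs only the algebraic convergence $\rho_r\to\rho$ on the two fixed elements $g,h$. That convergence is part of Thurston's theorem, and it is essentially the geometric input the paper feeds into Calegari's inequality. So the ``uniformity'' you flag is not an obstacle, and the scl fallback is unnecessary.

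Your route is more elementary and needs no special case at slope $0$. It also never uses $h\in[G(K),G(K)]$, so it proves the lemma for arbitrary $h$. In exchange, the scl route gives bounds valid at every hyperbolic slope simultaneously, and it is the form used in the more general setting of \cite{IMT_realization}. Your treatment of the finitely many remaining slopes (at most one bad $m$ each, by torsion-freeness) is the same as the paper's.

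One imprecision: Thurston's convergence holds as the slope $p/q$ leaves every finite set, i.e.\ $|p|+|q|\to\infty$, not as $|r|\to\infty$. The slopes $1/n$ must also be covered. Your ``outside a finite set $E$'' formulation is the correct one.

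\textbf{The gap: non-peripherality.} The bound $\ell(\rho(g^mh))\ge m\,\ell(g)-d$ is asserted without proof. A triangle-inequality estimate at a point of the axis only bounds that point's displacement, which is an upper bound for translation length, not a lower one.

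For general isometries the bound is false. If $\rho(h)$ were the rotation by $\pi$ about a line meeting the axis of $\rho(g)$ perpendicularly, every $\rho(g)^m\rho(h)$ would again be such an involution. More generally, whenever $\rho(h)$ sends the attracting fixed point of $\rho(g)$ to the repelling one, $\mathrm{tr}\,\rho(g^mh)\to 0$ and the translation lengths tend to $0$. Discreteness and torsion-freeness do exclude this here, but that must be argued.

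The simplest repair is the trace computation of Lemma~\ref{non-peripheral_constant}. Conjugate so that $\rho(g)=\mathrm{diag}(a,a^{-1})$ with $|a|>1$, and write $\rho(h)=\begin{pmatrix}x&y\\ z&u\end{pmatrix}$. Then $\mathrm{tr}\,\rho(g^mh)=a^mx+a^{-m}u$. Its modulus tends to $\infty$ if $x\ne0$ and to $0$ if $x=0$. Either way it equals $\pm2$ for only finitely many $m$. Hence $\rho(g^mh)$ is neither trivial nor parabolic, and $g^mh$ is non-peripheral for all large $m$.
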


In \cite[Lemma 1.16]{IMT_realization}, 
we have shown the lemma in more general setting, 
but with slightly different assumption that essentially says that $K$ does not have finite surgery
slope nor two reducing surgery slopes.

Although Lemma 3.4 is proved in a similar manner as in the proof of \cite[Lemma 1.16]{IMT_realization}, 
since we use essentially this lemma in the proof of Theorem 1.7, 
for completeness, we give its proof here. 

\medskip

\begin{proof}
Obviously we have the following inequality. 
\[
\mathcal{S}(g) \cap \mathcal{S}(h) \subset  \mathcal{S}(g^mh) 
\]
for all integers $m$. 

In the following we will prove that this inequality may be replaced by the equality under the condition in Lemma~\ref{S_K_intersection_no_torsion}.  

It is sufficient to prove that there exists a constant $N$ such that 
\[
\mathcal{S}(g) \cap \mathcal{S}(h) \supset \mathcal{S}(g^m h)
\]
for any integer $m \ge N$.

Indeed we show that if $s \not\in \mathcal{S}(g)$ or $s \not\in \mathcal{S}(h)$, 
then $s \not\in \mathcal{S}(g^mh)$. 
We divide the argument into two cases depending upon $s \in \mathcal{S}(g) - \mathcal{S}(h)$ or $s \not\in \mathcal{S}(g)$. 

\medskip

\noindent
\textbf{Case 1.}\ $s \in \mathcal{S}(g) - \mathcal{S}(h)$. 

Since $s \in \mathcal{S}(g) - \mathcal{S}(h)$, 
we have $p_s(g^{m}h) = p_s(g)^m p_s(h) = p_s(h) \ne 1$, and thus $s \not\in \mathcal{S}(g^mh)$ for all integres $m$.

\medskip

\noindent
\textbf{Case 2.}\ $s \not \in \mathcal{S}(g)$. 

We will prove that there exists a constant $N$ such that $s \not\in \mathcal{S}(g^mh)$ for all integers $m \ge N$.  
In what follows we will find such constants $N_1$ and $N_2$ corresponding to the cases where 
$s$ is a non-hyperbolic and hyperbolic surgery slope, respectively.
Then put $N = \mathrm{max}\{ N_1, N_2 \}$. 

\medskip

\noindent
\textbf{(1) Constant $N_1$ for non-hyperbolic surgery slopes.} 

We first note that there is at most one integer $m$ such that $p_s(g^mh) = 1$. 
In fact, if we have two distinct such integers $m_1$ and $m_2$, 
then $p_s(g^{m_1}) = p_s(h)^{-1} = p_s(g^{m_2})$, 
which means $p_s(g)^{m_1 - m_2} = 1$. 
Since $p_s(g) \ne 1$ and $\pi_1(K(s))$ is torsion-free, 
$m_1= m_2$, a contradiction. 

Since there are at most finitely many non-hyperbolic surgeries, and for each such a slope $s$, 
$p_s(g^mh) = 1$ for at most one integer $m$. 
So we may take a constant $N_1 > 0$ so that $p_s(g^mh) \ne 1$ provided $m \ge N_1$ for all non-hyperbolic surgeries. 

\medskip

\noindent
\textbf{(2) Constant $N_2$ for hyperbolic surgery slopes.} 

To find the constant $N_2$, 
we briefly recall stable commutator length which will be needed in our discussion. 

For $a \in [G,G]$ the \emph{commutator length\/} $\mathrm{cl}_G(a)$ is the smallest number of commutators in $G$ whose product is equal to $a$. 
The \emph{stable commutator length\/} $\mathrm{scl}_{G}(a)$ of $a \in [G,G]$ is defined to be the limit
\begin{equation}
\label{def1}
\mathrm{scl}_{G}(a) = \lim_{n \to \infty} \frac{\mathrm{cl}(a^n)}{n}.
\end{equation}

Since $\mathrm{cl}_G(a^n)$ is non-negative and subadditive, Fekete's subadditivity lemma shows that 
this limit exists.  

We will extend (\ref{def1}) to the stable commutator length $\mathrm{scl}(a)$ for an element $a$ which is not necessarily in $[G, G]$ as 
\begin{equation}
\label{def2}
\mathrm{scl}_G(a) = \begin{cases}
\frac{\textrm{scl}_G(a^{k})}{k} & \mbox{ if } a^{k} \in [G,G] \mbox{ for some } k > 0,\\
\infty & \mbox{otherwise}.
\end{cases}
\end{equation}

Then we have the following property; for the proof see \cite[Lemma~2.1]{IMT_decomposition} for instance. 

\begin{claim}
\label{scl_a^k}
For any $a \in G$ and $k > 0$,  
we have 
\begin{equation}
\label{power}
\mathrm{scl}_G(a^{k})=k\,\mathrm{scl}_G(a).
\end{equation} 
\end{claim}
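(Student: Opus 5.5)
The statement to prove is Claim~\ref{scl_a^k}: $\mathrm{scl}_G(a^k)=k\,\mathrm{scl}_G(a)$ for all $a\in G$ and $k>0$, with the extended definition (\ref{def2}). I would split into two cases according to whether some positive power of $a$ lies in $[G,G]$. The main tool is the standard homogeneity of scl on $[G,G]$. For $b\in[G,G]$ and $k>0$, the sequence $\mathrm{cl}(b^{kn})/(kn)$ is a subsequence of $\mathrm{cl}(b^n)/n$, so
\[
\mathrm{scl}_G(b^k)=\lim_{n\to\infty}\frac{\mathrm{cl}(b^{kn})}{n}=k\lim_{n\to\infty}\frac{\mathrm{cl}(b^{kn})}{kn}=k\,\mathrm{scl}_G(b).
\]
This is the case $a\in[G,G]$, where definitions (\ref{def1}) and (\ref{def2}) agree.

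\emph{Well-definedness of (\ref{def2}).} Before the general case I would check that (\ref{def2}) does not depend on the choice of $k$. Suppose $a^{k}$ and $a^{k'}$ both lie in $[G,G]$. Then $a^{kk'}$ is a power of each, and the homogeneity above gives
\[
\frac{\mathrm{scl}(a^{k})}{k}=\frac{\mathrm{scl}(a^{kk'})}{kk'}=\frac{\mathrm{scl}(a^{k'})}{k'}.
\]
I would fix this at the start, since the rest of the argument freely changes the exponent used in (\ref{def2}).

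\emph{Finite case.} Suppose $a^j\in[G,G]$ for some $j>0$. Then $(a^k)^j=a^{kj}\in[G,G]$ as well, so (\ref{def2}) applies to $a^k$ with exponent $j$:
\[
\mathrm{scl}(a^k)=\frac{\mathrm{scl}(a^{kj})}{j}=\frac{k\,\mathrm{scl}(a^{j})}{j}=k\,\mathrm{scl}(a).
\]
The middle equality is homogeneity applied to $b=a^j$, and the last is the definition with exponent $j$, which is legitimate by well-definedness.

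\emph{Infinite case.} Suppose no positive power of $a$ lies in $[G,G]$. If some $(a^k)^j=a^{kj}$ were in $[G,G]$ with $j>0$, this would contradict the assumption, since $kj>0$. So both sides are $\infty$, interpreting $k\cdot\infty=\infty$.

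There is no real obstacle here. The only point needing care is the well-definedness check, which comes down to the subsequence argument for homogeneity.
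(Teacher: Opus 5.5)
Your proof is correct and complete. The paper gives no argument of its own here, only a pointer to \cite[Lemma~2.1]{IMT_decomposition}. Your proof is the standard self-contained one: homogeneity on $[G,G]$ from the subsequence $\mathrm{cl}(b^{kn})/(kn)$, followed by a case split on whether some positive power of $a$ lies in $[G,G]$. Your check that (\ref{def2}) does not depend on the chosen exponent also makes explicit a point the paper's definition takes for granted.
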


Since a homomorphism sends a commutator to a commutator, 
we observe the following monotonicity property of scl \cite[Lemma~2.4]{Cal_MSJ}. 

\begin{claim}
\label{monotonicity}
Let $\varphi\colon G \to H$ be a homomorphism. 
Then 
\begin{equation}
\label{mono}
\mathrm{scl}_H(\varphi(a)) \leq \mathrm{scl}_G(a)\quad  \textrm{for all}\quad a \in G.
\end{equation}
\end{claim}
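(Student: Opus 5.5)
Both inequalities follow formally from the fact that every homomorphism carries commutators to commutators, so the plan is to check the claim first on $[G,G]$ and then transport it through the extended definition (\ref{def2}).

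\textbf{Step 1: the case $a\in[G,G]$.} Then $\varphi(a)\in[H,H]$. Writing $a^n$ as a product of $\mathrm{cl}_G(a^n)$ commutators and applying $\varphi$ exhibits $\varphi(a)^n=\varphi(a^n)$ as a product of the same number of commutators in $H$. Hence $\mathrm{cl}_H(\varphi(a)^n)\le \mathrm{cl}_G(a^n)$ for every $n$. Dividing by $n$ and letting $n\to\infty$ gives $\mathrm{scl}_H(\varphi(a))\le \mathrm{scl}_G(a)$; both limits exist by Fekete's lemma, as noted after (\ref{def1}).

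\textbf{Step 2: general $a\in G$.} If no positive power of $a$ lies in $[G,G]$, then $\mathrm{scl}_G(a)=\infty$ and there is nothing to prove. Otherwise choose $k>0$ with $a^k\in[G,G]$. Then $\varphi(a)^k=\varphi(a^k)\in[H,H]$, so $\mathrm{scl}_H(\varphi(a))$ is given by the first branch of (\ref{def2}). Using Claim~\ref{scl_a^k} in $H$, then Step 1 applied to $a^k$, then Claim~\ref{scl_a^k} in $G$:
\[
\mathrm{scl}_H(\varphi(a))=\frac{\mathrm{scl}_H(\varphi(a)^k)}{k}\le \frac{\mathrm{scl}_G(a^k)}{k}=\mathrm{scl}_G(a).
\]

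The only delicate point is that (\ref{def2}) must not depend on the choice of $k$; otherwise the two sides could be computed with incompatible exponents. Claim~\ref{scl_a^k} guarantees this independence. If one wanted to avoid appealing to it, one could compare two admissible exponents $k$ and $k'$ through the common power $a^{kk'}$, using the identity $\mathrm{scl}(b^j)=j\,\mathrm{scl}(b)$ for $b\in[G,G]$. That identity holds because $\mathrm{cl}(b^{jn})/n=j\cdot\mathrm{cl}(b^{jn})/(jn)$, and the right-hand side tends to $j\,\mathrm{scl}(b)$ as $n\to\infty$. Apart from this well-definedness check the argument is routine.
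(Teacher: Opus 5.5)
Your proof is correct and follows the paper's approach. The paper's entire justification is the observation that a homomorphism carries commutators to commutators, citing \cite[Lemma~2.4]{Cal_MSJ}; that observation is exactly your Step~1 bound $\mathrm{cl}_H(\varphi(a)^n)\le\mathrm{cl}_G(a^n)$. Your Step~2 and your check that (\ref{def2}) is independent of the exponent spell out the extension to elements outside $[G,G]$, which the paper leaves implicit.
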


The following inequality  may be well-known for experts, 
which is proved by noting that 
$(ab)^{2n}a^{-2n}b^{-2n}$ can be written as a product of $n$ commutators \cite[p.45]{Cal_MSJ}; 
see \cite[Lemma~4.6]{IMT_realization} for an alternate proof. 

\begin{claim}
\label{scl_product}
Let $G$ be a group. 
Assume that $a, b$ \(and hence $ab$\) belong to $[G, G]$ or 
the abelianization $G/[G, G]$ is a finite group\footnote{In practice, we apply this lemma for $G = \pi_1(K(s))$ with non-zero slope $s \in \mathbb{Q}$.}. 
Then we have 
\begin{equation}
\label{product}
\mathrm{scl}_G(ab) \ge \mathrm{scl}_G(a) -\mathrm{scl}_G(b) -\frac{1}{2}.
\end{equation}
\end{claim}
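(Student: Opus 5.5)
We must prove Claim~\ref{scl_product}: $\mathrm{scl}_G(ab) \ge \mathrm{scl}_G(a) - \mathrm{scl}_G(b) - \frac12$. The plan is to first prove the dual inequality
\[
\mathrm{scl}_G(xy) \le \mathrm{scl}_G(x) + \mathrm{scl}_G(y) + \tfrac12
\]
for $x,y$ satisfying the hypothesis. Then we apply it to $x = ab$ and $y = b^{-1}$, which gives $\mathrm{scl}_G(a) \le \mathrm{scl}_G(ab) + \mathrm{scl}_G(b^{-1}) + \frac12$. We will also use $\mathrm{scl}_G(b^{-1}) = \mathrm{scl}_G(b)$; this holds because $\mathrm{cl}(b^{-n}) = \mathrm{cl}(b^n)$, since the inverse of a product of $k$ commutators is again a product of $k$ commutators. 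Rearranging gives (\ref{product}). If some scl is infinite the inequality is trivial or meaningless; under either hypothesis every element involved has a power in $[G,G]$, so all quantities are finite.

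\textbf{Case $x,y\in[G,G]$.} The key identity is the one cited from \cite[p.45]{Cal_MSJ}: $(xy)^{2n} x^{-2n} y^{-2n}$ is a product of at most $n$ commutators. One way to see this is that in the free group on $x,y$ this element lies in the commutator subgroup and bounds a surface of genus about $n$. Since commutator length is subadditive and conjugation-invariant, this yields
\[
\mathrm{cl}((xy)^{2n}) \le \mathrm{cl}(x^{2n}) + \mathrm{cl}(y^{2n}) + n .
\]
Dividing by $2n$ and letting $n\to\infty$, using definition (\ref{def1}), gives $\mathrm{scl}(xy) \le \mathrm{scl}(x)+\mathrm{scl}(y)+\frac12$.

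\textbf{Case $G/[G,G]$ finite of order $d$.} Here $x^d$, $y^d$ and $(xy)^d$ all lie in $[G,G]$, but $x^d y^d \neq (xy)^d$ in general, so the previous case cannot be applied to $x^d, y^d$ directly. Instead we rerun the same count with $n$ chosen to be a multiple of $d$. Then $(xy)^{2n}$, $x^{2n}$ and $y^{2n}$ each lie in $[G,G]$, and the identity expressing $(xy)^{2n}x^{-2n}y^{-2n}$ as $n$ commutators holds in any group. We therefore obtain the same inequality on commutator lengths along this subsequence of $n$. Passing to the limit along the subsequence, and using (\ref{def2}) together with Claim~\ref{scl_a^k} to identify the limits with $\mathrm{scl}$, finishes this case.

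\textbf{Main obstacle.} The main difficulty is justifying the identity that $(xy)^{2n}x^{-2n}y^{-2n}$ is a product of $n$ commutators. I would prove it by induction on $n$, which rests on the identity that $(xy)^2 = x^2 y^2 \cdot$(a commutator conjugated suitably). The inductive step must then manage how $x^{2k}$ and $y^{2k}$ are rearranged past $(xy)^2$, and each such rearrangement must add only one commutator. This is the step that needs checking. Alternatively, we may invoke \cite[Lemma~4.6]{IMT_realization} as the paper suggests.
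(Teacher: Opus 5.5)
Your proposal is correct and follows the paper's route. Both arguments obtain $\mathrm{scl}_G(xy)\le\mathrm{scl}_G(x)+\mathrm{scl}_G(y)+\frac12$ from the cited fact that $(xy)^{2n}x^{-2n}y^{-2n}$ is a product of $n$ commutators, and then substitute $x=ab$, $y=b^{-1}$; your subsequence argument over $n\in d\mathbb{Z}$ correctly fills in the finite-abelianization case, which the paper leaves implicit.

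One caution about the induction you sketch. The straightforward version is $c_{k+1}=c_1\cdot(y^2x^2)c_k(y^2x^2)^{-1}\cdot[y^2x^2,y^{2k}]$ with $c_k=(xy)^{2k}x^{-2k}y^{-2k}$. It costs two commutators per step, so it only gives the constant $1$. For the constant $\frac12$ you really do need the cited identity, or Bavard duality via $\phi(ab)\le\phi(a)+\phi(b)+D(\phi)$.
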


\medskip

Since $g$ is non-peripheral, 
Thurston's hyperbolic Dehn surgery theorem \cite{T1,T2} and Calegari's length inequality \cite{Cal_GAFA} give us a constant $\delta_g > 0$ (depending only on $g$) so that for all hyperbolic surgery slopes $s$, 
\begin{equation}
\label{delta}
\mathrm{scl}_{\pi_1(K(s))}(p_s(g)) > \delta_g > 0\ \textrm{whenever}\ p_s(g) \ne 1,\ \textrm{i.e.}\  s \not\in \mathcal{S}(g).
\end{equation}
See \cite[Theorem~4.8]{IMT_realization} for details. 
Furthermore, 
since $h \in [G(K), G(K)]$, 
its stable commutator length satisfies $\mathrm{scl}_{G(K)}(h) < \infty$.  

Thus we set a constant $C$ as 
\[
C = \frac{\mathrm{scl}_{G(K)}(h)+\frac{1}{2}}{\delta_g} > 0. 
\]

Then for any integer $m \ge C$, 
we have 
\begin{equation}
\label{m_delta}
m\, \delta_g \ge \mathrm{scl}_{G(K)}(h)+\frac{1}{2}. 
\end{equation}

Assume that $s \ne 0$. 
Then $G = \pi_1(K(s))$ satisfies that $G/[G, G]$ is finite. 
Take an integer $m$ so that $m \ge C$.  
Then  we have: 
\begin{align*}
\mathrm{scl}_{\pi_1(K(s))}(p_s(g^{m}h)) &\ge m\, \mathrm{scl}_{\pi_1(K(s))}(p_s(g)) - \mathrm{scl}_{\pi_1(K(s))}(p_s(h))\,-\,\frac{1}{2}\quad {\small(\ref{product}\ \textrm{and}\  \ref{power})}\\
& > m\, \delta_{g} - \mathrm{scl}_{G(K)}(h) - \frac{1}{2}\quad {\small(\ref{delta},\ \ref{mono})}\\
&  \ge 0 \quad {\small(\ref{m_delta})}. 
\end{align*}
Hence, $\mathrm{scl}_{\pi_1(K(s))}(p_s(g^{m}h)) > 0$ and $p_s(g^{m}h)\ne 1$. 

\medskip

Let us suppose that $s= 0$ and $p_0(g) \not\in [\pi_1(K(0)), \pi_1(K(0))]$. 
(This is the only case we cannot apply Lemma~\ref{scl_product}.)
Then we have: 

\begin{claim}
\label{s=0}
$p_0(g^mh) \ne 1$ for all $m > 0$. 
\end{claim}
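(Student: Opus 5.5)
The natural tool is the abelianization of $\pi_1(K(0))$. Since $0$--surgery on a knot has $H_1(K(0);\mathbb{Z}) \cong \mathbb{Z}$, generated by the image of $[\mu]$, we can write $q \colon \pi_1(K(0)) \to H_1(K(0)) \cong \mathbb{Z}$ for the abelianization and look at the composite $q \circ p_0$. The key observation is that $q\circ p_0$ factors through $p_{\mathrm{abel}}$. Indeed, the filling slope $0$ is the longitude $\lambda$, which already lies in $[G(K),G(K)]$. So the map $H_1(E(K)) \to H_1(K(0))$ induced by the filling is an isomorphism $\mathbb{Z} \to \mathbb{Z}$.

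The plan is to compute $q(p_0(g^m h))$ directly. Because $h \in [G(K),G(K)]$, we have $q(p_0(h)) = 0$. Hence
\[
q(p_0(g^m h)) = m\, q(p_0(g)) + q(p_0(h)) = m\, q(p_0(g)).
\]
The hypothesis $p_0(g) \notin [\pi_1(K(0)),\pi_1(K(0))]$ means exactly that $q(p_0(g)) \ne 0$ in $\mathbb{Z}$. Since $\mathbb{Z}$ is torsion-free, $m\, q(p_0(g)) \ne 0$ for every $m > 0$. Therefore $p_0(g^m h)$ has nontrivial image in $H_1(K(0))$, and in particular $p_0(g^m h) \ne 1$. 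This proves the claim for all $m>0$, with no dependence on the constants $N_1$, $N_2$.

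There is no real obstacle here. The only point to state carefully is the identification $H_1(K(0)) \cong \mathbb{Z}$, so that "not in the commutator subgroup" becomes "nonzero, hence of infinite order, in the abelianization". It is also worth noting why this case was separated out: Claim~\ref{scl_product} does not apply when $s=0$ and $p_0(g)$ lies outside the commutator subgroup, because then $\mathrm{scl}(p_0(g)) = \infty$ and the abelianization is infinite. The homological argument above bypasses the scl estimates entirely in this case.
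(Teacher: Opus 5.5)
Your proof is correct and is essentially the paper's argument. Both abelianize $\pi_1(K(0))$ to $H_1(K(0))\cong\mathbb{Z}$, use $h\in[G(K),G(K)]$ to kill $[p_0(h)]$, and conclude from $[p_0(g)]\neq 0$ that $m[p_0(g)]\neq 0$ for every $m>0$; the paper just phrases this as a contradiction yielding $m=0$.
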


\begin{proof}
Suppose that $p_0(g^mh) = 1$, i.e. $p_0(g)^m = p_0(h)^{-1}$. 
Abelianizing this we have $m [p_0(g)] = - [p_0(h)] \in H_1(K(0)) = \mathbb{Z}$. 
Since $h \in [G(K), G(K)]$, 
$p_0(h) \in [\pi_1(K(0)), \pi_1(K(0))]$ and 
we have $[p_0(h)] = 0 \in H_1(K(0))$. 
On the other hand, 
since $p_0(g) \not\in [\pi_1(K(0)), \pi_1(K(0))]$, 
$[p_0(g)] \ne 0$. 
Hence, $m = 0$.  
\end{proof}

Put $N_2 = C > 0$. 
Then the above discussion shows that $p_s(g^{m}h) \ne 1$ for all $m \ge N_2$, i.e. 
$s \not\in \mathcal{S}(g^mh)$ for all integers $m \ge N_2$. 

Set $N_3 = \mathrm{max}\{ N_1, N_2 \}$, 
which is the desired constant in Case 2. 

\medskip

Combining Cases 1 and 2, 
we have $\mathcal{S}(g^n h) = \mathcal{S}(g) \cap \mathcal{S}(h)$ for all $n \ge N_3$. 

\medskip

To finish the proof of Lemma~\ref{S_K_intersection_no_torsion}, 
we show that we may take $g^n h$ to be non-peripheral.  

\begin{lemma}
\label{non-peripheral_constant}
Let $K$ be a hyperbolic knot. 
For any non-peripheral element $\alpha$ and any element $\beta$ of $G(K)$, 
there exists a constant $C$ \(depending on $\alpha$ and $\beta$\) such that $\alpha^n \beta$ is non-peripheral whenever $n \ge C$. 
\end{lemma}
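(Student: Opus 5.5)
We use the holonomy representation $\rho\colon G(K)\to PSL_2(\mathbb{C})$, as in Lemma~\ref{non-conjugate}. An element $g\neq 1$ is peripheral if and only if $\rho(g)$ is parabolic, because $\rho$ is discrete and faithful and the parabolic elements of $\rho(G(K))$ are exactly the conjugates of nontrivial elements of the cusp group. Since $\alpha$ is non-peripheral, $\rho(\alpha)$ is loxodromic; there are no elliptics, since $G(K)$ is torsion-free. The claim therefore reduces to showing that $\rho(\alpha^n\beta)$ is neither parabolic nor the identity for all large $n$. Equivalently, $|\mathrm{tr}\,\rho(\alpha^n\beta)|\neq 2$ suffices to rule out both, because $\mathrm{tr}=\pm 2$ covers both cases.

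Conjugate so that
\[
\rho(\alpha)=\begin{pmatrix}\lambda&0\\ 0&\lambda^{-1}\end{pmatrix},\quad |\lambda|>1,
\qquad
\rho(\beta)=\begin{pmatrix}a&b\\ c&d\end{pmatrix}.
\]
Then
\[
\mathrm{tr}\,\rho(\alpha^n\beta)=\lambda^n a+\lambda^{-n}d .
\]
If $a\neq 0$, then $|\lambda^n a+\lambda^{-n}d|\ge |\lambda|^n|a|-|d|\to\infty$. Hence there is $C$ with $|\mathrm{tr}|>2$ for all $n\ge C$, so $\alpha^n\beta$ is nontrivial and non-peripheral.

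The main obstacle is the degenerate case $a=0$. Here $\mathrm{tr}=\lambda^{-n}d\to 0$, so the trace is eventually of modulus less than $2$. A trace in $(-2,2)$ would be elliptic, while a non-real trace is loxodromic. Elliptics cannot occur in the torsion-free discrete group $\rho(G(K))$, so the trace, if real, must be $\pm2$ or of modulus greater than $2$. Since $|\mathrm{tr}|<2$ eventually, for large $n$ the trace is non-real or non-$\pm2$, and $\rho(\alpha^n\beta)$ is loxodromic. Concretely, if $d\neq0$ then $\lambda^{-n}d\neq \pm 2$ once $|\lambda|^{-n}|d|<2$. If $d=0$, the trace is $0$, which would make $\rho(\alpha^n\beta)$ elliptic of order $2$; this is impossible since $G(K)$ is torsion-free, so the case $a=d=0$ cannot occur. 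Either way, for $n\ge C$ (with $C$ chosen from $|\lambda|,|a|,|d|$) the trace is not $\pm 2$, so $\alpha^n\beta$ is non-peripheral.

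It remains to justify the equivalence between being peripheral and having parabolic image. This is the standard fact for a finite-volume hyperbolic knot complement, already used implicitly in the proof of Lemma~\ref{non-conjugate} and Claim~\ref{peripheral_subgroup_element}: maximal parabolic subgroups are the conjugates of $P(K)$.
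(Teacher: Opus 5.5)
Your proof is correct and takes essentially the same route as the paper: diagonalize $\rho(\alpha)$ under the holonomy representation, compute $\mathrm{tr}\,\rho(\alpha^n\beta)=\lambda^n a+\lambda^{-n}d$, and show that its modulus eventually stays away from $2$, tending to $\infty$ when $a\neq 0$ and to $0$ when $a=0$. The only cosmetic difference is in the case $a=d=0$: you rule it out by torsion-freeness, whereas the paper simply observes that the trace is then $0\neq\pm 2$.
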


\begin{proof}
Consider a holonomy representation $\rho \colon G(K) \to  PSL_2(\mathbb{C})$ such that 
$\rho(\alpha) = 
\begin{pmatrix}
	a & 0\\[2pt]
	0 & a^{-1}
\end{pmatrix}$
for some $1 \ne a \in \mathbb{C}$. 
Since $\alpha$ is not a torsion element, 
$\rho(\alpha)$ is not elliptic, and hence $|| a || \ne 1$; we may assume $|| a || > 1$. 
Write  
$\rho(\beta) = 
\begin{pmatrix}
	x & y\\[2pt]
	z & u
\end{pmatrix}$.  
Then the trace  $\mathrm{Tr}\rho(\alpha^m \beta) = \mathrm{Tr}\rho(\alpha)^m \rho(\beta)$ is  $a^m x + a^{-m} u$. 

Let us divide into two cases depending upon whether $(x, u) = (0, 0)$ or not. 
If $(x, u) = (0, 0)$, then $\mathrm{Tr}\rho(\alpha^m \beta) = 0 \ne \pm 2$. 
Hence $\alpha^m \beta$ is not parabolic, and thus it is non-peripheral. 

Assume that $(x, u) \ne (0, 0)$. 
Since $||a||^m  |x|  - ||a^{-1}||^m  |u| \le || a^m x + a^{-m} u || \le ||a||^m  |x|  + ||a^{-1}||^m  |u| $ and 
$|| a ||^m \to \infty$ and $|| a^{-1} ||^m \to 0$ when $m \to \infty$, 
we have a constant $C > 0$ such that 
$2 < || a^m x + a^{-m} u ||$ if $x \ne 0$ and $|| a^m x + a^{-m} u || < 2$ if $x = 0$ ($u \ne 0$) 
provided $m \ge C$. 
This means that $\alpha^{m} \beta$ is non-peripheral if $m \ge C$. 
\end{proof}

Finally set $N = \mathrm{max}\{ N_3, C \}$. 
Then
\[
\mathcal{S}(g) \cap \mathcal{S}(h) = \mathcal{S}(g^m h) 
\] 
for all integers $m \ge N$ and $g^m h$ is non-peripheral. 

This completes a proof of Lemma~\ref{S_K_intersection_no_torsion}. 
\end{proof}

\medskip

\section{Characterization of subgroups containing persistent elements}

In this section we prove Theorems~\ref{characterization}, 
which gives a complete characterization of subgroups which contain persistent elements. 

\begin{thm_characterization}
Let $K$ be a torsion-free hyperbolic knot, 
and $H$ a subgroup of $G(K)$. 
Then the following three conditions are equivalent. 

\begin{enumerate}
\item 
$H$ contains a persistent element. 
\item
$H$ is not contained in $\langle\!\langle r \rangle\!\rangle$ for any $r \in \mathbb{Q}$.  
\item 
$\mathcal{S}_{\cap}(H) = \emptyset$.
\end{enumerate}
\end{thm_characterization}

\begin{proof}
\noindent
$(1) \Rightarrow (2)$.\ 
Assume that $H \subset \langle\!\langle r \rangle\!\rangle$ for some $r \in \mathbb{Q}$. 
Then obviously every element in $H$ becomes trivial for $r$--Dehn filling, and cannot be persistent. 

\medskip

\noindent
$(2) \Rightarrow (3)$.\  
Assume that $r \in \mathcal{S}_{\cap}(H)$. 
Then, by definition, for every $h \in H$ we have $r \in \mathcal{S}(h)$, 
i.e. $p_r(h) = 1$, and hence $h \in \langle\!\langle r \rangle\!\rangle$. 
This shows that $H \subset \langle\!\langle r \rangle\!\rangle$. 

\medskip

\noindent
$(3) \Rightarrow (1)$.\ 
If $\displaystyle
\mathcal{S}_{\cap}(H) = \emptyset$, 
then by Theorem~\ref{minimizer}, 
we have an element $h_0 \in H$ with 
$ \mathcal{S}(h_0) =  \displaystyle\mathcal{S}_{\cap}(H) =\emptyset$, 
which means that $h_0$ is a persistent element. 
\end{proof}

\medskip

\begin{remark}
Assume that $H$ is not conjugate into $P(K) = i_*(\partial E(K))$.  
Then by the proof of Theorem~\ref{minimizer}, $H$ contains a non-peripheral, persistent element $h_0$.  
Furthermore, for any $x \in H$, $h_x = h_0^{n-m} x h_0^m x^{-1} \in H$ is persistent. 
Let us observe that  $h_x = h_y$ if and only if $x = y$. 
Suppose that 
$h_0^{n-m} x h_0^m x^{-1} = h_0^{n-m}  y h_0^m y^{-1}$ for some distinct elements $x, y \in H$. 
This implies $x h_0^m x^{-1} =  y h_0^m y^{-1}$, i.e. 
$y^{-1} x h_0^m  =  h_0^m y^{-1}x$. 
This shows $h_0^m$ and $y^{-1} x$ commute. 
Since $K$ is hyperbolic, 
$h_0^m$, and hence $h_0$ is peripheral, contradicting the assumption. 
Thus varying $x \in H$, we have infinitely many persistent element in $H$ by Theorem~\ref{persistent_BS}. 
\end{remark}

\medskip

\section{Complements of subgroups and persistent elements}

Theorem~\ref{characterization} shows a necessary condition for a subgroup to have persistent elements is also a sufficient condition. 
In this section, let us turn to find persistent elements in the complement of a given proper subgroup. 

\begin{thm_complement}
Let $K$ be a torsion-free hyperbolic knot. 
Then for any proper subgroup $N$ of $G(K)$, 
there exists a persistent element $g$ in $G(K) - N$.
\end{thm_complement}

We first observe the following. 

\begin{lemma}
\label{ubiquitous_non-peripheral}
Let $K$ be a hyperbolic knot. 
For any proper subgroup $N \subset G(K)$, 
there exists a non-peripheral element $y$ in $G(K) - N$. 
\end{lemma}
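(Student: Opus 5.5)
We argue by contradiction: assume every element of $G(K)-N$ is peripheral, and deduce $N = G(K)$. Since $N$ is proper, fix some $x \in G(K) - N$; by assumption $x$ is peripheral.

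The key observation is a coset-translation trick. If $n \in N$, then $n x \notin N$ and $n^{-1} x \notin N$, because $N$ is a subgroup. Hence every element of the cosets $Nx$ and $xN$ is peripheral. We then split into two cases according to whether $N$ contains a non-peripheral element.

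\textbf{Case (a): $N$ contains a non-peripheral element $\alpha$.} Apply Lemma~\ref{non-peripheral_constant} with this $\alpha$ and $\beta = x$. It gives a constant $C$ such that $\alpha^n x$ is non-peripheral for all $n \ge C$. But $\alpha^n x \in Nx \subset G(K)-N$, and every element of $G(K)-N$ is peripheral by assumption. This is a contradiction. Note that Lemma~\ref{non-peripheral_constant} was proved for an arbitrary hyperbolic knot, so nothing beyond hyperbolicity is needed here.

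\textbf{Case (b): every element of $N$ is peripheral.} Together with the standing assumption, every element of $G(K)$ is then peripheral. This is impossible for a hyperbolic knot. Since $G(K)$ is non-abelian, we can choose $u, v$ with $[u,v] \ne 1$.
\begin{itemize}
\item If $u$ is non-peripheral, we are already done.
\item Otherwise $u$ is peripheral and does not commute with $v$. Lemma~\ref{non-conjugate}, applied with $X = E(K)$, shows that $u^{1-m} v u^m v^{-1}$ is non-peripheral for all but at most one integer $m > 1$.
\end{itemize}
Either way we obtain a non-peripheral element of $G(K)$, which is a contradiction. This is exactly the argument of Claim~\ref{peripheral_subgroup_element} with $H = G(K)$.

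Alternatively, one can avoid the contradiction and construct $y$ directly. Take any non-peripheral $\alpha \in G(K)$, which exists by Case (b), and any $x \notin N$.
\begin{itemize}
\item If $\alpha \in N$, then $\alpha^n x \notin N$ for every $n$, and it is non-peripheral for $n \ge C$ by Lemma~\ref{non-peripheral_constant}.
\item If $\alpha \notin N$, then $y = \alpha$ already works.
\end{itemize}

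I expect no serious obstacle. The only delicate point is ensuring that Lemma~\ref{non-peripheral_constant} is applied with $\alpha$ non-peripheral, rather than with $x$, which may be peripheral; this is why we multiply $x$ by powers of an element of $N$, not powers of $x$.
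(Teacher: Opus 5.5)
Your proof is correct and follows essentially the same route as the paper. You split according to whether $N$ contains a non-peripheral element $\alpha$; if it does, Lemma~\ref{non-peripheral_constant} makes $\alpha^n x$ non-peripheral for large $n$, and $x \notin N$ forces $\alpha^n x \notin N$. Your Case (b) also supplies the justification, left implicit in the paper, that $G(K)$ actually contains a non-peripheral element when $K$ is hyperbolic.
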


\begin{proof}
If $N$ consists of peripheral elements, 
then any non-peripheral element does not belong to $N$, and $G(K) - N$ has a non-peripheral element. 
Assume that $N$ contains a non-peripheral element. 
Take a non-peripheral element $\alpha \in N$ and a nontrivial element $x \not\in N$. 
Then Lemma~\ref{non-peripheral_constant} shows that 
$\alpha^n x$ is a non-peripheral element for infinitely many integers $n$. 
Let us take a non-peripheral element $g = \alpha^n x$. 
Now we see that it belongs to $G(K) - N$. 
If $g = \alpha^n x \in N$, 
then, since $\alpha^n \in N$, $x = \alpha^{-n} g \in N$, contradicting the choice of $x$. 
\end{proof}

In Lemma~\ref{S_K_intersection_no_torsion}, 
we take a persistent element $h \in [G(K),G(K)]$ (i.e.  $\mathcal{S}(h) = \emptyset$) so that 
$\mathcal{S}(g^m h) = \mathcal{S}(g) \cap \mathcal{S}(h) = \emptyset$. 
Then we have: 

\begin{lemma}
\label{g^mh}
Let $K$ be a torsion-free hyperbolic knot. 
For any persistent element $h \in [G(K), G(K)]$ and any non-peripheral element $g \in G(K)$, 
there is a constant $C > 0$ such that $g^m h$ is a persistent element for $m \ge C$. 
\end{lemma}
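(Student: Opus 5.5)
This is a direct specialization of the Shrinking Lemma (Lemma~\ref{S_K_intersection_no_torsion}). Its hypotheses are exactly what is given here. $K$ is a torsion-free hyperbolic knot, $g$ is non-peripheral, and $h$ lies in $[G(K),G(K)]$. Moreover $h$ is nontrivial, since persistent elements are nontrivial by definition. Applying the lemma to the pair $(g,h)$ yields a constant $N>0$ such that, for all integers $m\ge N$, the element $g^m h$ is non-peripheral and
\[
\mathcal{S}(g^m h)=\mathcal{S}(g)\cap\mathcal{S}(h).
\]
We then take $C=N$.

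Next we use persistence of $h$, which means $\mathcal{S}(h)=\emptyset$. Hence $\mathcal{S}(g^mh)=\mathcal{S}(g)\cap\emptyset=\emptyset$ for every $m\ge C$. So $g^mh$ survives every nontrivial Dehn filling.

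It remains to check that $g^mh$ is itself a nontrivial element, as required in the definition of a persistent element. The Shrinking Lemma tells us that $g^mh$ is non-peripheral, and the identity element is trivially peripheral, so $g^mh\neq 1$. Alternatively, $g^mh=1$ would force $\mathcal{S}(g^mh)=\mathbb{Q}\neq\emptyset$. Either way, $g^mh$ is persistent for all $m\ge C$.

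There is no real obstacle here; all the analytic work is contained in the Shrinking Lemma. That work is the uniform scl lower bound $\delta_g$ over hyperbolic fillings, together with the torsion-freeness argument for the finitely many non-hyperbolic slopes and the separate treatment of the slope $0$. The only point requiring care is to confirm that the hypothesis $h\in[G(K),G(K)]$ is genuinely present. It is, and it is exactly what makes Claim~\ref{scl_product} and Claim~\ref{s=0} applicable inside that proof.
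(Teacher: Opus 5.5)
Your proposal is correct and matches the paper's argument: the paper obtains this lemma by specializing the Shrinking Lemma to a persistent $h\in[G(K),G(K)]$, so that $\mathcal{S}(g^m h)=\mathcal{S}(g)\cap\mathcal{S}(h)=\emptyset$ for all sufficiently large $m$. Your check that $g^m h\neq 1$ is a small detail the paper leaves implicit; it follows either from non-peripherality or from $\mathcal{S}(1)=\mathbb{Q}$.
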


\begin{proof}[Proof of Theorem~\ref{complement}]
Following Example~\ref{Seifert_surface} we may take a persistent element $h$ in the fundamental group of an incompressible Seifert surface. 
In particular, $h \in [G(K), G(K)]$. 
If $h \not\in N$, then $h$ is a desired persistent element. 
Assume that $h$ belong to $N$. 
Then take a non-peripheral element $g \not\in N$ (Lemma~\ref{ubiquitous_non-peripheral}) and 
apply Lemma~\ref{g^mh} to see that $g^m h$ is a persistent element whenever $m > C$ for some constant $C$.
Then at least one of $g^m h$ or $g^{m+1} h$ is a persistent element which does not belong to $N$. 
If both $g^m h$ and $g^{m+1}$ belong to $N$,  
then we get $g = (g^{m+1} h) (g^m h)^{-1} \in N$. 
This contradicts our choice of $g$.  
\end{proof}

\medskip

\section{Pseudo-meridians and persistent elements}
\label{p-m}

In this section, we investigate a relationship between persistent elements and pseudo-meridians. 
Since any pseudo-meridian normally generates the knot group $G(K)$, 
it also generates $H_1(E(K))$. 
Thus, if a persistent element is homologically trivial, 
it cannot be a pseudo-meridian. 
What can we say about the converse? 
By definition, any automorphic image of a pseudo-meridian is a pseudo-meridian. 
In contrast, the proposition below establishes the existence of a persistent element equivalent to a non-persistent element.

\begin{proposition}
\label{persistent_auto}
Let $K=K_1\#K_2 $ be a non-prime knot with a negative amphicheiral knot $K_1$. 
Then there exists a persistent element which is equivalent to a non-persistent element. 
\end{proposition}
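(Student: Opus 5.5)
Write $E(K)=X\cup E(K_1)\cup E(K_2)$, where $X$ is the composing space ($[\text{pair of pants}]\times S^1$), and let $\mu$ denote the common meridian. Let $(\mu_1,\lambda_1)$ be a preferred meridian--longitude pair of $K_1$ and $(\mu_2,\lambda_2)$ one of $K_2$, so that $\mu_1=\mu_2=\mu$ in $G(K)$ and $\lambda=\lambda_1\lambda_2$ is the longitude of $K$. By Case~4 of the proof of Theorem~\ref{existence_persistent_elements}, every nontrivial element of $G(K_i)\subset G(K)$ is persistent, because $\partial E(K_i)$ stays incompressible in every $K(r)$. So I need an automorphism of $G(K)$ that moves some element of $G(K_1)$ out of the factor subgroups to an element killed by some nontrivial filling.

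The key construction uses negative amphicheirality. There is an orientation-reversing homeomorphism $f_1$ of $E(K_1)$ with $f_1(\mu_1)=\mu_1^{-1}$ and $f_1(\lambda_1)=\lambda_1$, up to conjugation in $G(K_1)$; orientation reversal on $\partial E(K_1)$ with $\mu_1\mapsto\mu_1^{-1}$ forces $\lambda_1\mapsto\lambda_1$. This yields an automorphism $\varphi_1$ of $G(K_1)$ with $\varphi_1(\mu)=\mu^{-1}$ and $\varphi_1(\lambda_1)=\lambda_1$.

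I then define $\Phi$ on $G(K)=G(K_1)*_{\langle\mu\rangle}G(K_2)$ by $\Phi|_{G(K_1)}=\varphi_1$ and $\Phi|_{G(K_2)}=\iota$, where $\iota$ is a map on $G(K_2)$ with $\iota(\mu)=\mu^{-1}$. I cannot simply take $\iota$ to be an automorphism of $G(K_2)$ inverting $\mu$, since $K_2$ need not be invertible. The fallback is a map $G(K_2)\to G(K)$ that conjugates $G(K_2)$ by an element sending $\mu$ to $\mu^{-1}$, but no such element exists in torsion-free groups. Better is to replace the amalgam picture and \emph{twist}: since $\varphi_1$ fixes $\lambda_1$, which commutes with $\mu$, I could instead use the automorphism that is the identity on $G(K_2)$ and conjugation by $\lambda_1^k$ on $G(K_1)$. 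This is a genuine automorphism (a Dehn twist along the swallow-follow torus), but it is inner-like and preserves $\mathcal{S}$. So the amphicheirality really must be used via the $\mu\mapsto\mu^{-1}$ flip. The cleanest version is probably to take $\Phi$ induced by a homotopy equivalence of $E(K)$ that is $f_1$ on $E(K_1)$, the identity on $E(K_2)$, and a homotopy equivalence of $X$ mixing the fiber orientation, as allowed by \cite[Theorem 14.6]{Jo}. Concretely, this is the presentation-level map $\mu\mapsto\mu$ on $G(K_2)$ and $\varphi_1$ composed with a correction on $G(K_1)$. I would verify, using the presentation $G(K)=\langle G(K_1),G(K_2)\mid \mu_1=\mu_2\rangle$, that the relation is respected once both factors send $\mu$ to the same image.

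For the non-persistent image, I take $g=\mu\lambda_1^{p}\in G(K_1)$, which is persistent by Case~4. Its image $\Phi(g)$ should be, for a suitable $p$, an element such as $\mu^{-1}\lambda_1^{p}$, or, after the mixing on $X$, an element of the form $\mu^{a}\lambda^{b}$ on $\partial E(K)$. For instance, $\lambda_1$ might be sent to $\lambda_1\lambda_2=\lambda$ up to the flip, making $\Phi(g)$ a peripheral slope element of $K$ with nonmeridional slope. Then $\Phi(g)\in\langle\!\langle r\rangle\!\rangle$ for that slope $r\neq\infty$, so $\mathcal{S}(\Phi(g))\ni r$ and $\Phi(g)$ is not persistent. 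This is consistent with Remark~\ref{peripheral_persistent}, which says peripheral persistent elements are meridian powers.

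The main obstacle is producing an honest automorphism $\Phi$ of $G(K)$ that carries a factor element to a nonmeridional peripheral element of $K$. This requires exploiting that $\varphi_1$ inverts $\mu$ while the $X$-part of the homotopy equivalence (a fiber-reversing self-homotopy equivalence of $X$ that is not homotopic to a homeomorphism of $E(K)$) swaps the roles of $\lambda_1$ and $\lambda$. I would first try to write $\Phi$ explicitly on generators and check the amalgamation relation, then compute $\Phi(\mu\lambda_1)$ directly.
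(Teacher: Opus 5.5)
There is a genuine gap, and it starts with a sign error. For a negative amphicheiral knot, the orientation-reversing homeomorphism of $S^3$ reverses the string orientation of $K_1$. On the peripheral torus it therefore gives $\lambda_1\mapsto\lambda_1^{-1}$. The linking number with $K_1$ changes sign twice (once for reversing $S^3$, once for reversing $K_1$), so it also gives $\mu_1\mapsto\mu_1$. The pair $\mu_1\mapsto\mu_1^{-1}$, $\lambda_1\mapsto\lambda_1$ that you use is positive amphicheirality. With the correct $\varphi_1$ (isotope the homeomorphism to fix a base point on $\partial E(K_1)$, so that $\varphi_1(\mu)=\mu$ and $\varphi_1(\lambda_1)=\lambda_1^{-1}$ exactly), the obstacle that occupies most of your proposal disappears. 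The map $\varphi=\varphi_1*\mathrm{id}$ respects the amalgamation $\mu_1=\mu_2$, and it is an automorphism of $G(K)=G(K_1)*_{\langle\mu\rangle}G(K_2)$ with inverse $\varphi_1^{-1}*\mathrm{id}$. As written, your proposal never produces an automorphism. The fiber-reversing homotopy equivalence of $X$ cannot supply one either: the fiber $\mu$ is central in $\pi_1(X)$, so reversing it forces $\mu\mapsto\mu^{-1}$ on $G(K_2)$ as well.

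The second, independent problem is your choice of element. Nothing in $G(K_1)$, such as $\mu\lambda_1^{p}$, can work. The automorphism $\varphi$ maps $G(K_1)$ to itself, so $\varphi(\mu\lambda_1^p)=\mu\lambda_1^{-p}$ is again persistent by Case~4 of the proof of Theorem~\ref{existence_persistent_elements}. Moreover, when $K_1$ and $K_2$ are prime, the Johannson argument of that Case~4 shows that every automorphism carries $G(K_1)$ onto a conjugate of $G(K_1)$ or $G(K_2)$, whose nontrivial elements are all persistent. So the automorphism sending $\lambda_1$ to $\lambda$ that you are hoping for does not exist.

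The element has to mix the two factors. Take $g=\lambda_1^{-1}\lambda_2\mu^p$; then $\varphi(g)=\lambda_1\lambda_2\mu^p=\lambda\mu^p$ is a slope element, and $p\in\mathcal{S}(\lambda\mu^p)$. Persistence of $g$ is not covered by Case~4 and needs its own argument, which is the real content of the paper's proof:
\begin{itemize}
\item $g\in\pi_1(X)$, and $\pi_1(X\cup_r(S^1\times D^2))$ injects into $\pi_1(K(r))$, because $X\cup_r(S^1\times D^2)$ is a boundary-irreducible Seifert fibered space over the annulus for every $r\in\mathbb{Q}$.
\item Write $\pi_1(X)=\langle d,e\rangle\times\langle t\rangle$ with $\lambda_1=d$, $\lambda_2=e$, $\mu=t$. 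The filling relation involves only $t$ and $c=de=\lambda$.
\item Killing $t$ and $c$ therefore gives an epimorphism $\pi_1(X\cup_r(S^1\times D^2))\to\langle d\rangle\cong\mathbb{Z}$, which sends $g$ to $d^{-2}\neq 1$.
\end{itemize}
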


\begin{proof}
Following \cite{Tsau2} (attributed to Simons), 
we begin by constructing an automorphism $\varphi \colon G(K) \rightarrow G(K)$ which is not induced by a homeomorphism of $E(K)$. 

Let $\mu$ and $\mu_i$ denote the meridians of $K$ and $K_i$, respectively. 
Similarly, let $\lambda$ and $\lambda_i$ denote the preferred longitudes of $K$ and $K_i$, respectively. 
Then the meridian satisfies $\mu = \mu_1=\mu_2$, and the longitude $\lambda$ can be written as $\lambda_1 \lambda_2$; 
see Figure~\ref{composing_persistent}. 

Since $K_1$ is a negative amphicheiral knot, 
$G(K_1)$ admits an automorphism $\varphi_1 \colon G(K_1) \rightarrow G(K_1)$ such that 
$\varphi(\lambda_1)=\lambda_1^{-1}, \varphi(\mu_1) = \mu_1$. 
Since $G(K) = G(K_1) \ast_{\mu_1 = \mu_2} G(K_{2})$, $\varphi_1 \ast id$ defines an automorphism 
$\varphi \colon G(K) \rightarrow G(K)$ such that $\varphi(\lambda) = \varphi(\lambda_1 \lambda_2)= \lambda_1^{-1} \lambda_2$, 
$\varphi(\mu)=\mu$. 

For every integer $p \in \mathbb{Z}$, 
we have 
\[
\varphi(\lambda_1^{-1}\lambda_2\mu^p) = (\lambda_1 \lambda_2) \mu^{p} = \lambda \mu^{p}, 
\] 
which is a slope element of $G(K)$, 
i.e. $\lambda_1^{-1}\lambda_2\mu^p$ is equivalent to the slope element $\lambda \mu^{p}$.  

By definition, 
$p \in \mathcal{S}(\lambda \mu^{p})$.
In what follows, we show that $\mathcal{S}(\lambda_1^{-1} \lambda_2\mu^p) = \emptyset$, 
meaning that $\lambda_1^{-1}\lambda_2\mu^p$ is persistent. 

Now let us consider a decomposition $E(K) = E(K_1) \cup E(K_2) \cup X$, 
where $X$ is the two-fold composing space $[\textrm{disk with two holes}]\, \times S^1$. 
Note that a regular fiber of $X$ is a meridian $\mu\, (=\mu_1 = \mu_2)$.

\begin{figure}[htb]
\centering
\includegraphics[width=0.5\textwidth]{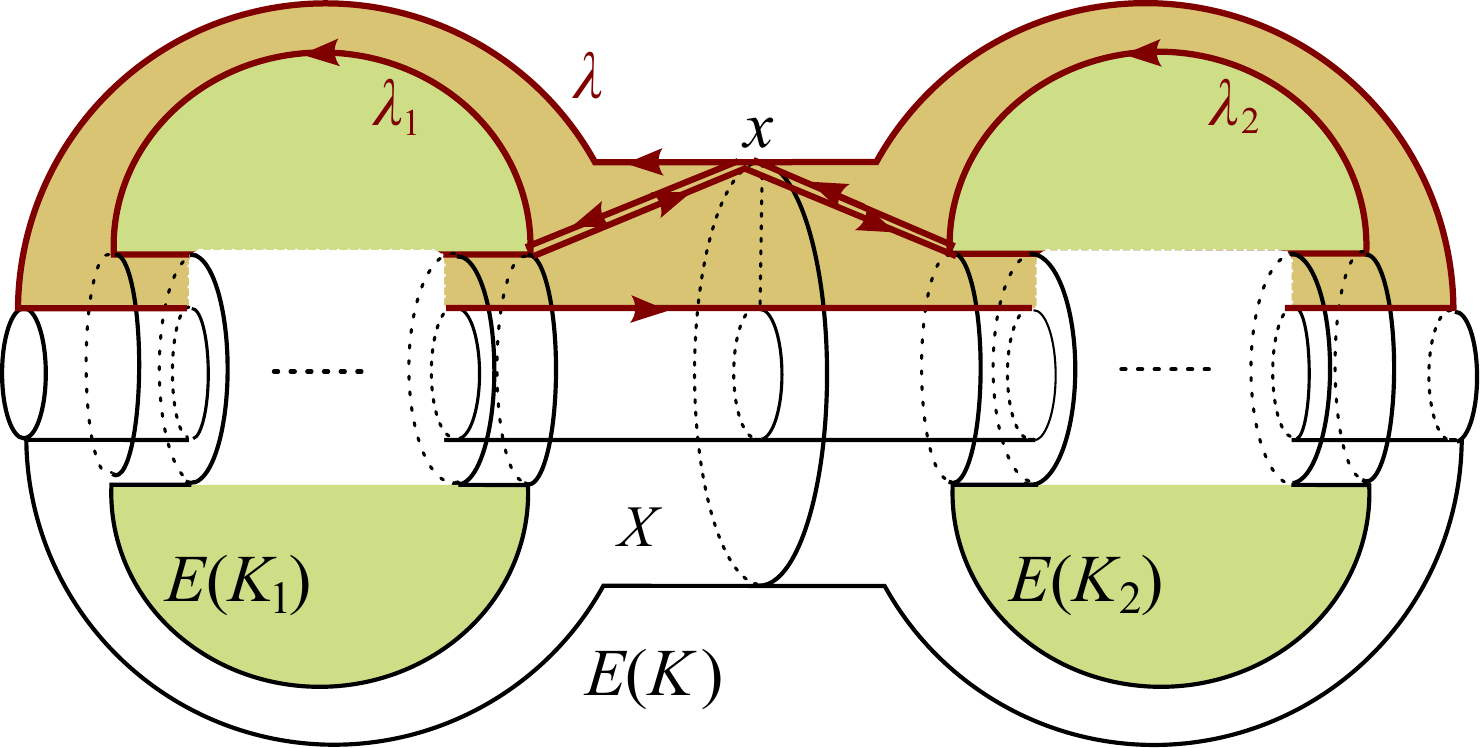}
\caption{Longitudes $\lambda_1, \lambda_2$, and $\lambda = \lambda_1 \lambda_2$}
\label{composing_persistent}
\end{figure}

Then $K(r)$ is expressed as $E(K_1) \cup E(K_2) \cup \left(X \cup_r (S^1 \times D^2)\right)$, 
and $X \cup_r (S^1 \times D^2)$ is a Seifert fiber space over the annulus with at most one exceptional fiber. 
In particular it is boundary-irreducible for all $r \in \mathbb{Q}$. 
Let us show that $\lambda_1^{-1}\lambda_2\mu^p$ survives in $\pi_1(X \cup_r (S^1 \times D^2))$ for all $r \in \mathbb{Q}$. 
Recall that $\pi_1(X \cup_r (S^1 \times D^2))$ has a presentation
\[
\langle c, d, t \mid [c, t] = [d, t] = 1, c^{\alpha} = t^{\beta} \rangle, 
\]
where $\mu = t,\ \lambda_1= d$ and $\lambda_2 = d^{-1}c$. 
Putting $t = c = 1$, we obtain 
$\langle d \mid \ \rangle$, 
in which $\lambda_1^{-1}\lambda_2\mu^p = d^{-2} \ne 1$. 
Thus $\lambda_1^{-1}\lambda_2 \mu^p$ is nontrivial in $\pi_1(X \cup_r (S^1 \times D^2))$ for all $r \in \mathbb{Q}$. 
Since $\pi_1(X \cup_r (S^1 \times D^2))$  injects into $\pi_1(K(r))$, 
we have $\mathcal{S}(\lambda_1^{-1}\lambda_2\mu^p) = \emptyset$. 
\end{proof}

\medskip

In the above discussion, 
choosing $p = \pm 1$, the persistent element $\lambda_1^{-1}\lambda_2\mu^p$ is a homological generator. 
Hence, even a persistent element representing a homological generator need not be a pseudo-meridian.

\medskip

For some hyperbolic knots, 
a persistent element that is a homological generator may not be a pseudo-meridian. 
Here we give such examples. 

\begin{example}
\label{persistent_not_pseudo-meridian}
Let $K$ be the figure-eight knot with
\[
G(K)=\langle t,a \mid ta^2t=ata^{-1}ta\rangle,
\]
where $t$ is a meridian and $a$ lies in the commutator subgroup.

Let $g=t^{-1}(a^2ta)t^2(a^2ta)^{-1}$.
Then $g$ represents a homological generator, and $a^2ta\ne 1$ in $G(K)$.
By Theorem \ref{persistent_BS}, $g$ is a persistent element.

However, $g$ is not a pseudo-meridian, since the quotient group $G(K)/ \langle \! \langle g \rangle \!\rangle$ has order $336$, which is confirmed by GAP \cite{GAP}. 
\end{example}

The knot in Example~\ref{persistent_not_pseudo-meridian} has the knot group generated by two elements in which 
one is a meridian. 
Such knots admit a pseudo-meridian. 

\medskip

\begin{example}
Let $K$ be a knot whose knot group is generated by a meridian $\mu$ and $y$. 
Then $g_m = \mu^{1-m} y \mu^m y^{-1}$ is a pseudo-meridian. 
\end{example}

\begin{proof}
In Case 3 of the  proof of Theorem~\ref{existence_persistent_elements}, 
we found that $g_m=\mu^{1-m}y\mu^m y^{-1}$ is persistent for a hyperbolic knot,
if $m>1$ and $[\mu, y]\ne 1$.
We remark that
this element $g_m$ can be a pseudo-meridian.
Suppose that $G(K)$ is generated by two meridians $\mu$ and $y$.
Since $G(K)$ is not abelian, $[\mu, y]\ne 1$.
Then the quotient group $G/\langle\!\langle g_m \rangle\!\rangle$
is perfect, so is trivial by Miller-Schupp \cite{MSc}.
\end{proof}

\section{Further questions}

This paper's main focus is to clarify the extent to which persistent elements are abundant.  
In this final section, we propose some questions.  

\subsection{Persistent subgroup}
Theorem~\ref{characterization} characterizes subgroups which contains a persistent element, 
and proves that expected subgroups always contain persistent elements. 
We say that a subgroup $H \subset G(K)$ is \textit{persistent} if its nontrivial elements are persistent. 
Building upon Theorem~\ref{existence_persistent_elements},  
we prove that the knot group $G(K)$ admits a cyclic persistent subgroup if and only if $K$ has no finite surgery; see \cite{IMT_cyclic}.  
So, for instance, any torus knot group do not admit persistent subgroups.  
Concerning non-cyclic persistent subgroups, 
we may prove that most satellite knots admits a persistent free group of rank two, and hence infinite rank. 
On the other hand, to the best of our knowledge, no such examples are known for hyperbolic knots. 

So we would like to ask: 

\begin{question}\footnote{We would like to thank Michel Boileau for directing our attention to persistent free group of rank two.}
\label{non-cyclic_persistent_subgroup}
Let $K$ be a hyperbolic knot. 
Does $G(K)$ admit a persistent free group of rank two? 
\end{question}

\bigskip

\subsection{Density of persistent elements}

To formulate that 
whether persistent element is common or not 
we introduce the following
\footnote{We would like to thank David Futer for suggesting this perspective.}.

Fix a finite generating set $X$ of a knot group $G(K)$ and 
we denote the word length of a nontrivial element $g \in G(K)$ by $\ell_X(g)$.
For $n \in \mathbb{Z}_{\geq 0}$, we denote by $B_X(n)$ the ball of radius $n$ centered at the identity with respect to the word length $\ell_X$.

For a subset $S$ of $G(K)$, the \emph{natural} (or  \emph{asymptotic}) \emph{density}  with respect to $X$ is defined as 

\[
\delta_{X}(S) = \limsup_{n \to \infty}\frac{ \#(S \cap B_X(n))}{ \# B_X(n)} .
\]

A subset $S$ is called \emph{generic} (resp. \emph{negligible}) with respect $X$ if $\delta_X(S)=1$ (resp. $\delta_X(S)=0$).

The value of natural density may depend on a choice of generating set $X$, and the behavior of the natural density is not `natural' as its name suggests. We refer to \cite{BurilloVentura} for several strange properties of natural density.

It is interesting problem to explore the natural density of the set of persistent elements.

\begin{question}
\label{prob}
Let $\mathcal{P}_K$ be the set of persistent elements in $G(K)$.
\begin{enumerate}
\renewcommand{\labelenumi}{(\roman{enumi})}
\item Is $\mathcal{P}_K$ a net ? i.e., there exists a constant $C$ and a finite generating set $X$ of $G(K)$ such that for every $g \in G(K)$, there exists $p \in \mathcal{P}_K$ with $\ell_X(g^{-1}p) \leq C$.
\item Is $\delta_X(\mathcal{P}_K)>0$ for a suitable finite generating set $X$?
\item Is $\delta_X(\mathcal{P}_K)=1$ for a suitable finite generating set $X$?
\end{enumerate}
\end{question}

We remark that if $\mathcal{P}_K$ is a net, then the property that $\mathcal{P}_K$ is non-negligible (i.e. $\delta_X(\mathcal{P}_K)>0$)  is independent of a choice of generating set $X$ by \cite[Proposition 2.1]{BurilloVentura}, so the answer of (ii) is independent of a choice of finite generating set $X$.

Note that $\mathcal{P}_K$ is written as the set 
$\{ g \in G(K) \mid |\mathcal{S}(g)| = 0 \}$. 
It follows from the Realization property mentioned in Subsection~\ref{subsection:background} that for any finite subset $\mathcal{R} \subset \mathbb{Q}$, we have an element $g \in G(K)$ such that $\mathcal{S}(g) = \mathcal{R}$ for a large class of hyperbolic knots $K$. 
Furthermore, there are infinitely many, mutually non-conjugate such elements; see \cite[Theorem~1.10]{IMT_realization}.  
Related to Question~\ref{prob}, it may be plausible to ask: 

\begin{question}
\label{prob_m}
\[
\lim_{m \to \infty}\delta_X(\{ g \in G(K) \mid |\mathcal{S}(g)| \ge m\}) = 0
\]
for any \(a suitable\) generating set $X$?
\end{question}

\bigskip

\section*{Acknowledgements}
TI has been partially supported by JSPS KAKENHI Grant Number 21H04428 and 23K03110.

KM has been partially supported by JSPS KAKENHI Grant Number 25K07018, 21H04428, 23K03110, 23K20791 and Joint Research Grant of Institute of Natural Sciences at Nihon University for 2025. 

MT has been partially supported by JSPS KAKENHI Grant Number JP25K07004.

\end{document}